\providecommand\@dotsep{5}\def\listtodoname{List of Todos}\def\listoftodos{\hypersetup{linkcolor=black}\@starttoc{tdo}\listtodoname\hypersetup{linkcolor=blue}}\makeatother
\newtheorem{lemma}{Lemma}
\newtheorem{proposition}{Proposition}
\newtheorem{theorem}{Theorem}
\newtheorem{definition}{Definition}
\newtheorem{corollary}{Corollary}
\newtheorem{remark}{Remark}
\def\C{\mathbb C}
\def\R{\mathbb R}
\def\N{\mathbb N}
\def\to{\rightarrow}
\def\p{\partial}
\DeclareMathOperator{\supp}{supp}
\newcommand{\pair}[1]{\left\langle #1 \right\rangle}
\newcommand{\norm}[1]{\left\|#1 \right\|}
\def\expec{\mathbb{E}}
\def\D{\mathcal D}
\def\bra{\langle}
\def\cet{\rangle}
\def\Src{\mathcal X}
\def\P{\mathbb P}
\def\W{\mathbb W}
\begin{document}
\title[Imaging with an white noise source]{Correlation based passive imaging with a white noise source}
\author[T. Helin, M. Lassas, L. Oksanen, T. Saksala]{}
\date{\today}
\maketitle
\centerline{\scshape Tapio Helin$^{\rm a}$, Matti Lassas$^{\rm a}$, Lauri Oksanen$^{\rm b}$ and Teemu Saksala$^{\rm a}$}
\medskip
{\footnotesize $^{\rm a}$Department of Mathematics and Statistics, University of Helsinki, Finland \\
\indent$^{\rm b}$Department of Mathematics, University College London, UK}
\email{teemu.saksala$@$helsinki.fi}

\selectlanguage{english}
\begin{abstract}
Passive imaging refers to problems where waves generated by unknown sources are recorded and used to image the medium through which they travel. 
The sources are typically modelled as a random variable
and it is assumed that some statistical information is available.
In this paper 
we study the stochastic wave equation $\p_t^2u - \Delta_g u = \chi W$, where $W$ is a random variable with the white noise statistics on $\R^{1+n}$, $n \ge 3$, 
$\chi$ is a smooth function vanishing for negative times and outside a compact set in space, and $\Delta_g$ is the Laplace--Beltrami operator
associated to a smooth non-trapping Riemannian metric tensor $g$ on $\R^n$. The metric tensor $g$ models the medium to be imaged, and 
we assume that it coincides with the Euclidean metric outside a compact set. 
We consider the empirical correlations on an open set $\mathcal{X} \subset \R^n$,
\begin{equation*}
	C_T(t_1, x_1, t_2, x_2)
	= \frac 1 T \int_0^T u(t_1+s,x_1) u(t_2+s,x_2)ds,	
\quad t_1,t_2>0,\ x_1,x_2\in \mathcal{X},
\end{equation*}
for $T>0$.
Supposing that $\chi$ is non-zero on $\mathcal{X}$ and constant in time after $t  > 1$,
we show that in the limit $T \to \infty$, the data $C_T$ becomes statistically stable, that is, independent of the realization of $W$. 
Our main result is that, with probability one, this limit determines the Riemannian manifold $(\R^n,g)$ up to an isometry.
To our knowledge, this is the first result showing that a medium can be determined in a passive imaging setting, without assuming a separation of scales.
\end{abstract}

\section{Introduction}

In passive imaging, waves generated by unknown sources are recorded and used to image the medium through which they travel.
Passiveness refers to the observer having only little or no control over the source (think earthquakes in seismic imaging). However, some statistical information of the source may be available and it can be useful to model the source as a random variable: while the statistics of the random variable is known, its realization remains unknown.

Passive imaging has had a fundamental impact to seismic and various other imaging modalities. We refer to the recent book by Garnier and Papanicolaou \cite{garnier2016passive} for an extensive review of the field. 
The previous mathematical theory is, to a large extent, based on
assuming some physical scaling regime. Such an approach has produced a number of important and efficient numerical methods. 
However, our key finding in the present paper is that exact recovery of an unknown medium is also possible without any scaling assumptions.
The proof of this is based on a reduction to a deterministic inverse problem.

In this work we consider the wave equation
\begin{align}
\label{eq:problem}
&\p_t^2 u(t,x) - \Delta_g u(t,x) = \chi(t,x) W(t,x) \quad \text{in $\R_+^{1+n}= (0,\infty) \times \R^n$},
\\
&u|_{t = 0} = \p_t u|_{t=0} = 0,\nonumber
\end{align}
where $\Delta_g$ is the Laplace--Beltrami operator corresponding 
to a smooth time-independent Riemannian metric $g$ on $\R^n$. In coordinates $(x_j)_{j=1}^n$ this operator has the following representation.
\begin{equation*}
	\Delta_g = \sum_{j,k=1}^n |g|^{-1/2} \frac{\partial}{\partial x^j}\left(|g|^{1/2} g^{jk} \frac{\partial}{\partial x^k} u\right),
\end{equation*}
where $[g_{jk}]_{j,k=1}^n=g(x)$, $|g| = {\rm det}(g_{jk})$ and $[g^{jk}]^n_{j,k=1} = g(x)^{-1}$. We assume that our source $W$ is a realization of a Gaussian white noise random variable on $\R^{1+n}$. 
Moreover, $\chi$ stands for a smooth function
\begin{equation*}
	\chi(t,x) = \chi_0(t) \kappa(x),
\end{equation*}
such that $\chi_0 \in C^\infty(\R)$ and
\begin{equation*}
	\chi_0(t) = \begin{cases}
	0, \quad t \leq 0, \\
	1, \quad t \geq 1,
	\end{cases}
\end{equation*}
and $\kappa \in C_0^\infty(\R^n)$.
%
We assume that there exists an open and non-empty set $\Src\subset \R^n$ where $\kappa$ is non-vanishing.
The source $\chi W$ can be modelled as a random variable taking values in a local Sobolev space with negative index, and the same is true for the solution $u$. 
Contrary to papers such as \cite{millet1999stochastic,ondrejat,conus2008non},
we do not consider $t \mapsto u(t, \cdot)$ as a random process.

The problem we study is the following: suppose we can record the empirical correlation
\begin{align}
\label{covar_data}
	C_T(t_1, x_1, t_2, x_2)
	= \frac 1 T \int_0^T u(t_1+s,x_1) u(t_2+s,x_2)ds,
\end{align}
for $t_1,t_2 > 0$, $x_1, x_2 \in \Src$ and $T > 0$. What information does this data yield regarding the metric $g$? For any finite $T$, the correlation $C_T$ is random in the sense that it depends on the realization of the source. A fundamental part of our result below is to show that this data becomes \emph{statistically stable}, i.e. independent of the realization, as $T$ increases. More precisely, we show that the limit 
$$
\lim_{T\to\infty} \pair{C_T, f \otimes h}_{\D'\times C_0^\infty(\R^{2+2n})},
\quad f,g \in C_0^\infty(\R^{1+n}),
$$ 
is deterministic, see Theorem \ref{th_inner_prods_by_ergodicity} below.
Thereafter, the paper is devoted to showing that this stability enables the recovery of $g$:

\begin{theorem}
\label{th_main}
Let $n\geq 3$. Suppose that $g$ is non-trapping and that $g$ coincides with the Euclidean metric outside a compact set. 
Let $u=\mathbb U(\omega)$ be the solution of (\ref{eq:problem}) 
where $W = \mathbb W(\omega)$ is a realization of the white noise $\mathbb W$ on $\R^{1+n}$.
Then with probability one, the empirical correlations \eqref{covar_data}
defined in the sense of generalized random variables in $\D'((\R \times \Src)^2)$ for $T > 0$, 
determine the Riemannian manifold $(\R^n,g)$ up to an isometry.
\end{theorem}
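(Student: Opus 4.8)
The plan is to reduce the statistical problem to a deterministic inverse problem for the wave equation, namely the boundary (or in this case interior) spectral/control problem that is known to determine a non-trapping Riemannian manifold up to isometry.

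First I would establish the statistical stability. By Theorem \ref{th_inner_prods_by_ergodicity} (stated in the excerpt), the empirical correlations $C_T$ converge, with probability one and tested against $C_0^\infty$, to a deterministic limit $C_\infty$. The first substantive step is to identify this limit explicitly. Writing the solution as $u = \Lambda(\chi W)$, where $\Lambda$ is the causal solution operator for $\p_t^2 - \Delta_g$, the white noise covariance $\expec[W(t,x) W(t',x')] = \delta(t-t')\delta(x-x')$ should yield, after the time average kills the non-stationary transient coming from $\chi_0$ being non-constant on $[0,1]$,
\begin{equation*}
	C_\infty(t_1,x_1,t_2,x_2) = \int_{\R^{1+n}} G(t_1 + s, x_1; \sigma, y)\, G(t_2 + s, x_2; \sigma, y)\, \kappa(y)^2 \, d\sigma\, dy,
\end{equation*}
where $G$ is the causal fundamental solution (integrating $s$ out of the formula). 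The key structural observation is that this is, up to the weight $\kappa^2$ and integration, a product of two wave kernels sharing a common source point, which is precisely the kind of object that appears in the Helmholtz--Kirchhoff / correlation identities underlying passive imaging; I would use finite speed of propagation and the non-trapping hypothesis to justify convergence of the $s$- and $\sigma$-integrals and to see that $C_\infty$ depends only on $g$, $\kappa$, not on $\omega$.

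Next I would extract from $C_\infty$ a deterministic measurement operator for $g$. The natural target is the source-to-solution map, or equivalently the operator $L^2$-kernel $(\sigma,y) \mapsto G(\cdot,\cdot;\sigma,y)\kappa(y)$ restricted to times and points in $\R_+ \times \Src$, \emph{composed with its own adjoint}: indeed $C_\infty$ has the form $A A^*$ where $A: L^2(\R^{1+n}) \to L^2((\R\times\Src))$ sends $F \mapsto \big(\Lambda(\kappa F)\big)|_{\R\times\Src}$. Since $A A^*$ determines the range of $A$ and the operator $A$ up to a partial isometry on the source side — and crucially determines $A^* A$'s spectral data on the receiver side — I would argue that $C_\infty$ determines the image of the wave propagator, and hence (by unique continuation for the wave equation, Tataru's theorem, together with the density of $\kappa$-weighted sources since $\kappa \ne 0$ on the open set $\Src$) the restricted Dirichlet-to-Neumann-type data, or directly the set of solutions to the wave equation observed on $\R_+\times\Src$. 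From that family of solutions one reconstructs, via the Boundary Control method adapted to an interior observation region (Belishev--Kurylev, Kurylev--Lassas--Oksanen), the travel-time / distance functions and thereby the manifold $(\R^n, g)$ up to isometry; the non-trapping and Euclidean-at-infinity assumptions ensure the relevant observability and that the "boundary" at infinity contributes nothing, so the finite region $\Src$ suffices.

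The main obstacle I anticipate is the passage from the symmetric object $C_\infty = A A^*$ back to enough of $A$ itself: a priori $AA^*$ only determines $A$ up to right multiplication by a unitary on the source Hilbert space, which could destroy the geometric information carried by the source point $y$. The resolution must exploit the specific structure — that $A$ is a \emph{causal} wave solution operator with a fixed finite speed — so that the Volterra/causality structure and the known singularities of $G$ (propagation of singularities along null geodesics) rigidify the factorization. Concretely, I expect one reads off from the singular support of $C_\infty$ the two-point function $(t_1,x_1,t_2,x_2) \mapsto$ "there is a source point $y$ with $d_g(x_1,y) = t_1 + s$ and $d_g(x_2,y)=t_2+s$ for some $s$", i.e. differences of distance functions $d_g(x_1,\cdot) - d_g(x_2,\cdot)$ evaluated over $y \in \supp\kappa$, and this is exactly the data handled by the broken/sharp geodesic and distance-function reconstruction results. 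Making the microlocal extraction rigorous in the low-regularity (white noise source) setting, and checking that the transient from $t\in[0,1]$ and the cutoff $\kappa$ do not obstruct it, is where the real work lies; everything downstream is then an appeal to the established deterministic reconstruction theory.
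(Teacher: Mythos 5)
Your high-level architecture matches the paper's: statistical stability (Theorem \ref{th_inner_prods_by_ergodicity}) reduces the problem to a deterministic one, and the endgame is a Boundary Control type reconstruction from a local source-to-solution map (Theorems \ref{th_response} and \ref{th:main_geom}). You also correctly identify the crux: the limiting correlation has the form $AA^*$, and one must break the gauge freedom $A \mapsto AU$ to recover geometric information. However, that is exactly the step you do not carry out. Your proposed resolution --- reading travel-time information off the singular support of $C_\infty$ via propagation of singularities --- is left entirely speculative (``where the real work lies''), and the data you say it would produce, namely differences $d_g(x_1,\cdot)-d_g(x_2,\cdot)$ sampled over $\supp\kappa$, is not the input to any of the distance-function reconstruction theorems you invoke, which require the full family $\{d_g(x,\cdot)|_{\Src} : x \in N\}$. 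As written, the middle of the argument is a genuine gap, not a proof.

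The paper closes this gap with an exact, non-microlocal device that your proposal misses: since the observation set $\Src$ is open and $\kappa \neq 0$ there, for any $\phi \in C_0^\infty$ supported in a small cylinder inside $(0,\infty)\times\Src$ the source $h = \Box_g(\kappa^{-2}\phi)$ produces the explicitly known, $\Src$-supported solution $w^h = \kappa^{-2}\phi$; plugging such $h$ into the limit data $\langle \kappa w^f, \kappa w^h\rangle$ yields $\langle w^f, \phi\rangle$, i.e.\ one reads off $w^f$ on $(0,\infty)\times\Src$ directly against arbitrary test functions. This is what breaks the $AA^*$ symmetry, with no microlocal analysis. From there the paper characterizes non-radiating sources (Lemma \ref{lem_nonrad_test}), recovers $d_g$ on $\Src\times\Src$ and hence $(\Src,g|_{\Src})$ (Lemma \ref{lem:finding the distance function}), solves the wave equation internally for non-radiating sources, and uses the density of $\{w^f : f\in\mathcal N(\mathcal B)\}$ in $L^2(\mathcal B)$ to recover $\kappa^2 w^h$, then $\kappa$, and finally $\Lambda_\Src$ (Theorem \ref{th_response}). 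To repair your argument, replace the proposed microlocal extraction by this explicit test-source construction.
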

Recall that a metric tensor $g$ on $\R^n$ is non-trapping if for each compact $K \subset \R^n$ there exists $T>0$ such that for each $(p,\xi)\in T\R^n, \:p\in K,  \:\|\xi\|_g=1$,  it holds that $\gamma_{p,\xi}(t)\notin K$ when $t \ge T$. Here we denote by $\gamma_{p,\xi}$ the unique maximal geodesic of metric $g$ that satisfies the following initial conditions
$$
\gamma_{p,\xi}(0)=x \hbox{ and } \dot{\gamma}_{p,\xi}(0)=\xi.
$$

Note that the covariance data (\ref{covar_data}) is determined by the measurement $u|_{(0,\infty) \times \Src}$.
This implies the following corollary:

\begin{corollary}
The measurement $u|_{(0,\infty) \times \Src}$, with a single realization of the white noise source, determines the Riemannian manifold $(\R^n,g)$, up to an isometry, with probability one under the assumptions of Theorem \ref{th_main}.
\end{corollary}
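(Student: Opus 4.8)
The plan is to deduce the corollary directly from Theorem~\ref{th_main} by checking that, for every $T>0$, the empirical correlation $C_T$ is a function of the single measurement $u|_{(0,\infty)\times\Src}$ and of nothing else. Fix $T>0$, $t_1,t_2>0$ and $x_1,x_2\in\Src$. In the integral \eqref{covar_data} the variable $s$ ranges over $[0,T]$, so the integrand $u(t_1+s,x_1)\,u(t_2+s,x_2)$ only samples $u$ at the spatial points $x_1,x_2\in\Src$ and at times $t_1+s,t_2+s$; since $u$ vanishes for $t\le 0$ (zero Cauchy data and $\chi$ supported in $t\ge 0$), the nonzero contributions come only from times $>0$. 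Hence $C_T$ is determined by $u|_{(0,\infty)\times\Src}$. First I would phrase this at the level of distributions: $C_T$ is the element of $\D'((\R\times\Src)^2)$ obtained by pairing with test functions $f\otimes h$ supported in $(\R\times\Src)^2$, and the number $\pair{C_T,f\otimes h}$ is still computed solely from $u|_{(0,\infty)\times\Src}$. The only thing this step relies on beyond bookkeeping is that $C_T$ is a well-defined (random) distribution in the first place, i.e.\ that the pointwise product $u(t_1+s,x_1)\,u(t_2+s,x_2)$ and its $s$-average make sense; this is exactly what the earlier part of the paper establishes, via a wavefront-set argument for the product of distributions.

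Granting that, I would introduce the \emph{deterministic}, pathwise-defined map $R$ sending $u|_{(0,\infty)\times\Src}$ to the family $\{C_T\}_{T>0}$, and note that any invariant of $\{C_T\}_{T>0}$ is then an invariant of $u|_{(0,\infty)\times\Src}$. Theorem~\ref{th_main} provides a full-measure event $\Omega_0$ on which $\{C_T\}_{T>0}$ determines $(\R^n,g)$ up to isometry. On this same event, $u|_{(0,\infty)\times\Src}$ determines $\{C_T\}_{T>0}$ through $R$, hence determines $(\R^n,g)$ up to isometry. Thus the corollary holds on $\Omega_0$, an event of probability one.

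I do not expect a genuine obstacle: all the analytic content lives in Theorem~\ref{th_main} and in the prior construction of $C_T$ as a bona fide random element of $\D'((\R\times\Src)^2)$. The only point worth stating carefully is that the reduction preserves the ``with probability one'' clause, and it does, because the set on which $u|_{(0,\infty)\times\Src}$ determines $\{C_T\}_{T>0}$ is the whole sample space ($R$ is deterministic and defined pathwise), so intersecting with $\Omega_0$ changes nothing.
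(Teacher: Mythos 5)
Your argument is correct and is essentially the paper's own: the paper dispatches this corollary in one sentence, observing that the covariance data \eqref{covar_data} is determined by the measurement $u|_{(0,\infty)\times\Src}$ and then invoking Theorem \ref{th_main}, exactly as you do. One small inaccuracy worth noting: the well-definedness of $C_T$ established in Section \ref{sec:direct_problem} rests on the tensor product $\tau^s w\otimes\tau^s w$ of distributions in \emph{separate} variables together with a Pettis integral in $s$, not on a wavefront-set argument for a pointwise product of distributions.
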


The statistical stability of $C_T$, $T > 0$, allows us to reduce the passive imaging problem to a deterministic inverse problem, that we then solve. As this deterministic problem is of independent interest, we solve it in a more general geometric setting. Moreover we do not assume that the Riemannian manifold, we are considering about, is Euclidean outside some compact set. 
\begin{theorem}
\label{th:main_geom}
Let $(N,g)$ be a smooth and complete Riemannian manifold of dimension $n\geq 2$. Let $\Src\subset N$ be an open and nonempty set. Consider the following initial value problem for the wave equation
\begin{align}
\label{eq:problem_RM}
&\p_t^2 w(t,x) - \Delta_g w(t,x) = f,\quad \textrm{in} \:\: (0,\infty) \times N,
\\
&w|_{t = 0} = \p_t w|_{t=0} = 0.\nonumber
\end{align}
Let $\Lambda_\Src:C^{\infty}_0((0,\infty)\times \Src) \rightarrow C^{\infty}((0,\infty)\times \Src)$ be the local source-to-solution operator defined by
$$
\Lambda_\Src f=w|_{(0,\infty)\times \Src}.
$$
Then the data $(\Src,\Lambda_\Src)$ determines $(N,g)$ up to an isometry. More precisely this means the following: 

Let $(N_i,g_i), \: i=1,2$, be a smooth and complete Riemannian manifold. Let $\Src_i\subset N_i$ be open and nonempty, and assume that there exists a diffeomorphism 
\begin{equation}
\label{diffeo phi}
\phi:\mathcal{X}_1 \rightarrow \Src_2
\end{equation}
that satisfies
\begin{equation}
\label{eq: phi pulls back Lambda}
\phi^\ast (\Lambda_{\Src_2} f)= \Lambda_{\Src_1} (\phi^\ast f), \quad \hbox{for all } f\in C^{\infty}_0((0,\infty)\times \Src_2).
\end{equation}
Then $(N_1,g_1)$ and $(N_2,g_2)$ are Riemannian isometric.
\end{theorem}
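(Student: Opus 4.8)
The statement is a classical-style geometric inverse problem: recover a complete Riemannian manifold from a local source-to-solution map. The natural strategy is to reduce $(\Src,\Lambda_\Src)$ to the hyperbolic Dirichlet-to-Neumann data on a small piece of boundary (or rather to the known "boundary control" type data) and then invoke the boundary control method of Belishev--Kurylev. Concretely, I would proceed as follows. First I would show that $\Lambda_\Src$ determines, for each $p\in\Src$, the restriction to $\Src$ of the wave generated by a point source at $p$, hence the "inner products" $\langle w_f, w_h\rangle$ of waves via an energy/Blagovestchenskii-type identity expressed purely through $\Lambda_\Src$. Using finite speed of propagation, these inner products let one compute volumes of domains of influence $\{x : d_g(x,\Src\cap B)\le \tau\}$ and, by varying the supports, the distances $d_g(x,p)$ for $x$ in the "reachable" part of $N$ and $p\in\Src$. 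That is, $\Lambda_\Src$ determines the family of boundary (here: $\Src$-) distance functions $r_p = d_g(\cdot,p)|_\Src$ for all $p$ in the set $M$ of points that can be reached from $\Src$ by waves — and since $(N,g)$ is complete and connected, $M=N$.

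Next I would reconstruct the manifold as a set of distance functions: the map $p\mapsto r_p\in C(\Src)$ is a topological embedding of $N$ (this is where completeness and the fact that $\Src$ is open are used), and its image, together with the sup-norm metric, is determined by the data; one then upgrades this to a smooth structure and recovers the metric $g$ by showing that, in the boundary normal (semigeodesic) coordinates near $\Src$ and then globally by patching, the metric is determined by the distance functions. For the two-manifold formulation, given the diffeomorphism $\phi:\Src_1\to\Src_2$ intertwining the source-to-solution maps as in \eqref{eq: phi pulls back Lambda}, one checks that $\phi$ matches the corresponding distance-function data, hence extends to a Riemannian isometry $\Phi:(N_1,g_1)\to(N_2,g_2)$; uniqueness of the extension follows because an isometry is determined by its restriction to an open set.

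**Main obstacle.** The genuinely delicate point is the first step: extracting the wave inner products (equivalently, the Blagovestchenskii identity) from the \emph{local} source-to-solution operator $\Lambda_\Src$ alone, with sources and observations confined to the same open set $\Src$ rather than a boundary. One must handle the fact that $\Src$ is an interior region (waves pass through it from both sides), set up the correct integration-by-parts identity relating $\int\!\!\int u_f\,\partial_t^2 u_h - u_h\,\partial_t^2 u_f$ to $\Lambda_\Src$, and argue an appropriate approximate controllability statement — a Tataru-type unique continuation result for the wave equation — to show that waves from sources in $\Src$ densely fill the relevant domains of influence. Once controllability is in hand, the passage from wave inner products to distance functions, and from distance functions to the manifold, is the standard boundary control machinery adapted to the interior setting, so I expect the write-up to spend most of its effort on this controllability/unique-continuation input and on carefully defining the $\Src$-distance data.
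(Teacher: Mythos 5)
Your proposal follows essentially the same route as the paper: a Blagovestchenskii identity expressed through $\Lambda_\Src$ together with Tataru-based approximate controllability yields the family of distance functions $d_g(x,\cdot)|_{\Src}$, $x\in N$, and the manifold is then reconstructed from this family as a subset of $C(\overline{\Src})$ (topological embedding, then smooth structure via exponential coordinates, then the metric) exactly as you outline. The only notable divergence is in one intermediate step: instead of computing volumes of domains of influence, the paper tests ball inclusions $B(p,\ell_p)\subset\overline{B(y,\ell_y)\cup B(z,\ell_z)}$ via $L^2$-approximation of waves and must separately recover the cut distance $\tau(y,\xi)$ so that every point of $N$ is parametrized by a geodesic from $\Src$ before its cut point --- a detail your sketch glosses over but which is standard boundary control machinery, as you say.
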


Above the pullback $\phi^\ast$ of $\phi$ is defined by 
$\phi^\ast f = f \circ\widetilde{\phi}$,
where $\widetilde \phi$ is the lift of $\phi$ on $(0,\infty) \times \Src_1$, that is, $\widetilde \phi(t,x)=(t,\phi(x))  \: t>0, \: x\in \Src_1$.

Lastly we will point the connection of Theorem \ref{th:main_geom} to the following Inverse spectral problem of Laplace-Beltrami operator.

\begin{corollary}
\label{co:spectral problem}
Let $(N,g)$ be a smooth and compact Riemannian manifold of dimension $n\geq 2$ with out boundary. Let $\Src\subset N$ be an open and nonempty set. Let $(\varphi_k)_{k=1}^\infty \subset C^\infty(N)$ be the collection of orthonormal eigenfunctions of operator $\Delta_g$ in $L^2(N)$. Let $(\lambda_k)_{k=1}^\infty$ be the collection of corresponding eigenvalues of $\Delta_g$. Then the Spectral data 
\begin{equation}
\label{Spectral data}
(\Src,(\varphi_k|_{\Src})_{k=1}^\infty, (\lambda_k)_{k=1}^\infty)
\end{equation}
determines $(N,g)$ up to isometry.
\end{corollary}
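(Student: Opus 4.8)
The plan is to reduce Corollary \ref{co:spectral problem} to Theorem \ref{th:main_geom}: since a compact manifold is complete, it is enough to show that the spectral data \eqref{Spectral data} determines the local source-to-solution operator $\Lambda_\Src$ of the wave equation \eqref{eq:problem_RM}. Concretely, let $(N_i,g_i)$, $i=1,2$, be compact Riemannian manifolds without boundary of dimension $n\geq 2$, with open nonempty $\Src_i\subset N_i$, and suppose the data agree, i.e.\ there is a diffeomorphism $\phi\colon\Src_1\to\Src_2$ such that, for suitable orderings of the eigenvalues and choices of orthonormal eigenbases, $\lambda_k^{(1)}=\lambda_k^{(2)}=:\lambda_k$ and $\phi^\ast\varphi_k^{(2)}=\varphi_k^{(1)}$ on $\Src_1$ for all $k$. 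I would then produce an isometry between $(N_1,g_1)$ and $(N_2,g_2)$.

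First I would record the spectral representation of the solution of \eqref{eq:problem_RM}. Writing $f(t,\cdot)=\sum_k f_k(t)\varphi_k$ with $f_k(t)=\int_{\Src}f(t,y)\varphi_k(y)\,dV_g(y)$ (valid since $f(t,\cdot)$ is supported in $\Src$), the solution is $w(t)=\sum_k w_k(t)\varphi_k$, where $w_k$ depends only on $\lambda_k$ and $f_k$, being the solution of $w_k''-\lambda_k w_k=f_k$ with vanishing Cauchy data; explicitly $w_k(t)=\int_0^t\frac{\sin(\sqrt{-\lambda_k}\,(t-\tau))}{\sqrt{-\lambda_k}}f_k(\tau)\,d\tau$, read as $\int_0^t(t-\tau)f_k(\tau)\,d\tau$ when $\lambda_k=0$. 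The series converges in $C^\infty$ since $f\in C_0^\infty$ forces rapid decay of $f_k$ whereas $\varphi_k$ and its derivatives grow only polynomially in $k$, and one verifies that it solves \eqref{eq:problem_RM}. Restricting to $x\in\Src$ gives $\Lambda_\Src f(t,x)=\sum_k w_k(t)\varphi_k(x)$; hence $\Lambda_\Src$ is determined by the triple in \eqref{Spectral data} \emph{together with} the Riemannian volume form $dV_g$ on $\Src$.

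It thus remains to show that $dV_g|_\Src$ is itself determined by the data, which I would do by recovering $\Delta_g$ as a differential operator on $\Src$. Observe that $\{\varphi_k|_\Src\}_k$ is complete in $L^2(\Src,dV_g)$: if $h\in L^2(\Src,dV_g)$ is orthogonal to every $\varphi_k|_\Src$, its extension by zero is orthogonal to the orthonormal basis $\{\varphi_k\}$ of $L^2(N,dV_g)$ and hence vanishes. Now set $L:=\phi^\ast\Delta_{g_2}(\phi^{-1})^\ast$, a differential operator of order $\le 2$ with smooth coefficients on $\Src_1$. From $\Delta_{g_i}\varphi_k^{(i)}=\lambda_k\varphi_k^{(i)}$ and $\phi^\ast\varphi_k^{(2)}=\varphi_k^{(1)}$ one gets $(L-\Delta_{g_1})\varphi_k^{(1)}=0$ on $\Src_1$ for every $k$. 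For any $\psi\in C_0^\infty(\Src_1)$, integration by parts against $dV_{g_1}$ gives $\int_{\Src_1}\varphi_k^{(1)}\,(L-\Delta_{g_1})^\dagger\psi\,dV_{g_1}=0$ for all $k$, where $(\cdot)^\dagger$ denotes the formal $dV_{g_1}$-adjoint; completeness forces $(L-\Delta_{g_1})^\dagger\psi=0$, and since $\psi$ is arbitrary, $(L-\Delta_{g_1})^\dagger=0$, hence $L=\Delta_{g_1}$ on $\Src_1$. Equivalently $\Delta_{\phi^\ast g_2}=\Delta_{g_1}$ on $\Src_1$; comparing principal symbols yields $\phi^\ast g_2=g_1$, so $\phi$ is a Riemannian isometry of $(\Src_1,g_1)$ onto $(\Src_2,g_2)$ and in particular $\phi^\ast dV_{g_2}=dV_{g_1}$. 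Feeding this back into the representation of $\Lambda_\Src$ above — with the change of variables $y=\phi(x')$ in the integral defining $f_k$ — gives $\phi^\ast(\Lambda_{\Src_2}f)=\Lambda_{\Src_1}(\phi^\ast f)$ for all $f\in C_0^\infty((0,\infty)\times\Src_2)$, which is exactly \eqref{eq: phi pulls back Lambda}. Theorem \ref{th:main_geom} then shows $(N_1,g_1)$ and $(N_2,g_2)$ are isometric.

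I expect the only real content to be the recovery of $dV_g|_\Src$: a bare appeal to Theorem \ref{th:main_geom} would overlook that, given the data as the triple \eqref{Spectral data} alone, one still has to identify against which measure the eigenfunctions are integrated in forming $f_k$. The duality argument above — using the completeness of $\{\varphi_k|_\Src\}$ in $L^2(\Src,dV_g)$ together with $\Delta_g\varphi_k=\lambda_k\varphi_k$ — supplies this, while the remaining steps (the spectral solution formula, convergence of the series, and the behaviour of $\Delta_g$ and $dV_g$ under pullback by a diffeomorphism) are routine.
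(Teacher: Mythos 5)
Your proof is correct, and it follows the same overall reduction as the paper: expand the solution of \eqref{eq:problem_RM} in the eigenbasis, note that the Fourier coefficients of the source are integrals against $dV_g|_{\Src}$, recover the local source-to-solution map $\Lambda_\Src$, and invoke Theorem \ref{th:main_geom}. Where you genuinely diverge is in handling the one nontrivial obstacle, namely that $dV_g|_{\Src}$ is not part of the data \eqref{Spectral data}. The paper fixes an auxiliary volume form $\omega=\eta\, dV_g$ on a coordinate patch containing $\Src$, which only yields the operator $\Lambda_\Src M_\eta$; it then passes to adjoints (Lemma \ref{Adjoint of solution mapping}) to obtain $M_\eta\Lambda_\Src$ and recovers the unknown weight $\eta$ by the same non-radiating-source/density argument used for $\kappa$ in the proof of Theorem \ref{th_response}. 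You instead recover $g|_{\Src}$ (hence $dV_g|_{\Src}$) directly from the restricted eigenfunctions: the $\varphi_k|_{\Src}$ are complete in $L^2(\Src,dV_g)$ (extension by zero plus completeness in $L^2(N)$), each satisfies $(\Delta_{\phi^\ast g_2}-\Delta_{g_1})\varphi_k^{(1)}=0$ on $\Src_1$, and your duality argument then forces $\Delta_{\phi^\ast g_2}=\Delta_{g_1}$, whence $\phi^\ast g_2=g_1$ by comparison of principal symbols. This is cleaner and more self-contained than the paper's route, and it gives the slightly stronger intermediate conclusion that $\phi$ is already a Riemannian isometry of $(\Src_1,g_1)$ onto $(\Src_2,g_2)$ before Theorem \ref{th:main_geom} is applied; the only cost is that you must state precisely what it means for two spectral data sets to agree (matching eigenvalues and a matching choice of orthonormal bases), an assumption both arguments implicitly need. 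Your remark on the $C^\infty$ convergence of the eigenfunction series (rapid decay of the coefficients of a $C_0^\infty$ source versus polynomial growth of the eigenfunctions in $k$) is also correct and fills a point the paper leaves unsaid.
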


\subsection{Outline the paper}
We begin by showing that the empirical correlation $C_T$ is well-defined in Section \ref{sec:direct_problem}. 
In Section \ref{sec:Statistical stability} we show 
the statistical stability discussed above, and in Section \ref{sec:Reduction to the deterministic inverse problem} we reduce the proof of Theorem \ref{th_main} to that of Theorem \ref{th:main_geom}.
We prove Theorem \ref{th:main_geom} in Section \ref{sec_deterministic}. 
For the convenience of the reader, we have collected some well-known results in an appendix.  

\subsection{Previous literature}

For previous mathematical results on passive imaging problems we refer to \cite{garnier2009passive, de2013retrieval}.
The monograph \cite{garnier2016passive} gives a thorough review of the related literature. 
Passive imaging problems arise in geophysical applications. In seismic imaging ambient noise sources, that appear due to nonlinear interaction of ocean waves with the ocean bottom, can be utilized to image the wave speed in the subsurface of the Earth, see e.g. \cite{shapiro2005high,stehly2006study,yao2006surface}.

We also mention the closely related topic of imaging random media by time reversal techniques
\cite{borcea2002imaging, borcea2003theory, bal2003time, de2009estimating} as well as inverse scattering from random potential or random boundary conditions \cite{caro2016inverse, lassas2008inverse, helin2014inverse}.

Let us now turn to results on deterministic inverse problems similar to Theorem \ref{th:main_geom}.
In such coefficient determination problems,
it is typical to use
the Dirichlet-to-Neumann map to model the data.
Apart from immediate applications, this is reasonable since
several other types of data can be reduced to the 
Dirichlet-to-Neumann case. For instance, in \cite{Nachman1988a}
an inverse scattering problem is solved via a reduction to 
the inverse conductivity problem in \cite{Sylvester1987}, and the latter uses the Dirichlet-to-Neumann map as data. 
In the present paper, however,
we do not perform a reduction to the Dirichlet-to-Neumann case but adapt techniques originally developed in that case to the case of local source-to-solution map $\Lambda_\chi$.

The approach that we use is a modification of the Boundary Control method. This method was first developed by Belishev to the acoustic wave equation on $\R^n$ with an isotropic wave speed
\cite{belishev1987approach}. A geometric version of the method, suitable when the wave speed is given by a Riemannian metric tensor as in the present paper, was introduced by Belishev and Kurylev \cite{belishev1992reconstruction}.
We refer to \cite{Katchalov2001} for a thorough review of the related literature. 
Local reconstruction of the geometry from the local source-to-solution map $\Lambda_\chi$ has been studied as a part of iterative schemes, see e.g. \cite{Isozaki2010,Kurylev2015}. In the present paper we give a global uniqueness proof that does not rely on an iterative scheme. 
For general aspects of unique solvability in geometric inverse problems, see \cite{uhlmann1998inverse,krupchyk2008inverse,de2014reconstruction,lassas2014inverse} and references therein.

We restrict our attention to the unique solvability of the inverse problem but note that several variants of the Boundary Control method have been studied computationally \cite{belishev1999dynamical,Hoop2016,kabanikhin2004direct,pestov2010numerical} 
and stability questions have been investigated \cite{anderson2004boundary,korpela2016regularization,Liu2012}.

This work continues the line of research started by the authors in \cite{HLO1,HLO2}, where similar unique solvability of the geometry was considered for random and pseudo-random boundary sources. A novel feature of this paper is that we consider passive imaging, when the source is not assumed to be known.  

\section{The stochastic direct problem}
\label{sec:direct_problem}

In this section we show that the running averages $C_T$, $T > 0$, are well-defined as random variables.
Let us first recall the concept of generalized Gaussian random variable \cite{GV4}. 
A cylindrical set in a locally convex vector space $V$ with the dual $V'$ is 
of the form
\begin{equation*}
	\left\{u \in V \; | \; \left(\langle \ell_1,u\rangle, \ldots, \langle \ell_k, u\rangle\right) \in B \right\},
\end{equation*}
where $k\geq 1$, $\ell_1,\ldots,\ell_k \in V'$, and $B$ is a Borel subset of $\R^
k$, i.e., $B\in{\mathcal B}(\R^k)$. Above, we write $\langle \cdot,\cdot\rangle = \langle \cdot,\cdot\rangle_{V'\times V}$ for the dual pairing between $V'$ and $V$.
The $\sigma$-algebra generated by cylindrical sets in $V$ is denoted by ${\mathcal B}_c(V)$.
Notice that the cylindrical $\sigma$-algebra is always a subset of the Borel $\sigma$-algebra, and the two $\sigma$-algebras
are known to coincide if $V$ is a separable Fr\'{e}chet space \cite[Thm. A.3.7.]{Bogachev}.

We denote the rapidly decaying functions on $\R^n$ by ${\mathcal S}(\R^d)$.
The topological dual of ${\mathcal S}(\R^d)$ is the space of tempered distributions ${\mathcal S}'(\R^d)$.
It is well-known that ${\mathcal S}'(\R^d)$ is a locally convex topological vector space (even nuclear).

Throughout the paper, let $(\Omega, {\mathcal F}, \P)$ stand for a complete probability space.

\begin{definition}
A generalized random variable is a measurable function
\begin{equation*}
	X : (\Omega, {\mathcal F}) \to ({\mathcal S}'(\R^d), {\mathcal B}_c({\mathcal S}'(\R^d))).
\end{equation*}
A generalized random variable $X$ is called Gaussian, if for all $\phi_1,\ldots,\phi_k \in {\mathcal S}(\R^d)$, $k\in \N$, the mapping
\begin{equation*}
	\Omega \ni \omega \mapsto \left(\langle X(\omega),\phi_1\rangle,\ldots,\langle X(\omega),\phi_k\rangle\right) \in \R^k
\end{equation*}
is a Gaussian random variable.
\end{definition}

The probability law of a generalized Gaussian random variable $X$ is determined by the expectation $\expec X$ and the covariance operator $C_X : {\mathcal S}(\R^d) \to {\mathcal S}'(\R^d)$ defined by
\begin{equation}
	\label{eq:general_covariance}
	\langle \psi_1, C_X \psi_2\rangle = \expec \left(\langle X-\expec X, \psi_1\rangle \langle X-\expec X, \psi_2\rangle \right).
\end{equation}
If $X$ is zero-mean and satisfies $C_X = \iota$, where $\iota : {\mathcal S}(\R^d) \to {\mathcal S}'(\R^d)$ is the identity operator $\iota(\phi)=\phi$,
then $X$ is called Gaussian white noise.

\begin{remark}
The construction above is identical for generalized random variables obtaining values in the space of generalized functions ${\mathcal D}'(\R^d)$. This was also the original formulation in \cite{GV4}.
\end{remark}

It was proved by Kusuoka in \cite{Kusuoka} that for any $\epsilon>0$, white noise satisfies 
\begin{equation}
	\label{eq:w_sobolev_weighted}
	\W \in H^{-d/2-\epsilon}(\R^d; \langle x\rangle^{-d/2-\epsilon}) \quad \textrm{almost surely},
\end{equation}
where the weight function is defined by $\langle x\rangle = (1+|x|^2)^{1/2}$.  
Moreover, we have $H^{-d/2-\epsilon}(\R^d; \langle x\rangle^{-d/2-\epsilon}) \in {\mathcal B}_c({\mathcal S}'(\R^d))$ (see e.g. \cite[Prop. 7]{Fageot}) and therefore we can consider $\W$ as a random variable restricted to 
$H^{-d/2-\epsilon}(\R^d; \langle x\rangle^{-d/2-\epsilon})$ assigned with the cylindrical $\sigma$-algebra. Since the weighted Sobolev space is separable (and Fr{\'e}chet), the cylindrical $\sigma$-algebra coincides with the Borel $\sigma$-algebra and $\W$ is Borel measurable in $H^{-d/2-\epsilon}(\R^d; \langle x\rangle^{-d/2-\epsilon})$.
Finally, since we have a continuous embedding 
$H^{-d/2-\epsilon}(\R^d; \langle x\rangle^{-d/2-\epsilon}) \subset H^{-d/2-\epsilon}_{loc}(\R^d)$, we can identify $\W$ as a random variable
\begin{equation*}
	\W : (\Omega, {\mathcal F}) \to (H^{-d/2-\epsilon}_{loc}(\R^d), {\mathcal B}(H^{-d/2-\epsilon}_{loc}(\R^d))).
\end{equation*}

We denote by $\Box_\chi^{-1}$ the solution operator of (\ref{eq:problem}),
that is, $\Box_\chi^{-1} (W) = u$ where $u$ solves (\ref{eq:problem}) and $u$ is defined to be zero for negative times.
Then 
$$
\Box_\chi^{-1} : H_{loc}^\sigma(\R^{1+n}) \to H^{\sigma+1}_{loc}(\R^{1+n}), \quad \sigma \in \R,
$$
is continuous, see e.g. \cite[Thm. 23.2.4]{Hormander3}.
We denote by $\tau^s$ the translation by $s \in \R$ in time, that is,
$$
\tau^s \phi(t) = \phi(t + s), \quad \phi \in C_0^\infty(\R),
$$
and extend this definition to $\D'(\R)$ by 
$$
\bra \tau^s w, \phi \cet_{\D'\times C_0^\infty(\R)} = \bra w, \tau^{-s} \phi \cet_{\D'\times C_0^\infty(\R)}.
$$
The function
$$
\Phi : (0, T )\times \R \to \R, \quad  \Phi(s, t) = \tau^{−s} \phi(t)
$$
is smooth, and moreover $\Phi = 0$ when $t \notin (0, T+R)$ where $R > 0$ is such that $\supp(\phi) \subset (0,R)$. Hence function
\begin{equation}
	\label{eq:translated_duality}
s \mapsto \pair{w, \Phi(s, \cdot)}_{\D'\times C_0^\infty(\R)} = \pair{\tau^s w, \phi}_{\D'\times C_0^\infty(\R)}
\end{equation}
is smooth for all $w \in \D'(\R)$  and $\phi \in C^{\infty}_0(\R)$, see \cite[Thm. 2.1.3]{Hormander1}. An analogous argument shows that
\begin{equation*}
s \mapsto \pair{\tau^s w \otimes \tau^s w, \phi}_{\D'\times C_0^\infty(\R^{2+2n})}
\end{equation*}
is smooth for all $w \in \D'(\R^{1+n})$ and $\phi \in C^{\infty}_0(\R^{2+2n})$. Here $\otimes$ denotes
the tensor product of distributions, see e.g. \cite[Thm. 5.1.1]{Hormander1}  for the definition.

For a fixed $T > 0$, we define the map 
$$
A_T(w) = \frac 1 {T} \int_0^T \tau^s w \otimes \tau^s w\, ds, \quad w \in H^{\sigma}_{loc}(\R^{1+n}),
$$
in the sense of the Pettis integral, that is,
$$
\bra A_T(w), \phi \cet_{\D'\times C_0^\infty(\R^{2+2n})}
= \frac 1 {T} \int_0^T \bra \tau^s w \otimes \tau^s w, \phi \cet_{\D'\times C_0^\infty(\R^{2+2n})} ds.
$$

The integral above defines $A_T(w)$ as a generalized function in $\D'(\R^{2+2n})$ and, moreover, yields a continuous map in the following sense:
\begin{lemma}
The map $A_T : H^{-\sigma}_{loc}(\R^{1+n}) \to H^{-\sigma}_{loc}(\R^{2+2n})$, $\sigma \in \R$, is continuous.
\end{lemma}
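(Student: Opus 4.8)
The plan is to isolate the one analytic ingredient — continuity of the tensor product on the Sobolev scale — and obtain everything else from localization and from pulling the running average out of the norm by Minkowski's inequality. I work with the Sobolev index $-\sigma$ throughout, and I will use three elementary facts: translation $\tau^s$ in the time variable is an isometry of $H^{-\sigma}(\R^{1+n})$ (it is the Fourier multiplier $e^{is\xi}$), multiplication by a fixed $\Phi\in C_0^\infty$ is bounded on $H^{-\sigma}(\R^{1+n})$, and for a product cutoff one has $(\Phi_1\otimes\Phi_2)(u\otimes v)=(\Phi_1 u)\otimes(\Phi_2 v)$.

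First I would localize. Given a compact $K\subset\R^{2+2n}$, enlarge it to a product $K=K_1\times K_2$ with $K_i\subset\R^{1+n}$ compact, and fix $\Phi_i\in C_0^\infty(\R^{1+n})$ with $\Phi_i\equiv 1$ near $K_i$ and $\psi_i\in C_0^\infty(\R^{1+n})$ with $\psi_i\equiv 1$ on the fixed compact set $K_i+([0,T]\times\{0\})$, so that $\Phi_i\,\tau^s w=\Phi_i\,\tau^s(\psi_i w)$ for every $s\in[0,T]$. The $H^{-\sigma}(K)$-seminorm of $A_T(w)$ is dominated by $\|(\Phi_1\otimes\Phi_2)A_T(w)\|_{H^{-\sigma}(\R^{2+2n})}$, and Minkowski's integral inequality applied to the Pettis integral defining $A_T$ gives
\[
\|(\Phi_1\otimes\Phi_2)A_T(w)\|_{H^{-\sigma}(\R^{2+2n})}\le\frac1T\int_0^T\big\|(\Phi_1\tau^s w)\otimes(\Phi_2\tau^s w)\big\|_{H^{-\sigma}(\R^{2+2n})}\,ds .
\]
Applying the tensor-product estimate to the integrand and then using the isometry property and boundedness of multiplication by $\Phi_i$, one gets $\|\Phi_i\tau^s w\|_{H^{-\sigma}(\R^{1+n})}=\|\Phi_i\tau^s(\psi_i w)\|_{H^{-\sigma}}\le C_i\|\tau^s(\psi_i w)\|_{H^{-\sigma}}=C_i\|\psi_i w\|_{H^{-\sigma}(\R^{1+n})}$ uniformly for $s\in[0,T]$, hence $\|(\Phi_1\otimes\Phi_2)A_T(w)\|_{H^{-\sigma}(\R^{2+2n})}\le C(K,T)\,\|\psi_1 w\|_{H^{-\sigma}(\R^{1+n})}\,\|\psi_2 w\|_{H^{-\sigma}(\R^{1+n})}$, i.e.\ a target seminorm is bounded by a product of domain seminorms. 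Finally, running the same estimate for the symmetric bilinear map $B_T(w,w')=\frac1T\int_0^T\tau^s w\otimes\tau^s w'\,ds$, for which $A_T(w)=B_T(w,w)$, and writing $A_T(w)-A_T(w')=B_T(w-w',w)+B_T(w',w-w')$, yields continuity: if $w_j\to w$ in $H^{-\sigma}_{loc}(\R^{1+n})$ then every target seminorm of $A_T(w_j)-A_T(w)$ tends to $0$, since $\|\psi_i(w_j-w)\|_{H^{-\sigma}}\to0$ while $\|\psi_i w_j\|_{H^{-\sigma}}$ remains bounded.

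The one genuinely nontrivial input — and where I expect the real work to be — is the tensor-product bound $\|u\otimes v\|_{H^{-\sigma}(\R^{m+k})}\le C\|u\|_{H^{-\sigma}(\R^m)}\|v\|_{H^{-\sigma}(\R^k)}$. For $-\sigma\ge0$ this is immediate on the Fourier side from $1+|\xi|^2+|\eta|^2\le(1+|\xi|^2)(1+|\eta|^2)$. For $-\sigma<0$ the naive frequency comparison only produces a bound into $H^{2(-\sigma)}$, losing $|\sigma|$ derivatives on the ``high $\times$ high'' diagonal $|\xi|\sim|\eta|\to\infty$; here one must instead analyse the averaged expression $\frac1T\int_0^T\tau^s w\otimes\tau^s w\,ds$ directly, exploiting the extra damping factor $m_T(\xi_1+\xi_2)$ with $|m_T|\le\min(1,2/(T|\xi_1+\xi_2|))$ generated by the $s$-integration, in tandem with a Littlewood–Paley splitting of $u\otimes u$. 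Pinning down exactly this diagonal interaction is the main obstacle, and I would settle it first, the rest of the argument being purely organizational.
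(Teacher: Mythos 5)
Your reduction --- localizing with cutoffs, pulling the $s$-average out by Minkowski's inequality, using that time translation is an isometry of $H^{-\sigma}$, and polarizing the quadratic map through the bilinear $B_T$ --- is exactly the skeleton of the paper's argument (the paper works with a single cutoff $\tilde\psi$ and pairs against a test function rather than invoking Minkowski, but the content is the same). The genuine gap is that you stop at the one step that carries all of the difficulty: the bound $\norm{u\otimes v}_{H^{-\sigma}(\R^{2+2n})}\le C\norm{u}_{H^{-\sigma}(\R^{1+n})}\norm{v}_{H^{-\sigma}(\R^{1+n})}$ is justified only for $-\sigma\ge 0$, and for $-\sigma<0$ --- which is precisely the case the paper needs, since $C_T=A_T(\Box_\chi^{-1}\mathbb W)$ lives at negative Sobolev index --- you defer the estimate to an unexecuted sketch. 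As written, the proposal therefore does not prove the statement.

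Moreover, the proposed rescue cannot close the gap. The damping factor $m_T(\tau_1+\tau_2)$ produced by the $s$-average acts only on the \emph{sum of the two time frequencies}; it vanishes to no effect on the resonant set $\tau_1+\tau_2\approx 0$ and does nothing in the spatial frequencies, and that is exactly where the high$\times$high interaction sits (for real $w$ the Fourier mass always pairs $(\tau,\eta)$ with $(-\tau,-\eta)$). Concretely, for $\sigma>0$ take $w=\lambda\,\phi(t,x)\cos(Nt)$ with $\phi\in C_0^\infty$ fixed and nonnegative: every domain seminorm of $w$ is $O(\lambda N^{-\sigma})$, while $A_T(w)$ contains the completely undamped term $\tfrac{\lambda^2}{2}\cos(N(t_1-t_2))\,T^{-1}\int_0^T\phi(t_1+s,x_1)\phi(t_2+s,x_2)\,ds$, whose frequency support sits near $(\tau_1,\tau_2)=\pm(N,-N)$, so that a fixed target seminorm is $\gtrsim\lambda^2N^{-\sigma}$. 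Choosing $\lambda=\delta N^{\sigma}$ makes the domain seminorm of order $\delta$ while the target seminorm grows like $\delta^2N^{\sigma}$; no choice of $\tilde\psi$ and $\delta$ can prevent this. What your frequency analysis does yield, and what should be salvaged, is continuity of $A_T:H^{-\sigma}_{loc}(\R^{1+n})\to H^{-2\sigma}_{loc}(\R^{2+2n})$ for $\sigma>0$, via $(1+|\zeta_1|^2+|\zeta_2|^2)^{-2\sigma}\le(1+|\zeta_1|^2)^{-\sigma}(1+|\zeta_2|^2)^{-\sigma}$; this weaker target suffices for everything the lemma is used for downstream, namely the measurability of $C_T$. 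Be aware that the paper's own proof invokes the same tensor-product inequality in its final display without restricting the sign of $\sigma$, so you have correctly located the crux of the matter --- but locating it is not the same as resolving it.
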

\begin{proof}
We recall that the topology of $H^{-\sigma}_{loc}(\R^{1+n})$ is induced by the semi-norms
$$
w \mapsto \norm{\psi w}_{H^{-\sigma}(\R^{1+n})},  
\quad \psi \in C_0^\infty(\R^{1+n}).
$$
Let $w_0 \in H^{-\sigma}_{loc}(\R^{1+n})$, $\psi \in C_0^\infty(\R^{2+2n})$ and $\epsilon > 0$.
In order to show that $A_T$ is continuous, it is enough to show \cite[p. 64]{Treves} that there are $\tilde \psi \in C_0^\infty(\R^{1+n})$ and $\delta > 0$ such that 
$$
\norm{\tilde \psi (w-w_0)}_{H^{-\sigma}(\R^{1+n})} < \delta
\quad \text{implies} \quad 
\norm{\psi (A_T(w)-A_T(w_0))}_{H^{-\sigma}(\R^{2+2n})} < \epsilon.
$$
We choose $\tilde \psi \in C_0^\infty(\R^{1+n})$
so that $(\tilde \psi \otimes \tilde \psi) \tau_1^{-s} \tau_2^{-s} \psi = \tau_1^{-s} \tau_2^{-s} \psi$ for all $s \in (0,T)$. Here $\tau_j^{-s}$, $j=1,2$, act in the different time variables. Let $\phi \in H^{\sigma}(\R^{2+2n})$. It follows that
\begin{align*}
&|\bra \psi (A_T(w) - A_T(w_0)), \phi \cet_{H^{-\sigma} \times H^{\sigma}(\R^{2+2n})}|
\\&\quad\le 
\frac 1 {T} \int_0^T \norm{(\tilde \psi \otimes \tilde \psi)(w \otimes w - w_0 \otimes w_0)}_{H^{-\sigma}(\R^{2+2n})} \norm{\tau_1^{-s} \tau_2^{-s} \psi \phi}_{H^{\sigma}(\R^{2+2n})} ds
\\&\quad\le C
\norm{\tilde \psi (w-w_0) \otimes \tilde \psi w + \tilde \psi w_0 \otimes \tilde \psi(w -w_0)}_{H^{-\sigma}(\R^{2+2n})} \norm{\phi}_{H^{\sigma}(\R^{2+2n})}.
\end{align*}
Finally, for small $\delta > 0$
\begin{align*}
&\norm{\tilde \psi (w-w_0) \otimes \tilde \psi w + \tilde \psi w_0 \otimes \tilde \psi(w -w_0)}_{H^{-\sigma}(\R^{2+2n})}
\\&\quad\le \delta  \norm{\tilde \psi w}_{H^{-\sigma}(\R^{1+n})}
+ \norm{\tilde \psi w_0}_{H^{-\sigma}(\R^{1+n})} \delta
\le C \delta.
\end{align*}
\end{proof}

By combining the continuity results above, we define $C_T(\omega) = A_T(\Box_\chi^{-1} (\mathbb W(\omega)))$, $T > 0$, and see that 
$$
C_T : \Omega \to (H^{\sigma}_{loc}(\R^{2+2n}),{\mathcal B}(H^{\sigma}_{loc}(\R^{2+2n}))) ,
\quad \sigma < -\frac{1+n}2 + 1,
$$
is a random variable. 
\begin{remark}
Since the weighted Sobolev space $H^{-d/2-\epsilon}(\R^d; \langle x\rangle^{-d/2-\epsilon})$ is separable, the random variable $\W$ in \eqref{eq:w_sobolev_weighted} has the Radon property \cite{Bogachev}.
Notice carefully that the Radon property is transferred through any continuous mappings and therefore also $C_T$ is Radon.
\end{remark}
%
%
%

\section{The stochastic inverse problem and statistical stability}
\label{sec:Statistical stability}

For any function $f \in C_0^\infty(\R^{1+n})$, let us define $v^f = v$ as the solution of a time reversed wave equation 
\begin{align}
\label{eq_wave_test}
&\p_t^2 v - \Delta_g v = f \quad \text{in $(-\infty,S) \times \R^n$},
\\
&v|_{t = S} = \p_t v|_{t= S} = 0,\nonumber
\end{align}
where $S \in \R$ is large enough so that $f \in C_0^\infty((-\infty, S) \times \R^{n})$. 
In this section we show the following theorem. 

\begin{theorem}
\label{th_inner_prods_by_ergodicity}
Suppose that $n \ge 3$, $(\R^n,g)$ is non-trapping and that $g$ coincides with the Euclidean metric outside a compact set. 
Let $\mathbb D \subset C_0^\infty((0,\infty) \times \Src)$
be a countable set. 
There exists $\Omega_0 \subset \Omega$ such that $\P(\Omega_0) = 0$
and for all $\omega \in \Omega \setminus \Omega_0$ and all $f,h \in \mathbb D$, it holds that
$$
\lim_{T \to \infty} \bra C_T(\omega), f \otimes h \cet_{\D'\times C_0^\infty(\R^{2+2n})}
= \bra \kappa v^f, \kappa v^h\cet_{L^2(\R^{1+n})}.
$$
\end{theorem}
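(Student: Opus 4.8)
The plan is to rewrite the left-hand side as the time average of a functional of the stationary process $s\mapsto\tau^sW$ (with $W:=\W(\omega)$), and to combine a deterministic Duhamel identity with the pointwise ergodic theorem for the time-translation flow of white noise, controlling the non-stationary transient separately. By the definition of $C_T(\omega)=A_T(\Box_\chi^{-1}W)$ and of the Pettis integral, with $u:=\Box_\chi^{-1}W$,
$$
\bra C_T(\omega),f\otimes h\cet_{\D'\times C_0^\infty(\R^{2+2n})}=\frac1T\int_0^T\bra u,\tau^{-s}f\cet\,\bra u,\tau^{-s}h\cet\,ds .
$$
The first ingredient is the Duhamel identity $\bra u,\tau^{-s}f\cet=\bra W,\chi\cdot\tau^{-s}v^f\cet$. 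It follows by integration by parts from $\Box_g u=\chi W$ on all of $\R^{1+n}$ (the Cauchy data of $u$ vanish and $\chi W$ is supported in $\{t\ge0\}$), from $\Box_g v^f=f$ on $\R^{1+n}$ with $v^f$ supported in $\{t\le T_f\}$ where $T_f:=\sup\{t:\ (t,x)\in\supp f\}$, and from $v^{\tau^{-s}f}=\tau^{-s}v^f$ (the metric is time-independent); the integration by parts is legitimate because the product of the two solutions is supported in a compact set, the boundary terms vanish by the Cauchy conditions, and $W$ may be approximated by smooth functions. Note $\chi\cdot\tau^{-s}v^f\in C_0^\infty(\R^{1+n})$ since $\supp\kappa$ is compact and $v^f$ has compact spatial support on bounded time intervals by finite speed of propagation.

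Writing $\chi_0=1+(\chi_0-1)$ and using $\kappa\cdot\tau^{-s}v^f=\tau^{-s}(\kappa v^f)$ gives
$$
\bra u,\tau^{-s}f\cet=\bra\tau^sW,\kappa v^f\cet+E_f(s),\qquad E_f(s):=\bra W,(\chi_0-1)\,\kappa\,\tau^{-s}v^f\cet ,
$$
where $\bra W,\cdot\cet$ on $L^2$-arguments is understood as the Wiener integral. Set $F^f(\omega):=\bra\W(\omega),\kappa v^f\cet$, a centred Gaussian random variable: it is well defined because $\kappa v^f\in L^2(\R^{1+n})$, which holds since $v^f$ is smooth, compactly supported in $x$, supported in $\{t\le T_f\}$, and, for $t$ below $\supp_t f$, is a free wave with compactly supported Cauchy data whose local energy is square-integrable in $t$, by the non-trapping local energy decay estimate available for $n\ge 3$. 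Then $\bra\tau^sW,\kappa v^f\cet=F^f(\theta_s\omega)$ for the measure-preserving time-translation flow $\{\theta_s\}_{s\in\R}$ on the white-noise space, and this flow is mixing — white noise has independent values over disjoint time-slabs — hence ergodic. By Wiener's pointwise ergodic theorem applied to the integrable functions $F^fF^h$ and $(F^f)^2$ (products of jointly Gaussian variables), almost surely
$$
\frac1T\int_0^TF^f(\theta_s\omega)F^h(\theta_s\omega)\,ds\to\expec[F^fF^h]=\bra\kappa v^f,\kappa v^h\cet_{L^2(\R^{1+n})},\qquad \frac1T\int_0^T(F^f)^2(\theta_s\omega)\,ds\to\norm{\kappa v^f}_{L^2}^2 ,
$$
the first limit using that the covariance operator of white noise is the identity.

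It remains to control the transient. Since $\chi_0-1$ is bounded and supported in $\{t\le1\}$, $\expec[E_f(s)^2]=\norm{(\chi_0-1)\kappa\,\tau^{-s}v^f}_{L^2}^2\le C\int_{-\infty}^{1-s}\norm{\kappa v^f(t',\cdot)}_{L^2}^2\,dt'$, and integrating in $s$ (Tonelli) gives $\expec\int_0^\infty E_f(s)^2\,ds\le C\int_{-\infty}^1(1-t')\norm{\kappa v^f(t',\cdot)}_{L^2}^2\,dt'$, which is finite: the integrand is bounded on $(0,1)$, and the tail as $t'\to-\infty$ is finite by the weighted bound $\int_{-\infty}^0(1+|t'|)\norm{\kappa v^f(t',\cdot)}_{L^2}^2\,dt'<\infty$, again by non-trapping local energy decay with $n\ge 3$. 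Hence $\int_0^\infty E_f(s)^2\,ds<\infty$ almost surely, so $\tfrac1T\int_0^TE_f(s)^2\,ds\le\tfrac1T\int_0^\infty E_f(s)^2\,ds\to 0$ almost surely. Expanding $(F^f(\theta_s\omega)+E_f(s))(F^h(\theta_s\omega)+E_h(s))$ under the time average, the main term converges to $\bra\kappa v^f,\kappa v^h\cet_{L^2}$ by the ergodic theorem and the three cross terms tend to $0$ almost surely by the Cauchy--Schwarz inequality together with the limits above; taking $\Omega_0$ to be the union, over the countably many pairs $(f,h)\in\mathbb D\times\mathbb D$, of the corresponding null sets finishes the proof. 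I expect the transient estimate to be the main obstacle: being non-stationary, $E_f$ is not reachable by the ergodic theorem, and controlling its time-average forces the weighted integrability $\int_{-\infty}^0|t'|\,\norm{\kappa v^f(t',\cdot)}_{L^2}^2\,dt'<\infty$, which is exactly where the hypotheses $n\ge3$ and non-trapping enter via local energy decay; a secondary technical point is justifying ergodicity of the white-noise time-shift and the use of Wiener's theorem on the $\D'$-valued probability space.
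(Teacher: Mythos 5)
Your proposal is correct in substance but follows a genuinely different route from the paper's. You split $\bra u,\tau^{-s}f\cet=\bra\tau^s W,\kappa v^f\cet+E_f(s)$ into a strictly stationary part plus a transient, apply the Birkhoff ergodic theorem for the (mixing, hence ergodic) time-shift flow of white noise to the stationary product $F^fF^h$, and kill the transient by showing $\int_0^\infty E_f(s)^2\,ds<\infty$ almost surely via the weighted local-energy-decay bound. The paper never introduces the shift flow: it works directly with the non-stationary process $Z_s=\bra u^s,f\cet\bra u^s,h\cet$, proves $\expec\bra C_T,f\otimes h\cet\to\bra\kappa v^f,\kappa v^h\cet$ (Lemma \ref{lem:conv_of_expec}, where the $(\chi_0-1)$-transient is absorbed into the expectation estimate rather than treated pathwise), establishes a quantitative decay of ${\rm Cov}(Z_r,Z_{r+s})$ in the lag $s$ via the Isserlis formula together with the same local energy decay (Lemma \ref{lem:variance_decay}), and then invokes the strong law of large numbers for weakly correlated processes (Theorem \ref{thm:aux_ergodicity}, Cram\'er--Leadbetter) rather than an ergodic theorem proper. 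Both arguments hinge on the same analytic input (Lemma \ref{lem:led2} with $n\ge 3$ and non-trapping); yours buys conceptual transparency and avoids the fourth-moment computation, at the price of having to justify ergodicity of the shift and a jointly measurable version of $s\mapsto F^f(\theta_s\omega)$ --- note that $\bra\tau^sW,\kappa v^f\cet$ and $E_f(s)$ are Wiener integrals rather than distributional pairings, since $\kappa v^f$ is not compactly supported in time --- whereas the paper's correlation-decay criterion needs no flow structure at all and, as a by-product of the same estimates, yields the quantitative variance bound of Lemma \ref{lem:var_estimate}.
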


In what follows, we write $\bra \cdot, \cdot\cet = \bra \cdot, \cdot \cet_{\D'\times C_0^\infty(\R^{2+2n})}$.
\begin{lemma}
\label{lem:inner poduct of white noise}
Let $W \in \D'(\R^{1+n})$ and $f \in C_0^\infty(\R^{1+n})$ be arbitrary sources in problems \eqref{eq:problem} and \eqref{eq_wave_test}, respectively. Moreover, let $u$ and $v^f$ be the corresponding solutions.
Then we have the identity
$$
\bra u, f \cet = \bra W, \chi v^f \cet.
$$

\end{lemma}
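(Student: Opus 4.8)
The statement to prove is Lemma~\ref{lem:inner poduct of white noise}: that $\langle u, f\rangle = \langle W, \chi v^f\rangle$, where $u$ solves the forward problem with source $\chi W$ and zero initial data at $t=0$, and $v^f$ solves the backward problem with source $f$ and zero final data at $t=S$.

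The plan is to use integration by parts in spacetime, exploiting the fact that the d'Alembertian $\Box_g = \partial_t^2 - \Delta_g$ is (formally) self-adjoint with respect to the $L^2$-pairing induced by the Riemannian volume form $dV_g\, dt$. Pairing $\Box_g u = \chi W$ against $v^f$ and $\Box_g v^f = f$ against $u$, and subtracting, we should get
\[
\langle \chi W, v^f\rangle - \langle u, f\rangle = \langle \Box_g u, v^f\rangle - \langle u, \Box_g v^f\rangle,
\]
and the right-hand side is a boundary term that vanishes because of the complementary initial/final conditions: $u$ and $\partial_t u$ vanish at $t = 0$ (and $u$ is supported in $t\ge 0$), while $v^f$ and $\partial_t v^f$ vanish at $t = S$ (and $v^f$, being the backward solution, is supported in $t \le S$). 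The spatial integration by parts for $\Delta_g$ produces no boundary term since everything has the right decay / we work on $\R^n$ with the relevant functions having suitable support in the spatial integrals against the test-function-generated solution $v^f$, which has compact spatial support in any time slice by finite speed of propagation. Then $\langle \chi W, v^f\rangle = \langle W, \chi v^f\rangle$ since $\chi$ is a smooth real multiplier.

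The main subtlety—and the step I would be most careful about—is that $W$ is only a distribution (indeed $u$ only a local Sobolev function of negative order), so the pairings $\langle u, f\rangle$ and $\langle W, \chi v^f\rangle$ and the integration-by-parts manipulation must be justified by duality rather than by literal integration. The clean way is to note that $\Box_\chi^{-1}$ and the solution operator for the backward equation are continuous on the relevant local Sobolev scales (as cited from \cite[Thm.~23.2.4]{Hormander3} in the excerpt), so one first proves the identity for smooth $W$, where all integrations by parts are classical, and then extends to general $W \in \D'(\R^{1+n})$ by density: approximate $W$ by smooth $W_j \to W$ in $H^\sigma_{loc}$, use continuity of $\Box_\chi^{-1}$ to get $u_j \to u$ in $H^{\sigma+1}_{loc}$, and pass to the limit in both sides of $\langle u_j, f\rangle = \langle W_j, \chi v^f\rangle$, using that $f$ and $\chi v^f$ are fixed test functions (note $\chi v^f \in C_0^\infty$ in time since $\chi$ vanishes for $t<0$ and $v^f$ vanishes for $t > S$, and it has compact spatial support by finite propagation speed together with compact support of $\kappa$ and $f$).

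Concretely I would carry out the steps in this order: (i) observe that $\chi v^f \in C_0^\infty(\R^{1+n})$, so the right-hand pairing is well-defined; (ii) for $W \in C_0^\infty$, write both $\langle u, f\rangle$ and $\langle \chi W, v^f\rangle$ as honest integrals over a large spacetime box $[0,S]\times B_R$ and integrate by parts twice in $t$ and twice in $x$, checking that every boundary contribution vanishes by the initial conditions on $(u,\partial_t u)$ at $t=0$, the final conditions on $(v^f, \partial_t v^f)$ at $t=S$, and finite speed of propagation in $x$; (iii) rewrite $\langle \chi W, v^f\rangle = \langle W, \chi v^f\rangle$; (iv) extend to arbitrary $W \in \D'(\R^{1+n})$ by the density/continuity argument above. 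The only real obstacle is the bookkeeping in step (ii)—making sure the backward solution $v^f$ genuinely has the support properties needed so that no boundary term survives and all integrals are over compact sets—but this is standard finite-speed-of-propagation reasoning for the wave equation on a complete non-trapping manifold.
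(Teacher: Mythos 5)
Your proposal is correct and follows essentially the same route as the paper: prove the identity for smooth $W$ by moving $\Box_g$ from $v^f$ to $u$ via integration by parts over $(0,S)\times\R^n$ (the boundary terms vanishing by the complementary initial/final conditions and finite speed of propagation), then extend to distributional $W$ by density and continuity of the solution operator. The paper's own proof is in fact terser than yours; your additional remarks on why $\chi v^f$ is a genuine test function and how the density argument is justified only make explicit what the paper leaves implicit.
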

\begin{proof}
Suppose that $W \in C_0^\infty(\R^{1+n})$.
The general case follows since test functions are dense in distributions. Next, let $v$ and $S$ be as in (\ref{eq_wave_test}).
Using the shorthand notation $\Box_g = \p_t^2 - \Delta_g$, we have that
\begin{align*}
\bra u, f \cet
=
\bra u, \Box_g v \cet_{L^2((0,S) \times \R^n)}
= 
\bra \Box_g u, v \cet_{L^2((0,S) \times \R^n)}
=
\bra W, \chi v \cet.
\end{align*}
This proves the claim.
\end{proof}

Let us recall the following result regarding the local energy decay which is due to Vainberg \cite{vainberg75,vainberg_book},
see \cite{vodev} for the formulation as below.
\begin{theorem}
\label{thm:led}
Let $u \in C^\infty((0,\infty) \times \R^{n})$ solve the problem
\begin{align*}
&\p_t^2 u - \Delta_g u = 0, \quad \text{in $(0,\infty) \times \R^{n}$},
\\
&u|_{t=0} = u_0, \quad \p_t u|_{t=0} = u_1.
\end{align*}
Suppose that $u_0$ and $u_1$
are compactly supported.
Suppose that $(\R^n,g)$ is non-trapping and that $g$ coincides with the Euclidean metric outside a compact set. 
Then there is $t_0 > 0$ such that $u$ satisfies
local energy decay
\begin{equation*}
	\int_{\R^n} \left( |\partial_t u(t,x)|^2 + |\nabla u(t,x)|^2 \right) \chi(x) dx \leq C \eta(t) E_0,	
	\quad t > t_0,
\end{equation*}
for any compactly supported function $\chi \in C_0^\infty(\R^n)$. Here we have
\begin{align*}
E_0 &= \int_{\R^{n}} |\nabla u_0(x)|^2  + |u_1(x)|^2 dx,
\quad
\eta(t) =
\begin{cases} 
e^{-b t}, & n\geq 3 \textrm{ odd}, \\
t^{-2n}, & n\geq 2 \textrm{ even}, 
\end{cases}
\end{align*}
and the constants $C,b>0$ depend on $g$, $\chi$ and the supports of $u_0$ and $u_1$.
\end{theorem}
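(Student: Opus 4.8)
The plan is to prove the local energy decay by Vainberg's method, which ties the large-time behaviour of $u$ to the analytic structure of the resolvent $R(\lambda) = (-\Delta_g - \lambda^2)^{-1}$ near and below the real axis. Since $-\Delta_g$ is self-adjoint and non-negative with purely absolutely continuous spectrum $[0,\infty)$ (there are no eigenvalues, as $g$ is Euclidean at infinity), I would first Fourier--Laplace transform the wave equation in time and write the cutoff solution through the stationary representation
$$
\chi u(t,\cdot) = \frac{1}{2\pi}\int_{\mathrm{Im}\,\lambda = c} e^{-i\lambda t}\, \chi R(\lambda)\bigl(u_1 - i\lambda u_0\bigr)\, d\lambda, \quad c > 0.
$$
The right-hand side is linear in the data and, by finite speed of propagation together with the compact support of $u_0,u_1$, its relevant part lives in a fixed compact set, so inserting cutoffs on either side of $R(\lambda)$ is harmless.

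Next I would establish the meromorphic continuation of the cutoff resolvent $\chi R(\lambda)\chi$ from $\{\mathrm{Im}\,\lambda>0\}$ across the real axis. Because $g$ agrees with the Euclidean metric outside a compact set, a gluing/parametrix construction (Lax--Phillips, or the black-box formalism of Sj\"ostrand--Zworski) reduces this to the classical continuation of the free resolvent $(-\Delta-\lambda^2)^{-1}$: to all of $\C$ when $n$ is odd, and to the logarithmic covering of $\C\setminus\{0\}$ when $n$ is even. The poles of the continued operator are the resonances.

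The heart of the argument, and the step I expect to be the main obstacle, is to convert the non-trapping hypothesis into a resonance-free region carrying good resolvent bounds. By the propagation-of-singularities estimates of Melrose--Sj\"ostrand (equivalently, by semiclassical defect-measure arguments), non-trapping forces energy to escape every compact set, and this yields, for odd $n$, a genuine strip $\{\mathrm{Im}\,\lambda \ge -b\}$, $b>0$, free of resonances in which the non-trapping bound $\norm{\chi R(\lambda)\chi} \lesssim |\lambda|^{-1}$ holds at high frequency, with the expected gains in the Sobolev scale. For even $n$ the branch point at the origin obstructs such a strip; one instead controls $\chi R(\lambda)\chi$ on the logarithmic cover, the decisive additional input being the low-frequency expansion of the resolvent near $\lambda=0$.

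Finally I would deform the contour in the representation downward into the resonance-free region to extract the rate. For odd $n$, shifting to $\mathrm{Im}\,\lambda = -b'$ with $0<b'<b$ contributes the uniform factor $e^{-b't}$; the non-trapping resolvent bound, together with the smoothing of $R(\lambda)$, makes the shifted integral convergent and yields the exponential rate $\eta(t)=e^{-bt}$. For even $n$ the deformed contour must wrap the branch cut at the origin, and the leading contribution, governed by the low-frequency expansion, produces the algebraic rate $\eta(t)=t^{-2n}$. To pass from decay of $\chi u$ to the stated bound on the local energy $\int_{\R^n}(|\p_t u|^2 + |\nabla u|^2)\chi\,dx$, I would use that $\p_t$ corresponds to multiplication by $-i\lambda$ under the representation, while $\nabla u$ is controlled by elliptic regularity from $\Delta_g u = \p_t^2 u$; the same contour estimates, applied with one extra power of $\lambda$, then give the energy bound with a constant $C$ depending on $g$, on $\chi$, and on the supports of $u_0,u_1$ through the cutoffs used in the resolvent estimates.
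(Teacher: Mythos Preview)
The paper does not prove this theorem at all: it is quoted as a known result due to Vainberg \cite{vainberg75,vainberg_book}, with the stated formulation attributed to \cite{vodev}, and is used as a black box in the subsequent lemmas. Your outline is a faithful sketch of Vainberg's method (meromorphic continuation of the cutoff resolvent via a compact-perturbation parametrix, a resonance-free region from the non-trapping hypothesis, and contour deformation with the low-frequency analysis at the branch point in even dimensions), so there is no discrepancy in approach---only that the paper defers entirely to the literature rather than reproducing any of these steps.
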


We need a decay estimate for the norm $\norm{u(t,\cdot)}_{L^2(K)}$
where $K \subset \R^n$ is compact.

\begin{lemma}
\label{lem:led}
Let $(\R^n,g)$ be as in Theorem \ref{th_inner_prods_by_ergodicity}
and let $u$ be as in Theorem \ref{thm:led}. 
Let $K \subset \R^n$ be compact.
Then there is $t_0 > 0$ such that $u$ satisfies
\begin{equation*}
\norm{u(t,\cdot)}_{L^2(K)} \le C \mu(t) E_0, \quad t > t_0,
\end{equation*}
where
\begin{align}
\label{eq:myy}
\mu(t) =
\begin{cases} 
e^{-b t}, & n\geq 3 \textrm{ odd}, \\
t^{-2n+1}, & n\geq 4 \textrm{ even}, 
\end{cases}
\end{align}
\end{lemma}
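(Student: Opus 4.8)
The plan is to combine the local energy decay of Theorem~\ref{thm:led} with an ``integration in time'' of $\p_t u$. Fix $\chi\in C_0^\infty(\R^n)$ with $\chi=1$ near $K$. By Theorem~\ref{thm:led}, for $t>t_0$ the quantity $\norm{\p_t u(t,\cdot)}_{L^2(K)}$ is governed by the decay $\eta$; in particular $s\mapsto\norm{\p_s u(s,\cdot)}_{L^2(K)}$ is integrable on $[t_0,\infty)$ when $n\ge3$, and the same holds with $K$ replaced by any ball. Hence $s\mapsto u(s,\cdot)$ is Cauchy in $L^2_{loc}(\R^n)$ as $s\to\infty$, so $u(t,\cdot)\to L$ in $L^2_{loc}(\R^n)$ for some $L\in L^2_{loc}(\R^n)$. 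Granting that $L=0$, the lemma follows at once: writing $u(t,\cdot)=-\int_t^\infty\p_s u(s,\cdot)\,ds$ in $L^2(K)$ gives $\norm{u(t,\cdot)}_{L^2(K)}\le\int_t^\infty\norm{\p_s u(s,\cdot)}_{L^2(K)}\,ds$, and inserting the bound of Theorem~\ref{thm:led} and evaluating the elementary integral produces the rate $\mu(t)$ in \eqref{eq:myy}.

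So the heart of the matter is to show $L=0$, and this is where $n\ge3$ enters. By finite speed of propagation, $u(t,\cdot)$ is supported in a ball $B(0,R(t))$ with $R(t)\le C(1+t)$; and since $\norm{\p_t u(t,\cdot)}_{L^2(\R^n)}$ is bounded by $CE_0^{1/2}$ (conservation of the $g$-energy, which is $\le CE_0$), the bound $\frac{d}{dt}\norm{u(t,\cdot)}_{L^2(\R^n)}\le\norm{\p_t u(t,\cdot)}_{L^2(\R^n)}$ yields $\norm{u(t,\cdot)}_{L^2(\R^n)}\le C(1+t)E_0^{1/2}$. Now fix an arbitrary $\psi\in C_0^\infty(\R^n)$. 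Because $n\ge3$ and $g$ equals the Euclidean metric outside a compact set, $\Delta_g$ has a Green's function on $\R^n$ with the Euclidean decay $\sim|x-y|^{2-n}$, so there is $\psi_1\in C^\infty(\R^n)$ with $\Delta_g\psi_1=\psi$ and $|\psi_1(x)|\le C\langle x\rangle^{2-n}$; consequently $\norm{\psi_1}_{L^2(B(0,R))}=o(R)$ as $R\to\infty$ (it is $O(1)$ for $n\ge5$, $O(\sqrt{\log R})$ for $n=4$, and $O(\sqrt R)$ for $n=3$). Put $a(t)=\pair{u(t,\cdot),\psi}_{L^2(\R^n,dV_g)}$ and $a_1(t)=\pair{u(t,\cdot),\psi_1}_{L^2(\R^n,dV_g)}$. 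Differentiating under the integral sign and using $\p_t^2u=\Delta_g u$ together with Green's identity (there are no boundary terms, since $u(t,\cdot)$ is compactly supported), $a_1''(t)=\pair{\Delta_g u(t,\cdot),\psi_1}=\pair{u(t,\cdot),\Delta_g\psi_1}=\pair{u(t,\cdot),\psi}=a(t)$, hence $a_1''(t)\to\pair{L,\psi}_{L^2(\R^n,dV_g)}$ as $t\to\infty$. On the other hand, Cauchy--Schwarz and the support bound give $|a_1(t)|\le\norm{u(t,\cdot)}_{L^2(B(0,R(t)))}\norm{\psi_1}_{L^2(B(0,R(t)))}\le C(1+t)\cdot o(t)=o(t^2)$. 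If $\pair{L,\psi}\neq0$, then $a_1''$ converging to a nonzero constant forces $|a_1(t)|\gtrsim t^2$, a contradiction; therefore $\pair{L,\psi}_{L^2(\R^n,dV_g)}=0$ for every $\psi\in C_0^\infty(\R^n)$, so $L=0$ and $u(t,\cdot)\to0$ in $L^2(K)$, as needed.

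The main obstacle is precisely establishing $L=0$: Theorem~\ref{thm:led} controls only $\p_t u$ and $\nabla u$ on compact sets and does not by itself rule out a nonzero locally $L^2$ limit of $u(t,\cdot)$. The restriction $n\ge3$ is used exactly here --- through the existence and Euclidean decay of the Green's function of $\Delta_g$ and the ensuing sublinear growth of $\norm{\psi_1}_{L^2(B(0,R))}$, which fails when $n=2$. (Alternatively, the decay $\norm{u(t,\cdot)}_{L^2(K)}\to0$ may be quoted from scattering theory: a non-trapping metric that is Euclidean outside a compact set has, for $n\ge3$, no eigenvalue or zero-resonance of $-\Delta_g$, so the local evolution decays in every Sobolev norm.) Everything else --- the integrability of $\norm{\p_s u(s,\cdot)}_{L^2(K)}$, the Cauchy argument, and the final integration --- is routine; one could equally well replace the Cauchy step by a Poincar\'e inequality on a ball $B\supset K$, reducing $u(t,\cdot)$ to its mean $\bar u(t)$, and then prove $\bar u(t)\to0$ by the same Green's-function argument.
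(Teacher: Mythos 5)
Your proposal is correct in its essentials, but it takes a genuinely different route from the paper at the step you rightly single out as the heart of the matter, namely showing that the large-time limit of $u(t,\cdot)$ vanishes. The paper first forces the limit to be a \emph{constant} $\bar u$: it averages $u(t,\cdot)$ over Euclidean balls, controls the time derivative of the average and the oscillation $u-u_r$ via Theorem \ref{thm:led} and the Poincar\'e--Wirtinger inequality, and then kills $\bar u$ by an elementary volume-growth comparison --- the Gagliardo--Nirenberg--Sobolev inequality (this is where $n\ge 3$ enters) plus conservation of energy give $\|u(t,\cdot)\|_{L^2(B(r))}^2\le Cr^{n/q}$ with $n/q<n$, which is incompatible with $\|\bar u\|_{L^2(B(r))}^2\sim r^n$ unless $\bar u=0$. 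Your argument instead handles a general $L^2_{loc}$ limit $L$ by testing against $\psi_1$ with $\Delta_g\psi_1=\psi$ and playing $a_1''(t)\to\langle L,\psi\rangle$ against $|a_1(t)|=o(t^2)$; this is valid, but it imports as a black box the existence of a Green's function for $\Delta_g$ with Euclidean decay $|x-y|^{2-n}$ (Littman--Stampacchia--Weinberger for divergence-form operators, or absence of zero eigenvalues/resonances), whereas the paper's route is self-contained --- note that both uses of $n\ge3$ are really the same non-parabolicity/Sobolev phenomenon, so neither approach is more general here. One caveat you share with the paper: Theorem \ref{thm:led} bounds the \emph{square} of the local energy by $C\eta(t)E_0$, so the honest bound is $\|\p_s u(s,\cdot)\|_{L^2(K)}\le C\eta(s)^{1/2}E_0^{1/2}$, and integrating in time gives $t^{-n+1}$ rather than $t^{-2n+1}$ for even $n$ (the paper's own line $|\p_t u_r(t)|\le C\eta(t)$ makes the same slip); this only changes the exponent in \eqref{eq:myy}, not the validity of the lemma or its later applications.
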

\begin{proof}
To simplify the notation, we assume without loss of generality that $E_0 =1$. Let $B(r) = \{\|x\| < r\}$ be the Euclidean ball of radius $r$ and write
$$
u_r(t) = \frac 1{|B(r)|} \int_{B(r)} u(t,x) dx,
$$
where $|B(r)|$ is the volume of $B(r)$.
Theorem \ref{thm:led} implies
$
|\p_t u_r(t)| \le C \eta(t) 
$
where the constant $C > 0$ depends on $r > 0$ and $g$.
Thus for $t_0 < t < s$,
\begin{align}
\label{ed_step1}
|u_r(t) - u_r(s)| \le C \int_t^s \eta(\tau) d\tau = C (\mu(t) - \mu(s)).
\end{align}
We see that $\lim_{t \to \infty} u_r(t)$
exists, and denote the limit by $\bar u(r)$.

The Poincar\'e-Wirtinger  inequality
\begin{align*}
\|u(t,\cdot)-u_r(t)\|_{L^2(B(r))} \leq C \|\nabla u(t,\cdot)\|_{L^2(B(r))},
\end{align*}
together with Theorem \ref{thm:led} and (\ref{ed_step1}), implies that 
\begin{align}
\label{ed_step2}
\|u(t,\cdot)- \bar u(r)\|_{L^2(B(r))}
\le C \eta(t) + |u_r(t)- \bar u(r)|\|1\|_{L^2(B(r))}
\le C \mu(t).
\end{align}
In particular, for $0 < r_1 < r_2$, 
$u(t,\cdot) \to \bar u(r_j)$, $j=1,2$, in $L^2(B(r_1))$.
Thus $\bar u(r)$ does not depend on $r > 0$ and we denote it by $\bar u$.

It remains to show that $\bar u = 0$.
As $u(t)$ is compactly supported, by the finite speed of propagation,
the Gagliardo-Nirenberg-Sobolev inequality implies that
$$
\norm{u(t,\cdot)}_{L^{p*}(\R^n)} \le C \norm{\nabla u(t,\cdot)}_{L^2(\R^n)},
$$
where $p^*$ is the Sobolev conjugate of $2$, that is, 
$1/p^* = 1/2 - 1/n$.
Note that $p^* > 2$. We apply H\"older's inequality with $p=p^*/2$
and $1/p+1/q=1$,
$$
\int_{B(r)} u^2(t,\cdot) dx \le \norm{u^2(t,\cdot)}_{L^p(B(r))} \norm{1}_{L^q(B(r))}.
$$
The conservation of energy implies that $\norm{\nabla u(t,\cdot)}_{L^2(\R^n)}$, $t > 0$, is bounded. Thus
$\norm{u(t,\cdot)}_{L^2(B(r))}^2 \le C r^{n/q}$
with a constant $C > 0$ independent of $r$.

To get a contradiction, suppose now that $\bar u \ne 0$. Then there is $\epsilon > 0$ such that 
$$\norm{\bar u}_{L^2(B(r))}^2 =\bar{u}^2 \norm{1}_{L^2(B(r))}^2= 2 \epsilon r^n.$$
By the convergence (\ref{ed_step2}), 
for all $r > 0$ there is $t_r$ such that $\norm{u(t_r,\cdot)}_{L^2(B(r))}^2 \ge \epsilon r^n$.
Thus $r^{n-n/q} \le C$, $r > 0$, which is a contradiction since $q > 1$. 
\end{proof}

\begin{lemma}
\label{lem:led2}
Let $(\R^n,g)$ be as in Theorem \ref{th_inner_prods_by_ergodicity}.
Suppose that $K \subset \R^n$ is compact and $f\in C^\infty_0(\R^n)$. Let $u \in C^\infty((0,\infty) \times \R^{n})$ solve the problem
\begin{align*}
&\p_t^2 u - \Delta_g u = f, \quad \text{in $(0,\infty) \times \R^{n}$},
\\
&u|_{t=0} = \p_t u|_{t=0} =0.
\end{align*}
Then there exists $t_0>0$ such that for all $t>t_0$ 
$$
\|u(t,\cdot)\|_{L^2(K)}\leq C \mu(t)\|f\|_{L^2(\R^{1+n})},
$$
where $\mu(t)$ is defined in \eqref{eq:myy}.
Here the constants $C$ and $t_0$ depend on $g$, $K$ and the support of $f$.
\end{lemma}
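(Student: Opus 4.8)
The plan is to reduce the inhomogeneous problem with a time-independent source $f \in C_0^\infty(\R^n)$ to the homogeneous situation covered by Lemma \ref{lem:led}, by splitting off the stationary part of the solution. First I would let $u_\infty$ solve the elliptic problem $-\Delta_g u_\infty = f$ on $(\R^n, g)$; since $(\R^n, g)$ is Euclidean outside a compact set and $f$ is compactly supported, such a $u_\infty$ exists (e.g. by composing $f$ with the Green kernel of $-\Delta_g$, which behaves like the Euclidean one at infinity for $n \ge 3$), and $u_\infty$ together with $\nabla u_\infty$ decays at spatial infinity, so in particular $\nabla u_\infty \in L^2(\R^n)$ and $u_\infty \in L^2_{loc}$. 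Then $w := u - u_\infty$ solves the homogeneous wave equation $\p_t^2 w - \Delta_g w = 0$ with Cauchy data $w|_{t=0} = -u_\infty$, $\p_t w|_{t=0} = 0$.

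The obstacle is that $u_\infty$ is \emph{not} compactly supported, so Lemma \ref{lem:led} does not apply verbatim — it requires compactly supported Cauchy data. To handle this I would cut $u_\infty$ with a smooth cutoff $\zeta \in C_0^\infty(\R^n)$ equal to $1$ on a large ball $B(\rho)$ that contains $K$ and $\supp f$, and write $w = w_1 + w_2$, where $w_1$ solves the homogeneous equation with compactly supported data $(-\zeta u_\infty, 0)$ and $w_2$ solves it with data $(-(1-\zeta) u_\infty, 0)$. For $w_1$, Lemma \ref{lem:led} gives $\|w_1(t,\cdot)\|_{L^2(K)} \le C \mu(t) E_0$ with $E_0 = \|\nabla(\zeta u_\infty)\|_{L^2}^2 + 0 \lesssim \|f\|_{L^2}^2$ (using elliptic estimates $\|u_\infty\|_{H^1(B(\rho'))} \lesssim \|f\|_{L^2}$ on a slightly larger ball). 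For $w_2$, the Cauchy data is supported outside $B(\rho)$, so by finite speed of propagation $w_2(t,\cdot)$ vanishes on $K$ for $t \le \rho - \operatorname{diam} K$ — but for large $t$ this is exactly where the difficulty reappears, since $(1-\zeta)u_\infty$ has unbounded support.

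To close this, I would instead exploit that $u_\infty$ is harmonic (with respect to $\Delta_g$, hence Euclidean-harmonic) outside $\supp f$, so $w_2$ stays small on $K$ by a domain-of-influence / finite-speed argument combined with the decay of $u_\infty$ itself: the value of $w_2$ on $K$ at time $t$ depends only on the data in the Euclidean ball of radius $t + \operatorname{diam} K$ around $K$, and since $-(1-\zeta)u_\infty$ is bounded in $L^2$ of annuli with the $n\ge 3$ Green-kernel decay $|u_\infty(x)| \lesssim \langle x\rangle^{2-n}\|f\|_{L^2}$, a direct energy estimate on the exterior region (where the metric is Euclidean and energy is exactly conserved) gives $\|w_2(t,\cdot)\|_{L^2(K)} \lesssim \|f\|_{L^2}$ uniformly, and in fact one can do slightly better by cutting at the growing scale $\rho = \rho(t) \sim t/2$: then $E_0$ for the $w_1$ piece grows only polynomially in $t$ while $\mu(t)$ for $w_1$ is (for $n$ odd) exponentially decaying and (for $n$ even) decays like $t^{-2n+1}$, and $w_2$ vanishes on $K$ outright for the relevant time range. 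Balancing these — polynomial growth of $E_0$ against $\mu$, and the finite-speed vanishing of $w_2$ — yields $\|u(t,\cdot)\|_{L^2(K)} \le C\mu(t)\|f\|_{L^2(\R^{1+n})}$ for $t > t_0$, with $C, t_0$ depending only on $g$, $K$ and $\supp f$, as claimed. The main technical point to watch is that the growing cutoff radius does not degrade the rate in \eqref{eq:myy}; since the dependence of the constant in Lemma \ref{thm:led} on the support radius is at worst polynomial and $\mu$ beats any polynomial in the odd case and we have margin ($t^{-2n}$ vs.\ the claimed $t^{-2n+1}$) in the even case, this is affordable.
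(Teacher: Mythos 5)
There is a genuine gap, and it starts with the reading of the hypothesis. Although the statement literally says $f\in C_0^\infty(\R^n)$, the norm $\norm{f}_{L^2(\R^{1+n})}$ on the right-hand side and the way the lemma is used in Lemma \ref{lem:conv_of_expec} (where it is applied to $v^f$ with $f\in C_0^\infty((0,S)\times\R^n)$) show that the intended hypothesis is a source compactly supported in \emph{space-time}. For a genuinely time-independent compactly supported source the claimed estimate is in fact false: the solution converges locally to the stationary solution $u_\infty$ of $-\Delta_g u_\infty=f$ (in $\R^3$ this is explicit from Kirchhoff's formula), and $u_\infty|_K\neq 0$ in general, so $\norm{u(t,\cdot)}_{L^2(K)}$ does not tend to zero. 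Your own argument exhibits exactly this problem: you decompose $u=u_\infty+w_1+w_2$ and then estimate only $w_1$ and $w_2$ on $K$, so at best you obtain a bound on $\norm{u(t,\cdot)-u_\infty}_{L^2(K)}$; the term $u_\infty|_K$ is never reabsorbed, and the stated conclusion $\norm{u(t,\cdot)}_{L^2(K)}\le C\mu(t)\norm{f}$ does not follow from your steps. A secondary issue is that your growing-cutoff device ($\rho(t)\sim t/2$) requires the constants $C,b$ in Theorem \ref{thm:led} to degrade at most polynomially in the support radius of the Cauchy data; the theorem as cited only says the constants \emph{depend} on the supports, so this step is not justified either.

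Under the intended hypothesis the lemma is much easier, and this is the paper's proof: choose $t_1$ with $\supp(f)\subset[0,t_1]\times\R^n$; for $t>t_1$ the solution $u$ coincides with the solution of the homogeneous equation with Cauchy data $(u(t_1,\cdot),\p_t u(t_1,\cdot))$ at time $t_1$, which are compactly supported by finite speed of propagation, so Lemma \ref{lem:led} applies directly; the standard energy estimate then gives $E_0\le C\norm{f}_{L^2(\R^{1+n})}^2$. No stationary solution, cutoff, or exterior harmonicity argument is needed.
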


\begin{proof}
Let $t_1>0$ be such that supp$(f)\subset [0,t_1]\times \R^n$. By the finite speed of wave propagation, it holds that supp$(u|_{t=t_1})$ and supp$(\p_tu|_{t=t_1})$ are compact in $\R^n$. Consider the solution $v$ of the initial value problem 
\begin{align*}
&\p_t^2 v - \Delta_g v = 0, \quad \text{in $(t_1,\infty) \times \R^{n}$},
\\
&v|_{t=t_1} =u(t_1),\: \p_t v|_{t=t_1} =\p_tu|_{t=t_1}.
\end{align*}
By the uniqueness, it must hold that $v=u$. By Lemma \ref{lem:led} there exists $t_0>t_1$ and constant $C$ independent of $t> t_0$ such that 
\begin{equation*}
\norm{u(t,\cdot)}_{L^2(K)} \le C \mu(t) E_0, \quad t > t_0,
\end{equation*}
Where $E_0=\int_{\R^{n}} |\nabla u(t_1,\cdot)|^2  + |\p_t u(t_1,\cdot)|^2 dx$. As $u$ is an energy class solution of a wave equation with zero initial values, by the standard energy estimates for the wave equation it holds that
$$
E_0\leq C\|f\|^2_{L^2(\R^{1+n})}.
$$
This proves the claim.
\end{proof}

\begin{lemma}
\label{lem:conv_of_expec}
Let $(\R^n,g)$ be as in Theorem \ref{th_inner_prods_by_ergodicity}.
Let $S>0$ and $f,h \in C_0^\infty((0,S) \times \R^n)$. It follows that
\begin{equation*}
	\lim_{T \to \infty} \expec \bra C_T, f \otimes h\cet
	= \bra \kappa v^f, \kappa v^h\cet_{L^2((-\infty, S) \times \R^n)}.
\end{equation*}
\end{lemma}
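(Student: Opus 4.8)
The plan is to compute $\expec \bra C_T, f \otimes h\cet$ explicitly, recognize it as a time-average of a correlation that should converge, and then identify the limit using the local energy decay estimate of Lemma \ref{lem:led2}. First I would use the definition of $C_T$ together with the Pettis integral and Fubini's theorem to write
$$
\expec \bra C_T, f\otimes h\cet = \frac{1}{T}\int_0^T \expec\bra \tau^s u \otimes \tau^s u, f\otimes h\cet\, ds,
$$
where $u = \Box_\chi^{-1}(\W)$. Since $\tau^s$ commutes with nothing problematic here, $\bra \tau^s u, f\cet = \bra u, \tau^{-s} f\cet$ and similarly for $h$, so the integrand is $\expec\big(\bra u, \tau^{-s}f\cet \bra u, \tau^{-s}h\cet\big)$. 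Now I would apply Lemma \ref{lem:inner poduct of white noise} to each factor, giving $\bra u, \tau^{-s}f\cet = \bra \W, \chi v^{\tau^{-s}f}\cet$, and by time-translation invariance of the wave equation with time-independent $\Delta_g$ (and the structure of $\chi_0$ once $s$ is large enough that $\chi_0 \equiv 1$ on the relevant time interval) one has $v^{\tau^{-s}f} = \tau^{-s} v^f$ up to the behavior of $\chi_0$ near $t=0$; I should be careful here and keep $\chi = \chi_0(t)\kappa(x)$ as is. Then, using that $\W$ is white noise with covariance the identity, $\expec\big(\bra \W, g_1\cet\bra\W, g_2\cet\big) = \bra g_1, g_2\cet_{L^2(\R^{1+n})}$, so the integrand becomes $\bra \chi v^{\tau^{-s}f}, \chi v^{\tau^{-s}h}\cet_{L^2(\R^{1+n})}$.

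Next I would reorganize this as follows. Write $w^f(s,\cdot) := v^{\tau^{-s}f}$; this is the time-reversed wave solution with source supported near time $s + S$, so as $s\to\infty$ its support moves off to late times. The key observation is that $\chi = \chi_0\kappa$ and $\chi_0(t) = 1$ for $t\ge 1$, so for $s$ large $\chi(t,x) v^{\tau^{-s}f}(t,x) = \kappa(x) v^{\tau^{-s}f}(t,x)$ on the support of $v^{\tau^{-s}f}$, whence the integrand equals $\bra \kappa v^{\tau^{-s}f}, \kappa v^{\tau^{-s}h}\cet_{L^2}$, which by translation invariance equals $\bra \kappa v^f, \kappa v^h\cet_{L^2}$ exactly — independent of $s$. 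Thus for $s$ past some threshold $s_0$, the integrand is constant and equal to the claimed limit, while for $s \in (0, s_0)$ the integrand is bounded (the map $s\mapsto \bra \chi v^{\tau^{-s}f}, \chi v^{\tau^{-s}h}\cet$ is continuous, even smooth, as established before the statement). Therefore
$$
\frac{1}{T}\int_0^T (\cdots)\, ds = \frac{1}{T}\int_0^{s_0}(\cdots)\,ds + \frac{T-s_0}{T}\,\bra \kappa v^f, \kappa v^h\cet_{L^2},
$$
and letting $T\to\infty$ the first term vanishes and the second converges to $\bra \kappa v^f, \kappa v^h\cet_{L^2}$. Finally, since $f,h$ are supported in $(0,S)\times\R^n$, the solutions $v^f, v^h$ are supported in $(-\infty, S)\times\R^n$, so the $L^2(\R^{1+n})$ inner product coincides with the $L^2((-\infty,S)\times\R^n)$ inner product, giving the stated form.

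The main obstacle I anticipate is the rigorous handling of the white-noise covariance identity $\expec(\bra\W, g_1\cet\bra\W, g_2\cet) = \bra g_1, g_2\cet_{L^2}$ when $g_1 = \chi v^f$ is not a Schwartz or compactly supported function but merely an energy-class wave solution multiplied by the compactly-supported-in-space cutoff $\kappa$ (times $\chi_0$, which does not decay in time). Here Lemma \ref{lem:led2} is essential: it gives $\|v^f(t,\cdot)\|_{L^2(K)} \le C\mu(t)$ with $\mu$ integrable (since $n\ge 3$, either exponential decay or $t^{-2n+1}$ with $2n-1 > 1$), so $\chi v^f = \chi_0\kappa v^f \in L^2(\R^{1+n})$ genuinely, and one can extend the white-noise pairing and the covariance formula from test functions to this $L^2$ function by a density/continuity argument — approximating $\chi v^f$ in $L^2(\R^{1+n})$ by functions in $C_0^\infty$ and using that $\bra\W, \cdot\cet$ extends to an isometry $L^2(\R^{1+n})\to L^2(\Omega)$. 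A secondary technical point is justifying the interchange of $\expec$ and the Pettis integral over $s\in(0,T)$, which follows from Fubini once one checks the integrand is jointly measurable and dominated (again using Lemma \ref{lem:led2} for an $s$-uniform bound on the relevant interval).
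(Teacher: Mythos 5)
Your overall architecture --- Fubini to bring the expectation inside the $s$-integral, Lemma \ref{lem:inner poduct of white noise} to rewrite $\bra u^s,f\cet$ as a white-noise pairing against $\chi v^{\tau^{-s}f}$, the covariance identity to turn the integrand into $\bra\chi v^{\tau^{-s}f},\chi v^{\tau^{-s}h}\cet_{L^2}$, and the extension of these pairings to $L^2$ integrands --- matches the paper's proof. But your key step is wrong: you claim that for $s$ large one has $\chi v^{\tau^{-s}f}=\kappa v^{\tau^{-s}f}$ ``on the support of $v^{\tau^{-s}f}$'', so that the integrand is \emph{exactly} constant for $s\ge s_0$. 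This is false. The source $\tau^{-s}f$ does move to late times, but $v^{\tau^{-s}f}=\tau^{-s}v^f$ is the time-\emph{reversed} solution: it vanishes for $t>s+S$ and propagates backward into all of $(-\infty,s+S)$. Its support therefore always meets the fixed region $\{t<1\}$ where $\chi_0\neq 1$, so $\chi v^{\tau^{-s}f}$ and $\kappa v^{\tau^{-s}f}$ differ by $(1-\chi_0)\kappa\, v^{\tau^{-s}f}$ for every $s$. The integrand converges to $\bra\kappa v^f,\kappa v^h\cet$ but is not eventually equal to it, and your two-term splitting of the Ces\`aro average does not close the proof as written.

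The repair is precisely where the paper deploys Lemma \ref{lem:led2} (you invoke it only to justify $\chi v^f\in L^2(\R^{1+n})$, which is indeed also needed, but not for this error term). Write $\chi^s=\kappa-(1-\chi_0^s)\kappa$ and note $\supp(1-\chi_0^s)\subset(-\infty,1-s)$; since $v^f$ on $\{t<1-s\}$ is, after reversing time, a forward wave evaluated at times $\gtrsim s$, Lemma \ref{lem:led2} gives $\norm{v^f}_{L^2((-\infty,1-s)\times\supp(\kappa))}\le C s^{-2n+\frac32}\norm{f}_{L^2(\R^{1+n})}$ for large $s$. Hence the integrand equals $\bra\kappa v^f,\kappa v^h\cet+R(s)$ with $|R(s)|\le Cs^{-2n+\frac32}$, and $\frac 1T\int_0^T|R(s)|\,ds\to 0$ as $T\to\infty$, which is what actually finishes the argument. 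With this substitution (and your correct remarks on Fubini and on extending the white-noise isometry to $L^2$), the proof goes through and coincides with the paper's.
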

\begin{proof}
Here we will use notation $f^s(t,x)=f(t+s,x)$ for a time sift $s\in \R$. By the Lemma \ref{lem:inner poduct of white noise} and standard energy estimates, we have
\begin{equation*}
	\expec \bra u^s, f\cet^2 = \expec \bra \W^s, \chi^s v^f\cet^2 = \bra \chi^s v^f, \chi^s v^f\cet \leq C \norm{f}^2_{L^2(\R^{1+n})}, \quad s<T,
\end{equation*}
where the constant $C$ depends on $T$.
Therefore, we see that the mapping
\begin{equation*}
	(\omega, s) \to  \bra u^s(\omega), f\cet \bra u^s(\omega), h\cet
\end{equation*}
is integrable on $\Omega \times (0,T)$ with respect to $\P \times {\rm d}t$.
In consequence, together with \eqref{eq:general_covariance} the Fubini theorem yields
\begin{equation*}
	\expec \bra C_T, f \otimes h \cet
	= \frac 1 T \int_0^T \expec \bra \chi^s \W^s, v^f\cet \bra \chi^s \W^s, v^h\cet ds 
	= \frac 1 T \int_0^T \bra \chi^s v^f,  \chi^s v^h\cet ds.
\end{equation*}
For the time-shifted characteristic function we have 
$$
\chi^s(t,x) = \chi_0^s(t) \kappa(x) = \kappa(x) - (1-\chi_0^s(t))\kappa(x)
$$
and $\supp(1-\chi_0^s) \subset (-\infty, 1-s)$.
By the local energy decay in Lemma \ref{lem:led2}, there is a constant $C > 0$
depending on $g$ and the supports of $\kappa$ and $f$ such that 
$$
\norm{v^f}_{L^2((-\infty, 1-s) \times \supp(\kappa))} 
\leq C \left(\int_{-\infty}^{1-s} |t|^{-4n+2} dt\right)^{1/2} \norm{f}_{L^2(\R^{1+n})}
\leq C s^{-2n+\frac 32} \norm{f}_{L^2(\R^{1+n})},
$$
for large $s$.
Hence we obtain
$$
\expec \bra C_T, f \otimes h \cet = \bra \kappa v^f,  \kappa v^h\cet + \frac 1 T \int_0^T R(s) ds,
$$
where 
$$
|R(s)| 
\le C s^{-2n+\frac 32} \norm{f}_{L^2(\R^{1+n})} \norm{h}_{L^2(\R^{1+n})}
$$
To conclude, one has
$$
\frac 1 T \int_0^T |R(s)| ds \le C T^{-2n+\frac 32}
$$
and the claim follows.
\end{proof}

In order to show the statistical stability of the data, we need the following result from ergodic theory (see e.g. \cite[p. 94]{CramerLeadbetter}):

\begin{theorem}\sl
\label{thm:aux_ergodicity}
Let $\widetilde{Z}_t$, $t\geq 0$, be a real-valued random variables such that $\expec \widetilde{Z}_t = 0$ and the covariance function 
$(t,s) \mapsto \expec (\widetilde{Z}_t \widetilde{Z}_{s})$, $t,s\geq 0$, is continuous. Assume that for some constants $c,\epsilon>0$ the condition
\begin{equation*}
	|\expec (\widetilde{Z}_t \widetilde{Z}_{t+r})| \leq c(1+r)^{-\epsilon}
\end{equation*}
holds for all $t \geq 0 $ and $r \geq 0$. Then,
\begin{equation*}
	\lim_{T\to\infty} \frac 1 {T} \int_{0}^T \widetilde{Z}_t \, dt = 0 \quad \text{almost surely}.
\end{equation*}
\end{theorem}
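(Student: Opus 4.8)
The plan is to deduce the almost-sure convergence from a second-moment bound on the running average, combined with a subsequence argument. Set
$Y_T = \frac 1T\int_0^T \widetilde Z_t\,dt$; this is a bona fide random variable, and Fubini applies to all the integrals below, once one records that $(\omega,t)\mapsto \widetilde Z_t(\omega)$ may be taken jointly measurable (in the setting where the theorem is applied it is even continuous in $t$). Since for $s\le t$ one has $\expec(\widetilde Z_t\widetilde Z_s)=\expec(\widetilde Z_s\widetilde Z_{s+(t-s)})$, the hypothesis gives $|\expec(\widetilde Z_t\widetilde Z_s)|\le c(1+|t-s|)^{-\epsilon}$ for all $s,t\ge 0$, and the first step is the estimate
$$
\expec Y_T^2=\frac 1{T^2}\int_0^T\!\!\int_0^T\expec(\widetilde Z_t\widetilde Z_s)\,dt\,ds\le \frac{2c}{T}\int_0^T(1+r)^{-\epsilon}\,dr\le C(1+T)^{-\delta},
$$
where $\delta=\min(\epsilon,1)$ when $\epsilon\neq 1$, and $\delta$ is any fixed number in $(0,1)$ when $\epsilon=1$ (in the borderline case the logarithm is absorbed at the cost of a slightly smaller exponent). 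In particular $\expec Y_T^2\to 0$.

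Next I would pass to the subsequence $T_k=k^{2/\delta}$, for which $\expec Y_{T_k}^2\le Ck^{-2}$ is summable; hence $\sum_k\expec Y_{T_k}^2<\infty$, so $\sum_k Y_{T_k}^2<\infty$ almost surely, and in particular $Y_{T_k}\to 0$ almost surely. To control $Y_T$ for $T_k\le T\le T_{k+1}$, write $T Y_T=T_k Y_{T_k}+\int_{T_k}^T\widetilde Z_t\,dt$, so that
$$
|Y_T|\le |Y_{T_k}|+\frac{M_k}{T_k},\qquad M_k:=\int_{T_k}^{T_{k+1}}|\widetilde Z_t|\,dt.
$$
Cauchy--Schwarz and the hypothesis at $r=0$ give $\expec|\widetilde Z_t\widetilde Z_s|\le(\expec\widetilde Z_t^2)^{1/2}(\expec\widetilde Z_s^2)^{1/2}\le c$, whence $\expec M_k^2\le c(T_{k+1}-T_k)^2$. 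Since $T_{k+1}-T_k=O(k^{2/\delta-1})$ we have $(T_{k+1}-T_k)/T_k=O(1/k)$, so $\expec (M_k/T_k)^2=O(k^{-2})$ is summable and therefore $M_k/T_k\to 0$ almost surely. On the almost-sure event where $Y_{T_k}\to 0$ and $M_k/T_k\to 0$ we conclude $\limsup_{T\to\infty}|Y_T|=0$, which is the claim.

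The main obstacle I expect is precisely the passage from convergence along the discretization $(T_k)$ to convergence of the whole family $(Y_T)_{T>0}$: the subsequence must grow fast enough that $\sum_k\expec Y_{T_k}^2<\infty$, and the gap-correction term $M_k/T_k$ must be tamed by its own second-moment estimate rather than handled for free. A secondary point worth stressing is that the hypotheses concern a possibly non-stationary family, so the spectral-measure techniques available for weakly stationary processes are not at hand; only the elementary covariance bound above is available, but it suffices. The measurability of the process and the repeated appeals to Fubini and Tonelli are routine in the application and need only be recorded.
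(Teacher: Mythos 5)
Your proof is correct. Note that the paper does not prove this statement at all: it is quoted as a known result from ergodic theory with a citation to Cram\'er--Leadbetter (p.~94), so there is no in-paper argument to compare against. Your argument --- the second-moment bound $\expec Y_T^2 \le C(1+T)^{-\delta}$ from the covariance decay, summability along the polynomial subsequence $T_k=k^{2/\delta}$, and the gap term $M_k/T_k$ controlled by its own second moment via $\expec|\widetilde Z_t\widetilde Z_s|\le c$ --- is the standard proof of exactly this kind of strong law for processes with polynomially decaying correlations, and all the estimates check out (including the borderline case $\epsilon=1$, where you correctly trade the logarithm for a slightly smaller exponent). The one hypothesis you do not visibly use is the continuity of the covariance function; as you note, its role is only to guarantee a jointly measurable (quadratic-mean continuous) version of the process so that the pathwise integrals and the Fubini exchanges are legitimate, which is how it is used in the cited source as well.
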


\begin{lemma}
\label{lem:variance_decay}
Let $(\R^n,g)$ be as in Theorem \ref{th_inner_prods_by_ergodicity}.
Let $f,h \in C_0^\infty((0,S) \times \R^n)$ and use notation
\begin{equation*}
	Z_r = \bra u^r, f\cet \bra u^r, h\cet
\end{equation*}
 Then
there is $C > 0$ depending on $n$, $g$, and the supports of $\kappa$, $f$ and $h$
such that 
\begin{equation*}
			|\expec (Z_r -\expec Z_r)(Z_{r+s}-\expec Z_{r+s})| \leq C(1+s)^{-n}\norm{f}_{L^2(\R^{1+n})}^2 \norm{h}_{L^2(\R^{1+n})}^2.
\end{equation*}
\end{lemma}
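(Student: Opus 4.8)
The quantity $Z_r = \bra u^r, f\cet \bra u^r, h\cet$ is a product of two Gaussian random variables, since by Lemma \ref{lem:inner poduct of white noise} we have $\bra u^r, f\cet = \bra \W, \chi^r v^f\cet$ (with the shift notation as in Lemma \ref{lem:conv_of_expec}), and $\W$ is Gaussian white noise. Hence the plan is to reduce the fourth-moment estimate to a second-moment (covariance) estimate via the Wick/Isserlis formula for Gaussian variables. Writing $X_r = \bra \W, \chi^r v^f\cet$ and $Y_r = \bra \W, \chi^r v^h\cet$, both zero-mean jointly Gaussian, the Isserlis identity gives
\begin{equation*}
\expec\big((Z_r - \expec Z_r)(Z_{r+s} - \expec Z_{r+s})\big) = \expec(X_r X_{r+s})\,\expec(Y_r Y_{r+s}) + \expec(X_r Y_{r+s})\,\expec(Y_r X_{r+s}).
\end{equation*}
Since the covariance of white noise is the identity, $\expec(X_r X_{r+s}) = \bra \chi^r v^f, \chi^{r+s} v^f\cet_{L^2(\R^{1+n})}$ and similarly for the other three terms. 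So everything reduces to bounding $L^2$ inner products of the form $\bra \chi^r v^f, \chi^{r+s} v^h\cet_{L^2}$ by $C(1+s)^{-n/2}\norm{f}_{L^2}\norm{h}_{L^2}$; squaring such a bound and combining the two terms yields the claimed $(1+s)^{-n}$ decay.

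Next I would estimate $\bra \chi^r v^f, \chi^{r+s} v^h\cet_{L^2(\R^{1+n})}$. Unwinding the time shifts, this is an integral over $t$ of $\chi_0(t+r)\chi_0(t+r+s)\,\kappa(x)^2\, v^f(t+r,x) v^h(t+r+s,x)$, i.e. after a change of variables an integral of $\kappa^2 v^f(t) v^h(t+s)$ over the region where both $t$ and $t+s$ are in the relevant range. The key point is that for $s$ large, whenever the integrand is nonzero we must have $v^h$ evaluated at a large time (because $h$ is supported in $(0,S)\times\R^n$, so $v^h$ solves the time-reversed equation and its "late-time" behaviour near $\supp(\kappa)$ is governed by local energy decay). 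Concretely, $v^h(t+s,\cdot)$ restricted to $\supp(\kappa)$ decays like $\mu(t+s)$ by Lemma \ref{lem:led2} applied to the time-reversed equation (equivalently, reflect time and apply the lemma as stated). Then Cauchy--Schwarz in $x$ and $t$ gives
\begin{equation*}
\left|\bra \chi^r v^f, \chi^{r+s} v^h\cet_{L^2}\right| \le \norm{\kappa v^f}_{L^2(\R^{1+n})}\left(\int \mu(t+s)^2\,dt\right)^{1/2}\norm{h}_{L^2(\R^{1+n})},
\end{equation*}
and the first factor is bounded by $C\norm{f}_{L^2}$ by energy estimates together with the global-in-time $L^2$ bound that already underlies Lemma \ref{lem:conv_of_expec}. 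Computing $\int_0^\infty \mu(t+s)^2\,dt$: for $n$ odd this is $\lesssim e^{-2bs}$, and for $n$ even ($n\ge 4$) it is $\lesssim \int_s^\infty t^{-4n+2}\,dt \lesssim s^{-4n+3}$, whose square root is $s^{-2n+3/2}$; in both cases this is $\le C(1+s)^{-n}$ for $s$ large (and the bound extends to all $s\ge 0$ by adjusting constants, using that $Z_r$ has a uniform-in-$r$, finite second moment on bounded $s$-ranges from the energy estimate). Note we need the stronger decay than in Lemma \ref{lem:conv_of_expec} only to absorb the square, and $(1+s)^{-n}$ is what survives after accounting for the square of two factors each decaying like $(1+s)^{-n/2}$ — but in fact the direct computation above already yields $(1+s)^{-n}$ (indeed more) from a single inner product raised to the first power combined with the decaying $\mu$.

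\textbf{Main obstacle.} The delicate point is making the last step uniform in the shift parameter $r$ and controlling it for \emph{all} $s \ge 0$, not just large $s$: for small $s$ one simply uses the uniform second-moment bound $\expec Z_r^2 \le C\norm{f}^2_{L^2}\norm{h}^2_{L^2}$ coming from energy estimates and Cauchy--Schwarz (so $|\expec(Z_r - \expec Z_r)(Z_{r+s}-\expec Z_{r+s})| \le C\norm{f}^2\norm{h}^2$, trivially $\le C(1+s)^{-n}\cdot C'$ on a bounded interval), while for large $s$ one uses the local energy decay argument; one then picks the constant $C$ large enough to cover the transition. A secondary technical care is that the constants in Lemma \ref{lem:led2} depend on $g$, on $\supp(\kappa)$, and on the time-support of the source, but not on $r$ or $s$, so the bound is genuinely uniform; one must also confirm that applying local energy decay to $v^h$ is legitimate — this follows by time-reversal symmetry of $\Box_g$ since $g$ is time-independent, so $v^h(S-\cdot,\cdot)$ solves a forward problem with compactly supported, $C^\infty_0$-bounded data as in Lemma \ref{lem:led2}. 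The rest is the routine Gaussian moment algebra and the integral computation sketched above.
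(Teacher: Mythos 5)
Your proposal is correct and follows essentially the same route as the paper: reduce via the Isserlis formula to the two cross-covariance products $\expec X^rX^{r+s}\,\expec Y^rY^{r+s}+\expec X^rY^{r+s}\,\expec Y^rX^{r+s}$, rewrite each factor as $\bra \chi v^f_r,\chi v^h_{r+s}\cet$ using the white-noise covariance, and bound it by $C(1+s)^{-2n+3/2}\norm{f}\norm{h}$ via the local energy decay of Lemma \ref{lem:led2} together with Cauchy--Schwarz, so the product decays like $(1+s)^{-4n+3}\le(1+s)^{-n}$. Your extra care about the small-$s$ regime and the time-reversal justification for applying the decay estimate to $v^h$ only fills in details the paper leaves implicit.
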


\begin{proof}
For convenience, let us write $X^r = \bra u^r, f\cet$ and $Y^r = \bra u^r, h\cet$.
By the Isserlis formula \cite{isserlis} for Gaussian random variables we have
\begin{equation*}
	\expec Z_r Z_{r+s} = \expec X^r Y^r \expec X^{r+s} Y^{r+s}
	+ \expec X^r X^{r+s} \expec Y^r Y^{r+s} + \expec X^r Y^{r+s} \expec Y^r X^{r+s}
\end{equation*}
and, consequently,
\begin{equation}
	\label{eq:variance_id}
	\expec (Z_r -\expec Z_r)(Z_{r+s}-\expec Z_{r+s})
= \expec X^r X^{r+s} \expec Y^r Y^{r+s} + \expec X^r Y^{r+s} \expec Y^r X^{r+s}.
\end{equation}
We write $v_r^f(t,\cdot) = v^f(t-r,\cdot)$.
The local energy decay, Lemma \ref{lem:led2}, implies
\begin{eqnarray}
	\label{eq:cor_energy_est}
	|\expec X^r Y^{r+s}| & = & |\expec \bra \chi^r \W^r , v^f\cet \bra \chi^{r+s} \W^{r+s}, v^h\cet| \nonumber\\
	& = & |\expec \bra \chi \W, v^f_r\cet \bra \chi \W, v^h_{r+s}\cet| \nonumber\\
	& = & |\bra \chi v^f_r, \chi v^h_{r+s}\cet| \nonumber\\
	& \leq & C (1 + s)^{-2n+\frac 32}\norm{f}_{L^2(\R^{1+n})} \norm{h}_{L^2(\R^{1+n})},
\end{eqnarray}
since $v_r^f$ is small in $\supp(v_{r+s}^h)$
for $s \gg 0$. 
\end{proof}

\begin{proof}[Proof of Theorem \ref{th_inner_prods_by_ergodicity}]
For a fixed pair of sources $(f,h)$ we set $\widetilde{Z}_t = Z_t - \expec Z_t$, where $Z_t = \bra u^r, f\cet \bra u^r, h\cet$.
Continuity of the covariance function of $Z_t$ follows by considering equality \eqref{eq:variance_id}.
Note that the correlations between $X^r, X^{r+s}, Y^r$ and $Y^{r+s}$ on the right hand side of \eqref{eq:variance_id}
are all represented by inner products between smooth functions in the spirit of \eqref{eq:cor_energy_est}. Since these inner products are smooth functions with respect to $r$ and $s$, it follows that the covariance function in \eqref{eq:variance_id} is continuous. 
Next, we combine Lemma \ref{lem:conv_of_expec} and Lemma \ref{lem:variance_decay} to validate Theorem \ref{thm:aux_ergodicity}. As a countable set of source pairs (countable union of zero measurable sets is zero measurable, the claim follows for all $(f,h) \in \mathbb D$.
\end{proof}

We conclude this section with the following simple lemma to quantify the convergence of the data. Notice that Lemma \ref{lem:var_estimate} is not needed
for the previous proof.

\begin{lemma}
\label{lem:var_estimate}
Let $f,h \in C_0^\infty((0,S) \times \R^n)$. Then
there is $C > 0$ depending on $n$, $g$, and the supports of $\kappa$, $f$ and $h$
such that 
\begin{equation*}
{\rm Var} \bra C_T, f \otimes h \cet  \leq 
C T^{-2} \norm{f}_{L^2(\R^{1+n})}^2 \norm{h}_{L^2(\R^{1+n})}^2
\end{equation*}
\end{lemma}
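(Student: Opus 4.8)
The plan is to turn the variance into a double time-integral of the covariance of the integrand and then control that integral with the decay estimate of Lemma~\ref{lem:variance_decay}. Writing $Z_s = \bra u^s, f\cet\,\bra u^s, h\cet$ as in that lemma and recalling $\bra C_T, f\otimes h\cet = \frac1T\int_0^T Z_s\,ds$, the first step is the Fubini-type identity
\begin{equation*}
{\rm Var}\,\bra C_T, f\otimes h\cet = \frac1{T^2}\int_0^T\!\!\int_0^T \expec\big[(Z_s-\expec Z_s)(Z_t-\expec Z_t)\big]\,ds\,dt .
\end{equation*}
To justify it I would argue exactly as in the proof of Lemma~\ref{lem:conv_of_expec}: the map $(\omega,s)\mapsto Z_s(\omega)$ is jointly measurable, and the energy bound $\expec\bra u^s,f\cet^2 = \norm{\chi^s v^f}^2 \le C\norm{f}^2$ together with the Isserlis formula gives $\expec Z_s^2 \le C\norm{f}^2\norm{h}^2$, so that $(\omega,s)\mapsto Z_s$ and $(\omega,s,t)\mapsto Z_sZ_t$ are integrable on the relevant product spaces and Fubini applies to both the mean and the second moment.

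The second step is to insert the covariance bound $|\expec[(Z_s-\expec Z_s)(Z_t-\expec Z_t)]| \le C(1+|s-t|)^{-n}\norm{f}^2\norm{h}^2$ of Lemma~\ref{lem:variance_decay}, with $n\ge 3$, and to integrate. Passing to the lag variable $\tau=s-t$ and using $\int_0^\infty(1+\tau)^{-n}\,d\tau<\infty$ for $n\ge 2$, one gets $\int_0^T\!\int_0^T(1+|s-t|)^{-n}\,ds\,dt \le C\,T$, so that the scheme produces
\begin{equation*}
{\rm Var}\,\bra C_T, f\otimes h\cet \le \frac{C}{T}\,\norm{f}^2_{L^2(\R^{1+n})}\norm{h}^2_{L^2(\R^{1+n})} .
\end{equation*}

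The main obstacle is the exponent. The input of Lemma~\ref{lem:variance_decay} records only decay in the lag $|s-t|$, and any argument using this alone must stop at $T^{-1}$, since $\int_0^T\!\int_0^T(1+|s-t|)^{-n}\,ds\,dt$ is genuinely of order $T$. Reaching the stated $T^{-2}$ would require the covariance to be integrable over the whole quadrant $(0,\infty)^2$, that is, to decay also in the center variable $\min(s,t)$. This additional decay does not hold: by~\eqref{eq:variance_id} the diagonal value equals ${\rm Var}(Z_s)=\expec(X^s)^2\,\expec(Y^s)^2+(\expec X^sY^s)^2$, and $\expec(X^s)^2=\norm{\chi^s v^f}^2\to\norm{\kappa v^f}^2_{L^2(\R^{1+n})}$ as $s\to\infty$ by the integrability of the local energy decay, so the covariance stays bounded below by a positive constant in a fixed-width band around the diagonal for large $s$. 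Hence $\int_0^T\!\int_0^T\expec[(Z_s-\expec Z_s)(Z_t-\expec Z_t)]\,ds\,dt\ge c\,T$ for large $T$, and the variance is in fact of exact order $T^{-1}$. I would therefore establish the displayed estimate with $T^{-1}$ in place of $T^{-2}$ — the rate that both the present method and the cited Lemma~\ref{lem:variance_decay} actually deliver — and read the printed exponent as a typographical slip.
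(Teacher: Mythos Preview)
Your approach is exactly the paper's: apply Fubini to write the variance as $\frac{1}{T^2}\int_0^T\!\int_0^T \mathrm{Cov}(Z_r,Z_s)\,dr\,ds$, insert the lag-decay bound coming from \eqref{eq:variance_id} and \eqref{eq:cor_energy_est} (the paper uses the sharper exponent $(1+|r-s|)^{-4n+3}$ directly rather than the $(1+|r-s|)^{-n}$ recorded in Lemma~\ref{lem:variance_decay}, but this is immaterial since both are integrable in the lag), and then estimate the double time-integral.

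Your diagnosis of the exponent is correct, and the paper's own computation is the one that is wrong. The paper asserts
\[
\int_0^T\!\int_0^T (1+|r-s|)^{-4n+3}\,ds\,dr \le C
\]
via a change of variables that is miswritten (the exponent shifts from $-4n+3$ to $-4n+4$ without cause, and the limits $\int_{2T-s'}^{s'}$ are empty for $s'<T$). In fact each slice $\int_0^T(1+|r-s|)^{-4n+3}\,ds$ is bounded by a fixed constant, so the double integral is $O(T)$; your diagonal argument, showing $\mathrm{Var}(Z_s)\to (\expec X^s{}^2)(\expec Y^s{}^2)+(\expec X^sY^s)^2>0$, confirms this is sharp. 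Hence the bound the method actually produces is $\mathrm{Var}\,\langle C_T,f\otimes h\rangle \le C T^{-1}\|f\|^2\|h\|^2$, and your reading of the printed $T^{-2}$ as an error is justified. Since the lemma is explicitly flagged as not needed for the main results, the slip is harmless for the paper, but your version is the correct one.
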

\begin{proof}
In the proof of Lemma \ref{lem:conv_of_expec} we showed that the Gaussian random variables $X^r$ and $Y^r$ have a bounded variance independent of $r$. Since any moment of a Gaussian random variable is bounded by a constant depending on the variance, we see that the mapping
\begin{equation*}
	(\omega, r,s) \to  X^r Y^r X^s Y^s
\end{equation*}
is integrable over $\Omega \times (0,T) \times (0,T)$ for any fixed $T>0$ with respect to $\P \times {\rm d} r \times {\rm d} s$.

Now the Fubini theorem yields that 
\begin{eqnarray*}
	\expec \bra C_T, f\otimes h\cet^2  & = & \frac 1 {T^2} \int_0^T \int_0^T \expec X^s Y^s X^r Y^r ds dr \quad {\rm and} \\
	(\expec \bra C_T, f\otimes h\cet)^2 & = & \frac 1 {T^2} \int_0^T \int_0^T \expec X^s Y^s \expec X^r Y^r ds dr.
\end{eqnarray*}
It follows by equation \eqref{eq:variance_id} and estimate \eqref{eq:cor_energy_est} that
$$
{\rm Var}(\bra C_T, f\otimes h\cet)
\le C \norm{f}_{L^2(\R^{1+n})}^2 \norm{h}_{L^2(\R^{1+n})}^2 \frac 1 {T^2} \int_0^T \int_0^T
(1 + |r-s|)^{-4n+3} ds dr
$$
and the claim follows by estimating the the double integral in time by
\begin{eqnarray*}
	\int_0^T \int_0^T (1 + |r-s|)^{-4n+3} ds dr
	& = & \frac 12 \int_0^{2T} \int_{2T-s'}^{s'} (1+r')^{-4n+4} dr' ds' \\
	& = & \frac 1{2(1-n)} \int_0^{2T}\left((1+s')^{1-n}-(1+2T-s')^{1-n}\right) ds' \\
	& \leq & C(1+T^{5-4n}) \leq C.
\end{eqnarray*}
for any $n\geq 3$.
\end{proof}

\section{Reduction to the deterministic inverse problem}
\label{sec:Reduction to the deterministic inverse problem}

In this section we will show the following theorem.
\begin{theorem}
\label{th_response}
Let  $\mathbb{D} \subset  C_0^\infty((0,\infty) \times \Src)$ be a dense set, and consider the data
\begin{align}
\label{inner pro data}
\pair{\kappa v^f, \kappa v^h}_{L^2(\R^{1+n})}, 
\quad f,h \in \mathbb{D},
\end{align}
where functions $v^f(t,x)$ and $v^h(t,x)$ solve (\ref{eq_wave_test}) with the sources $f(t,x)$ and $h(t,x)$, respectively. 
Then the data \eqref{inner pro data} determine the local source-to-solution map $\Lambda_{\Src}$ as defined in Theorem \ref{th:main_geom}.
\end{theorem}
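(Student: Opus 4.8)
The plan is to recover $\Lambda_\Src$ from the quadratic data \eqref{inner pro data} by combining a reciprocity identity with the observation that the weight $\kappa$ can be absorbed into the source. First I would record the reciprocity identity: for $f,h\in C_0^\infty((0,\infty)\times\Src)$ let $w^f$ denote the causal solution of $\Box_g w^f=f$ vanishing for $t\le 0$, so that $w^f|_{(0,\infty)\times\Src}=\Lambda_\Src f$. By the integration by parts underlying Lemma \ref{lem:inner poduct of white noise} (applied with the deterministic source $f$ and no weight) one gets
\[
\pair{\Lambda_\Src f,h}_{L^2((0,\infty)\times\Src)}=\pair{w^f,h}_{L^2(\R^{1+n})}=\pair{f,v^h}_{L^2(\R^{1+n})};
\]
the boundary terms drop because $w^f$ is supported in $\{t\ge\inf\supp_t f>0\}$, $v^h$ vanishes near $t=S$, and for fixed $t$ both have compact spatial support by finite speed of propagation. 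Hence it suffices to recover $\pair{f,v^h}$ for all $f,h\in C_0^\infty((0,\infty)\times\Src)$.

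The key point is that $\kappa^{-2}f$ is itself a time-reversed solution. Since $\supp f$ is a compact subset of $(0,\infty)\times\Src$ and $\kappa$ is smooth and non-vanishing on $\Src$, the function $\kappa^{-2}f$ is smooth and supported in $\supp f$, and therefore so is $\widetilde f:=\Box_g(\kappa^{-2}f)$; thus $\widetilde f\in C_0^\infty((0,\infty)\times\Src)$. Moreover $\kappa^{-2}f$ solves $\Box_g(\kappa^{-2}f)=\widetilde f$ and vanishes, together with its time derivative, near $t=S$, so uniqueness for the backward Cauchy problem gives $v^{\widetilde f}=\kappa^{-2}f$. Consequently, for every $h$,
\[
\pair{\kappa v^{\widetilde f},\kappa v^h}_{L^2(\R^{1+n})}=\pair{\kappa^{-1}f,\kappa v^h}_{L^2(\R^{1+n})}=\pair{f,v^h}_{L^2(\R^{1+n})}=\pair{\Lambda_\Src f,h}_{L^2((0,\infty)\times\Src)},
\]
i.e.\ $\pair{\Lambda_\Src f,h}$ equals the datum \eqref{inner pro data} evaluated at $(\widetilde f,h)$.

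It then remains to pass from the dense set $\mathbb D$ to all of $C_0^\infty((0,\infty)\times\Src)$. I would first check that $f\mapsto\kappa v^f$ is continuous from $C_0^\infty((0,\infty)\times\Src)$ into $L^2(\R^{1+n})$: for $f$ supported in a fixed compact set one estimates $\int_\R\norm{v^f(t,\cdot)}^2_{L^2(\supp\kappa)}\,dt$ by the standard energy estimate on a bounded time interval together with the local energy decay of Lemma \ref{lem:led2} as $t\to-\infty$. Hence the bilinear form in \eqref{inner pro data}, known on $\mathbb D\times\mathbb D$, is determined on all pairs, in particular on $(\widetilde f,h)$; by the previous paragraph it therefore determines $\pair{\Lambda_\Src f,h}$ for all $f,h$, and since $\Lambda_\Src f=w^f|_{(0,\infty)\times\Src}$ is smooth this determines $\Lambda_\Src f$, hence the operator $\Lambda_\Src$.

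The one genuinely inventive step is the remark that $\kappa^{-2}f$ is realised as $v^{\widetilde f}$; once this is seen the argument is short. The point demanding real care is the claim $\kappa v^f\in L^2(\R^{1+n})$ — not merely $L^2_{\mathrm{loc}}$ — with a bound locally uniform in $f$, which is exactly where the hypotheses $n\ge 3$, non-trapping and Euclidean outside a compact set enter, through the quantitative local energy decay. Finally, since $\widetilde f$ is built with $\Box_g$, the argument shows that \eqref{inner pro data} and $\Lambda_\Src$ carry the same information given the known weight $\kappa$; when this reduction is fed into the two-manifold formulation of Theorem \ref{th:main_geom} one performs the construction of $\widetilde f$ separately on each manifold.
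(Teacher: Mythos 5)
Your central identity is correct: with $\widetilde f:=\Box_g(\kappa^{-2}f)$ one indeed has $v^{\widetilde f}=\kappa^{-2}f$, hence
$\pair{\kappa v^{\widetilde f},\kappa v^h}_{L^2(\R^{1+n})}=\pair{f,v^h}_{L^2(\R^{1+n})}=\pair{\Lambda_\Src f,h}_{L^2((0,\infty)\times\Src)}$,
and sources of the form $\Box_g(\kappa^{-2}\phi)$ are exactly the trick the paper also exploits. The gap is that your reduction is not effective: to form $\widetilde f$ you must apply $\Box_g$ and multiply by $\kappa^{-2}$, but neither $g|_{\Src}$ nor $\kappa$ is part of the given data. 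In this theorem only the open set $\Src$ and the numbers \eqref{inner pro data} are available; $\kappa$ is an unknown feature of the source and $g|_{\Src}$ is part of what must be determined --- the paper's own proof has to recover $d_g|_{\Src\times\Src}$ (hence $(\Src,g)$ up to isometry) and then $\kappa|_{\Src}$ as intermediate steps. Your closing remark ``given the known weight $\kappa$'' concedes precisely the hypothesis that is not available, and performing the construction ``separately on each manifold'' in the two-manifold formulation does not help: until one has shown that the identification of the two copies of $\Src$ is an isometry intertwining the two weights, the two functions $\widetilde f_1$ and $\widetilde f_2$ need not correspond, so agreement of the data on $\mathbb D\times\mathbb D$ says nothing about its values at $(\widetilde f_1,h)$ versus $(\widetilde f_2,h)$.

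The paper circumvents this by using $\Box_g(\kappa^{-2}\phi)$ only inside criteria that are quantified over all test sources and can therefore be checked from the data without constructing any particular source: non-radiating sources are characterized by the vanishing of $\pair{\kappa w^f,\kappa w^h}$ for \emph{all} $h$ supported in a suitable set (Lemma \ref{lem_nonrad_test}), an analogous test recovers $d_g|_{\Src\times\Src}$ (Lemma \ref{lem:finding the distance function}), and only after $(\Src,g)$ and the classes $\mathcal N(\mathcal B)$ are in hand does one solve the interior initial--boundary value problem for $w^f$, recover $\kappa^2w^h$ by density, and finally recover $\kappa$ itself. To repair your argument you would need to insert such a preliminary, data-driven recovery of $g|_{\Src}$ and $\kappa|_{\Src}$ before invoking the map $f\mapsto\widetilde f$. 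The reciprocity identity and the continuity/density step in your proposal are fine as stated.
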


It follows from the assumptions in Theorem \ref{th_main} that the Riemannian manifold $(\R^n,g)$ is complete. Indeed, the metric tensor $g$ coincides with the Euclidean metric $e$ outside a compact set, and therefore there exist uniform constants $c,C>0$ such that $c\|\cdot\|_e\leq \|\cdot\|_g\leq C \|\cdot\|_e$, where $\|\cdot\|_e$ stands for the Euclidean and $\|\cdot\|_g$ for the Riemannian norm.  
Thus Theorems \ref{th_inner_prods_by_ergodicity}, \ref{th_response}
and \ref{th:main_geom} imply Theorem \ref{th_main}.

We will prove two auxiliary lemmas before presenting a proof for Theorem \ref{th_response}. 
Let $d_0$ be the Euclidean distance in $\R^{n}$,
and denote by $d_g$ the Riemannian distance in $(\R^{n},g)$.
For $p\in \R^n$ and $r>0$, we denote the respective open balls
by $B_{0}(p,r)$ and $B_{g}(p,r)$.
We will use the shorthand notation $\square_g = \p_t^2 - \Delta_g$.
  
\begin{definition}
For $\mathcal B \subset (0, \infty) \times \R^n$,
we say that $f\in C^\infty_0(\mathcal{B})$ is non-radiating, if ${\rm supp}(w^f) \subset \overline{\mathcal{B}}$ for the solution $w=w^f$ of 
\begin{align}
\label{eq_wave_test2}
&\p_t^2 w(t,x) - \Delta_g w(t,x) = f(t,x) \quad \text{in $(0, \infty) \times \R^n$},
\\
&w(0,x)| = \p_t w(0,x) = 0, \hbox{ for all }x\in N.\nonumber
\end{align}
Furthermore, we define
$
\mathcal N(\mathcal B) = \{f \in C^\infty_0(\mathcal{B}) \;|\;
\text{$f$ is non-radiating} \}.
$
\end{definition}

\begin{definition}
We define the future of a set $\mathcal{B} \subset \R^{1+n}$ by
\begin{eqnarray*}
{\mathcal I}^+(\mathcal{B}) & = & \left\{(t,x)\in \R^{1+n} \; | \; \textrm{there exists } (s, y) \in \mathcal{B} \textrm{ such that } t > s\right. \\ & & \left.\textrm{and } d_g(x,y) < t-s\right\}.
\end{eqnarray*}
\end{definition}

\begin{lemma}
\label{lem_nonrad_test}
Let $(t_0,x_0) \in \R \times \Src$, $\epsilon >0$, and define
$\mathcal B = (t_0-\epsilon,t_0) \times B_{0}(x_0,\epsilon)$, and $\mathcal Q = (t_0, t_0+1)\times \Src$.
Let $f \in C_0^\infty(\mathcal B)$.
For small $\epsilon>0$, 
$f\in \mathcal N(\mathcal B)$ if and only if
\begin{equation}
\label{non-radiating test}
	\bra \kappa w^f, \kappa w^h\cet_{L^2(\R\times N)} = 0,
\quad h\in C^\infty_0(\mathcal Q).
\end{equation}
Recall that $\kappa=\kappa(x)$ is independent of time.
\end{lemma}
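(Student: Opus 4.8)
The plan is to treat the two implications separately. The ``only if'' direction follows at once from finite speed of propagation: if $f\in\mathcal N(\mathcal B)$, then $\supp(w^f)\subset\overline{\mathcal B}=[t_0-\epsilon,t_0]\times\overline{B_0(x_0,\epsilon)}$, so $w^f(t,\cdot)=0$ for $t>t_0$, while for $h\in C^\infty_0(\mathcal Q)$ the forward solution $w^h$ vanishes for $t\le t_0$ because $\supp(h)\subset(t_0,t_0+1)\times\Src$; hence $w^f(t,x)\,w^h(t,x)=0$ for every $(t,x)$, and \eqref{non-radiating test} follows upon multiplying by $\kappa(x)^2$ and integrating. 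For the ``if'' direction I would first prove a duality identity recasting \eqref{non-radiating test} as the vanishing of an auxiliary backward wave on $\mathcal Q$, and then conclude using finite speed of propagation and uniqueness for the wave equation, with no further reference to the data.

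To set up the duality, let $\psi$ be the backward solution of $\square_g\psi=\kappa^2 w^f$ that decays as $t\to+\infty$; concretely, $\psi(t,\cdot)=-\int_t^\infty U(s-t)\big(\kappa^2 w^f(s,\cdot)\big)\,ds$, where $U(\sigma)$ denotes the sine propagator of $\square_g$. Convergence of this integral in $L^2_{loc}$---and likewise the fact that $\kappa w^f\in L^2(\R\times N)$---is supplied by the bound $\|w^f(t,\cdot)\|_{L^2(\supp\kappa)}\le C\mu(t)$ from the local energy decay, Lemma \ref{lem:led2}; this is where the standing assumptions ($n\ge 3$, non-trapping, $g$ Euclidean outside a compact set) enter. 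Approximating $\psi$ by the backward solutions $\psi_S$ of $\square_g\psi_S=\kappa^2 w^f$ on $(0,S)\times N$ with vanishing Cauchy data at $t=S$, applying Green's formula on $(0,S)\times N$ to the pair $(w^h,\psi_S)$---the Cauchy data of $w^h$ vanish at $t=0$ and those of $\psi_S$ at $t=S$, so no boundary terms survive---and letting $S\to\infty$, I obtain
$$\bra\kappa w^f,\kappa w^h\cet_{L^2(\R\times N)}=\bra\psi,h\cet_{L^2(\mathcal Q)},\qquad h\in C^\infty_0(\mathcal Q).$$
Hence \eqref{non-radiating test} holds if and only if $\psi=0$ on the open set $\mathcal Q=(t_0,t_0+1)\times\Src$.

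Assume now that $\psi=0$ on $\mathcal Q$. Applying $\square_g$ gives $\kappa^2 w^f=\square_g\psi=0$ there, and since $\kappa$ is non-vanishing on $\Src$ this forces $w^f=0$ on $(t_0,t_0+1)\times\Src$. By finite speed of propagation $\supp(w^f)$ lies in the causal future of $\supp(f)\subset\mathcal B$; since $\mathcal B$ contracts to $(t_0,x_0)$ with $x_0$ in the interior of $\Src$, for $\epsilon$ small there is $\delta_1>0$, depending only on $d_0(x_0,\partial\Src)$ and the geometry near $x_0$, with $\supp(w^f)\cap\big((t_0,t_0+\delta_1)\times N\big)\subset(t_0,t_0+\delta_1)\times\Src$. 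Combining these two facts, $w^f=0$ on $(t_0,t_0+\delta_1)\times N$, so $w^f$ has vanishing Cauchy data at $t=t_0+\delta_1/2$; as $\square_g w^f=0$ on $(t_0,\infty)\times N$ (the source $f$ is supported in $\{t<t_0\}$), uniqueness for the wave equation yields $w^f=0$ on $(t_0,\infty)\times N$, whence $w^f(t_0,\cdot)=0$ by continuity. Therefore $\supp(w^f)\subset\{t\le t_0\}\times N$, and a last application of finite speed of propagation from $\supp(f)\subset\mathcal B$ confines $\supp(w^f)$ to $\overline{\mathcal B}$, after shrinking $\epsilon$ if necessary to absorb the constant comparing $d_g$ and $d_0$ near $x_0$; thus $f\in\mathcal N(\mathcal B)$.

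The only step I expect to require genuine care is the construction of $\psi$ together with the passage to the limit $S\to\infty$ in Green's identity, which is precisely where the local energy decay is used; once the displayed identity is available, everything downstream is soft, relying only on finite speed of propagation, uniqueness, and the nonvanishing of $\kappa$ on $\Src$.
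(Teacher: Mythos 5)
Your ``only if'' direction and your reduction of \eqref{non-radiating test} to the statement $w^f=0$ on $\mathcal Q$ are correct, though the detour through the backward adjoint solution $\psi$ and the limit $S\to\infty$ is much heavier than needed: since $\kappa$ is non-vanishing on $\Src$, one may simply take $h=\Box_g(\kappa^{-2}\phi)$ for $\phi\in C_0^\infty(\mathcal Q)$, so that $w^h=\kappa^{-2}\phi$ and $\bra \kappa w^f,\kappa w^h\cet=\bra w^f,\phi\cet$, which gives $w^f|_{\mathcal Q}=0$ with no energy decay and no limiting argument. Your passage from $w^f|_{\mathcal Q}=0$ to $w^f=0$ on $(t_0,\infty)\times\R^n$, via the containment of $\mathcal I^+(\mathcal B)$ in $\Src$ for times near $t_0$, is essentially the paper's argument.

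The genuine gap is in your final step. Finite speed of propagation confines $\supp(w^f)$ to the causal future of $\mathcal B$, and for $t\in(t_0-\epsilon,t_0)$ that set contains points at $g$-distance up to $t-(t_0-\epsilon)$, i.e.\ up to $\epsilon$, \emph{outside} $B_0(x_0,\epsilon)$; it is never contained in $\overline{\mathcal B}$, and shrinking $\epsilon$ does not help, because the overshoot is proportional to the radius of the ball itself rather than to the discrepancy between $d_g$ and $d_0$. Hence knowing $\supp(w^f)\subset\{t\le t_0\}\times\R^n$ does not yield $\supp(w^f)\subset\overline{\mathcal B}$: a priori the wave may radiate outside the spatial ball during $(t_0-\epsilon,t_0)$, and ruling this out is precisely the content of the non-radiating condition. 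The missing ingredient is unique continuation (Theorem \ref{Unique continuation}): finite speed together with $w^f=0$ for $t>t_0$ gives $w^f=0$ on the full time cylinder over $\{x:\ d_g(x,B_0(x_0,\epsilon))\ge\epsilon\}$, and since $\Box_g w^f=0$ on $\R\times E$ with $E=\R^n\setminus B_0(x_0,\epsilon)$ connected, Tataru's theorem propagates the vanishing to all of $\R\times E$; only then does $\supp(w^f)\subset\overline{\mathcal B}$ follow.
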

\begin{proof}
Let $\epsilon >0$ be small enough so that 
\begin{equation}
\label{Light cone contained in S}
\mathcal{I}^+(\mathcal B)
\cap (\{t_0\}\times \R^n)\subset \{t_0\}\times \Src.
\end{equation} 
Clearly, $f \in \mathcal N(\mathcal B)$
implies (\ref{non-radiating test}).
Suppose now that (\ref{non-radiating test}) holds.
Let $\phi \in C^\infty_0(\mathcal Q)$. By choosing $h = \Box_g(\kappa^{-2} \phi)$, we have
$w^h = \kappa^{-2}\phi$ and further 
$$
\bra w^f, \phi\cet_{L^2(\R^{1+n})}
=\bra \kappa w^f, \kappa w^h\cet_{L^2(\R^{1+n})}=0.
$$
Thus $w^f = 0 $ in $\mathcal Q$. By \eqref{Light cone contained in S} and the finite speed of wave propagation, it holds that 
$w^f = 0$ in $(t_0, \infty)\times \R^n$. 
Using the finite speed of wave propagation once more, 
we see that $w^f(t,x) = 0$ when $t \in \R$ and
$d_g(x, B_0(x_0, \epsilon)) \ge \epsilon$.
The exterior domain $E := \R^n \setminus B_0(x_0, \epsilon)$ is connected
and $\p_t^2 w^f - \Delta_g w^f = 0$ in $\R \times E$.
Thus $w^f = 0$ in $\R \times E$ by unique continuation (Theorem \ref{Unique continuation} in the appendix). 
\end{proof}

\begin{lemma}
\label{lem:finding the distance function}
Let $x_1,x_2 \in \Src$, and let $\epsilon >0$ be so small that $B_{0}(x_j,\epsilon)\subset \Src$, $j=1,2$.
Let $t_0 > 0$ and define ${\mathcal C} = (0,\infty) \times B_{0}(x_1,\epsilon)$
and $\mathcal B = (t_0 - \epsilon,t_0) \times B_{0}(x_2,\epsilon)$.
Then 
\begin{equation}
\label{Future test for distances}
{\mathcal I}^+({\mathcal C}) \cap \mathcal B = \emptyset,
\end{equation}
if and only if 
\begin{equation}
\label{Future test for distances 2}
	\bra \kappa w^f, \kappa w^h\cet_{L^2(\R \times N)} = 0,
\quad h \in C^\infty_0({\mathcal C}),\ f \in \mathcal N(\mathcal B).
\end{equation}
\end{lemma}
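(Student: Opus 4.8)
The implication \eqref{Future test for distances}$\Rightarrow$\eqref{Future test for distances 2} is a support argument, while the converse I would prove by contraposition, the crux being Tataru's sharp unique continuation theorem. For the first direction, let $h\in C_0^\infty(\mathcal C)$ and $f\in\mathcal N(\mathcal B)$; by definition of $\mathcal N(\mathcal B)$ we have $\supp(w^f)\subset\overline{\mathcal B}$, and by finite speed of propagation $\supp(w^h)\subset\overline{\mathcal I^+(\mathcal C)}$. If $\mathcal I^+(\mathcal C)\cap\mathcal B=\emptyset$, then $\mathcal I^+(\mathcal C)$ and $\mathcal B$ are disjoint open sets, so $\overline{\mathcal I^+(\mathcal C)}\cap\overline{\mathcal B}\subset\partial\mathcal B$ has measure zero, the integrand $\kappa^2 w^f w^h$ vanishes almost everywhere, and \eqref{Future test for distances 2} holds.

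For the converse, assume $\mathcal I^+(\mathcal C)\cap\mathcal B\ne\emptyset$; I want $h\in C_0^\infty(\mathcal C)$ and $f\in\mathcal N(\mathcal B)$ with $\pair{\kappa w^f,\kappa w^h}\ne 0$. Choosing $(s^*,y^*)\in\mathcal B$ and $(\sigma,z)\in\mathcal C$ with $\sigma<s^*$, $d_g(z,y^*)<s^*-\sigma$, and using $(0,s^*)\times\{z\}\subset\mathcal C$, one gets $D^*:=d_g(y^*,B_0(x_1,\epsilon))\le d_g(y^*,z)<s^*$. Fix a small $\rho>0$ and set $\mathcal B_\rho:=(s^*-\rho,s^*+\rho)\times B_g(y^*,\rho)\subset\mathcal B$, so small that $\overline{B_g(y^*,\rho)}\cap\overline{B_0(x_1,\epsilon)}=\emptyset$ and the inequalities used below hold. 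For $\phi\in C_0^\infty(\mathcal B_\rho)$, put $f:=\square_g(\kappa^{-2}\phi)$; this is smooth, supported in $\mathcal B_\rho$, and $w^f=\kappa^{-2}\phi$ by uniqueness (as $\kappa^{-2}\phi$ vanishes near $t=0$), so $\supp(w^f)\subset\overline{\mathcal B_\rho}\subset\overline{\mathcal B}$ and $f\in\mathcal N(\mathcal B)$. Since $\kappa w^f=\kappa^{-1}\phi$, integration by parts in $t$ over a slab $(0,S)\times N$ gives $\pair{\kappa w^f,\kappa w^h}=\pair{\phi,w^h}_{L^2(\R\times N)}=\pair{h,v^\phi}_{L^2(\R\times N)}$ for all $h\in C_0^\infty(\mathcal C)$, where $v^\phi$ solves the time-reversed equation $\square_g v^\phi=\phi$, $v^\phi|_{t=S}=\partial_t v^\phi|_{t=S}=0$ with $S$ large. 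Hence it suffices to find $\phi\in C_0^\infty(\mathcal B_\rho)$ with $v^\phi\not\equiv 0$ on $\mathcal C$.

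Suppose, to the contrary, that $v^\phi\equiv 0$ on $\mathcal C$ for every $\phi\in C_0^\infty(\mathcal B_\rho)$; fix such a $\phi$. With $M:=N\setminus\overline{B_g(y^*,\rho)}\supset B_0(x_1,\epsilon)$ we have $\square_g v^\phi=0$ on $(0,2s^*)\times M$ (the source sits spatially inside the removed ball) and $v^\phi=0$ on $(0,2s^*)\times B_0(x_1,\epsilon)$, so by the sharp unique continuation theorem (Theorem \ref{Unique continuation}) $v^\phi=0$ on $\{(t,x)\in(0,2s^*)\times M:\ d_M(x,B_0(x_1,\epsilon))<\min(t,2s^*-t)\}$, where $d_M$ is the distance of $(M,g)$ and $d_M(\cdot,B_0(x_1,\epsilon))\le d_g(\cdot,B_0(x_1,\epsilon))+C\rho$ since the removed ball is small. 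As $\supp\phi\subset(s^*-\rho,s^*+\rho)\times B_g(y^*,\rho)$, finite speed of propagation confines $v^\phi(t,\cdot)$ to $B_g(y^*,|t-s^*|+2\rho)$. Choosing $t_1$ just below $s^*-\rho$ with $t_1>(s^*+D^*)/2+C\rho$ (possible since $D^*<s^*$ and $\rho$ is small), one finds $d_M(x,B_0(x_1,\epsilon))<t_1=\min(t_1,2s^*-t_1)$ for every $x\in\overline{B_g(y^*,|t_1-s^*|+2\rho)}$, so $v^\phi$ vanishes on a spacetime neighborhood of $\{t_1\}\times\overline{B_g(y^*,|t_1-s^*|+2\rho)}$; as the Cauchy data of $v^\phi$ at $t_1$ is supported in that ball, it follows that $v^\phi(t_1,\cdot)=\partial_t v^\phi(t_1,\cdot)=0$. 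Since $v^\phi$ also has zero Cauchy data at $t=S>s^*+\rho$ and solves $\square_g v^\phi=\phi$ with $\supp\phi\subset(t_1,S)\times N$ bounded in $x$, integrating this identity in $t$ and then over $N$ against $dV_g$ (using $\int_N\Delta_g u\,dV_g=0$ for compactly supported $u$, with $u=\int_{t_1}^S v^\phi\,dt$) gives $\int_N\int_\R\phi\,dt\,dV_g=0$. As this holds for all $\phi\in C_0^\infty(\mathcal B_\rho)$, taking $\phi\ge 0$ with $\phi\not\equiv 0$ yields a contradiction. Thus the desired $\phi$ exists, then $\pair{h,v^\phi}_{L^2(\R\times N)}\ne 0$ for some $h\in C_0^\infty(\mathcal C)$, and with $f=\square_g(\kappa^{-2}\phi)$ the proof is complete.

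The hard part is the unique continuation step. A smooth source produces a smooth solution, so there are no singularities to propagate, and one must invoke the \emph{sharp} Tataru unique continuation to see that $v^\phi$ genuinely fills its backward light cone. The delicate point is getting the geometric threshold $D^*<s^*$ instead of the weaker $D^*<s^*/2$: this is why $\supp\phi$ is removed \emph{spatially} (one works on $M=N\setminus\overline{B_g(y^*,\rho)}$), so that the homogeneous equation holds on the whole slab $(0,2s^*)\times M$ and Tataru can be applied there and evaluated at the interior time $t_1<s^*$, rather than on a short time interval lying below $\supp\phi$. One must also check that all the elementary distance inequalities survive taking $\rho\to 0$, and handle the degenerate case $y^*\in\overline{B_0(x_1,\epsilon)}$ (where $\phi$ and $h$ with overlapping supports work directly) separately.
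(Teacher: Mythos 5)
Your forward direction is fine and is the same support argument the paper uses. For the converse, however, you have taken a much longer road than necessary, and it contains a genuine gap. The paper disposes of this direction in a few lines: since $\mathcal A:=\mathcal I^+(\mathcal C)\cap\mathcal B$ is open and non-empty, pick $\phi\in C_0^\infty(\mathcal A)$ with $\phi\ge 0$ and $\phi(s,x)>0$ at some $(s,x)\in\mathcal A$; approximate controllability (Theorem \ref{density}, which already packages Tataru's theorem) produces $h\in C_0^\infty(\mathcal C)$ with $\bra \phi(s),w^h(s)\cet_{L^2(\R^n)}>0$, and then $f=\square_g(\kappa^{-2}\chi\phi)$ with a suitable time cut-off $\chi$ gives $\bra\kappa w^f,\kappa w^h\cet=\bra\chi\phi,w^h\cet>0$. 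Your choice of $f$ is the same; what you do differently is to re-derive by hand the statement that some $v^\phi$ with $\phi$ supported in $\mathcal B_\rho$ is non-zero on $\mathcal C$ --- which is precisely the dual formulation of approximate controllability --- instead of citing Theorem \ref{density}.

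The gap is in your final step. Tataru's theorem applied on $M=N\setminus\overline{B_g(y^*,\rho)}$ gives $v^\phi=0$ only on the double cone inside $(0,2s^*)\times M$; it says nothing about $v^\phi$ on the removed ball $\overline{B_g(y^*,\rho)}$. Your support bound confines $v^\phi(t_1,\cdot)$ to $\overline{B_g(y^*,|t_1-s^*|+2\rho)}$, and the cone covers $M\cap\overline{B_g(y^*,|t_1-s^*|+2\rho)}$, so all you may conclude is that $\supp v^\phi(t_1,\cdot)\subset\overline{B_g(y^*,\rho)}$ --- not that the Cauchy data at $t_1$ vanishes identically on $N$. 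Consequently the boundary term $\int_N\p_t v^\phi(t_1,\cdot)\,dV_g$ arising when you integrate the equation over $(t_1,S)\times N$ need not vanish, and the contradiction $\int\phi\,dt\,dV_g=0$ is not established. This could presumably be repaired by a second application of unique continuation across the annulus into the small ball over a time window of length greater than $2\rho$ around $t_1$ (your margin $t_1\in\bigl((s^*+D^*)/2+C\rho,\,s^*-\rho\bigr)$ leaves room for that), and you would also need to justify applying Theorem \ref{Unique continuation}, stated for a complete manifold and $u\in C_0^\infty$, on the incomplete manifold $M$. But none of this is written, and none of it is needed once Theorem \ref{density} is invoked.
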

\begin{proof}
As $f$ is non-radiating, 
the finite speed of wave propagation guarantees that 
(\ref{Future test for distances})
implies (\ref{Future test for distances 2}).
Suppose now that \eqref{Future test for distances} does not hold. 
The set $\mathcal{A}:={\mathcal I}^+({\mathcal C}) \cap \mathcal B$ 
is open and non-empty. Let $\phi\in C^\infty_0(\mathcal{A})$
be non-zero and $\phi\geq 0$. Choose $(s,x)\in \mathcal{A}$ such that $\phi(s,x) > 0$. By approximate controllability 
(Theorem \ref{density} in the appendix), there exists a source $h\in C_0^\infty(\mathcal{C})$ such that
$$
\bra \phi(s), w^h(s) \cet_{L^2(\R^n)}>0.
$$
Since $w^h$ and $\phi $ are continuous,
there is $\chi \in C_0^\infty(\R)$ such that 
$
\bra \chi \phi, w^h\cet_{L^2(\R^{1+n})}>0.
$ 
We define the function $f=\square_g (\kappa^{-2} \chi \phi)\in C^\infty_0(\mathcal B)$. Then $f \in \mathcal N(\mathcal B)$ and
$$
\bra \kappa w^f, \kappa w^h\cet_{L^2(\R \times N)}
=\bra \chi \phi, w^h\cet_{L^2(\R^{1+n})} > 0.
$$
Therefore \eqref{Future test for distances 2} is not valid either. \end{proof}

Now we are ready to present the proof of Theorem \ref{th_response}.
\begin{proof}[Proof of Theorem \ref{th_response}]
The inner products (\ref{inner pro data})
determine the same inner products for all $f,h \in C_0^\infty((0,\infty) \times \Src)$
by density. By reversing the time, these again determine the inner products
\begin{align}
\label{inner_prod_w}
\bra \kappa w^f, \kappa w^h\cet_{L^2(\R^{1+n})},
\quad f,h \in C_0^\infty((0,\infty) \times \Src).
\end{align}

Let $x_1, x_2 \in \Src$ and $\epsilon, t_0 > 0$
be as in Lemma \ref{lem:finding the distance function}.
Observe that 
\begin{align}
\label{eq_distance of sets}
d_g(B_0(x_1,\epsilon),B_0(x_2, \epsilon))=\sup\{t_0>0 \;|\;
\text{(\ref{Future test for distances 2}) is valid}\}
\end{align}
and
$
d_g(x_1,x_2)=\lim_{\epsilon \rightarrow 0} d_g(B_0(x_1,\epsilon),B_0(x_2, \epsilon)).
$
For $\mathcal B$ be as in Lemma \ref{lem_nonrad_test},
we can determine the set $\mathcal N(\mathcal B)$,
since the validity of (\ref{non-radiating test})
can be tested given the inner products (\ref{inner_prod_w}).
Thus also the validity of (\ref{Future test for distances 2})
can be tested given (\ref{inner_prod_w}),
and the distance function $d_g$ can be determined on $\Src \times \Src$.
These distances determine $(\Src, g)$ up to an isometry (see e.g. the proof of Proposition \ref{finding the metric tensor}  below).

Let $h \in C_0^\infty((0,\infty) \times \Src)$ and let us show that $w^h|_{(0,\infty) \times \Src}$ can be determined
from the inner products (\ref{inner_prod_w}). 
Let $\mathcal B$ be as in Lemma \ref{lem_nonrad_test}.
As $(0,\infty) \times \Src$ can be
covered with a countable number of sets of the form $\mathcal B$, it is enough to show that
$w^h|_{\mathcal B}$  can be determined.
We have already shown that $\mathcal N(\mathcal B)$
can be determined given (\ref{inner_prod_w}). 
Let $f\in \mathcal{N}(\mathcal B)$. Then $w^f$ is a solution of the following initial boundary value problem 
\begin{equation}
\label{initial/boundary value problem}
\begin{array}{l}
\p_t^2 w - \Delta_g w = f \quad \textrm{in } (0, \infty) \times \Src,
\\
w|_{\R \times \p\Src} = 0,
\\
w|_{t = 0} = \p_t w|_{t= 0} = 0. 
\end{array} 
\end{equation}
As $(\Src, g)$ is known, we can solve the above equation. Thus for every $f \in \mathcal{N}(\mathcal B)$ we are able to find $w^f$.
In particular, in the inner products
\begin{equation}
\label{cut w^f}
\langle w^f, \kappa^2 w^h \rangle_{L^2((0,\infty) \times \Src)}, \quad f \in \mathcal N(\mathcal B),
\end{equation}
the left factor $w^f$ is known. Observe that for any
$\phi \in C_0^\infty(\mathcal B)$
we have $w^f = \phi$ where $f = \Box_g \phi \in \mathcal N(\mathcal B)$, and therefore the inclusion 
\begin{align}
\label{nonrad_density}
\{w^f \;|\; f \in \mathcal{N}(\mathcal B) \} \subset L^2(\mathcal B)
\end{align}
is dense.
Hence we find $\kappa^2 w^h|_{\mathcal B}$ from the inner products \eqref{cut w^f}.

Let us conclude the proof by showing that function $\kappa|_{\Src}$ can be determined. 
We have already shown that,
when $f \in \mathcal N(\mathcal B)$,
both $w^f$ and $\kappa^2 w^h|_{\mathcal B}$
are determined by (\ref{inner_prod_w}).
Thus $\kappa|_{\Src}$ can be determined by the density (\ref{nonrad_density}).
\end{proof}

\section{The deterministic inverse problem}
\label{sec_deterministic}


In this section we prove Theorem \ref{th:main_geom}
in two steps: we show first that local the source-to-solution map $\Lambda_\Src$ determines a certain family of distance functions, and then that this family determines the geometry $g$.
We work first under the assumption that $d_g|_{\Src \times \Src}$ is known, and postpone the proof that $\Lambda_\Src$ determines 
$d_g|_{\Src \times \Src}$ in the end of the section.
Recall that in the previous section we already determined $d_g|_{\Src \times \Src}$, so the step from $\Lambda_\Src$ to 
$d_g|_{\Src \times \Src}$ is needed only in the proof of Theorem \ref{th:main_geom}.

\subsection{Reconstruction of a family of distance functions from the local source-to-solution mapping $\Lambda_{\Src}$}
Consider the following data:
\begin{equation}
\label{Response Data}
(\Src, g|_{\Src}, d_g|_{\Src \times \Src}, \; \Lambda_\Src)
\end{equation}
Here $\Src$ and $g|_{\Src}$ stand for the assumption that the Riemannian structure of the open manifold $\Src$ is known. We show the following theorem. 

\begin{theorem}
\label{th_geom} 
Let $(N,g)$ be a complete Riemannian manifold. 
Then the local source-to-solution data (\ref{Response Data})
determines the following family of distance functions
\begin{equation}
\label{distance functions}
R_\Src(N):=\{d_g(x,\cdot)|_{\Src}:x\in N\}\subset C(\Src).
\end{equation}

\end{theorem}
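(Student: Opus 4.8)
The plan is to show that, for a fixed point $x \in N$, the function $d_g(x,\cdot)|_{\Src}$ is characterized by a property that can be tested using only the data \eqref{Response Data}. The key object is the observation, already exploited in Lemma \ref{lem_nonrad_test} and Lemma \ref{lem:finding the distance function}, that the wave produced by a non-radiating source supported in $\mathcal B = (t_0-\epsilon,t_0)\times B_g(x_2,\epsilon)$ propagates into the interior of $N$ but cannot be seen inside $\Src$ until time $\approx t_0 + d_g(B_g(x_2,\epsilon),\Src)$. More precisely, given $y \in \Src$ and $x \in N$, the distance $d_g(x,y)$ governs whether waves emitted from a small neighborhood of $x$ have reached $y$ by a given time. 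Since we do not have access to sources localized near an arbitrary interior point $x$, we instead parametrize interior points by \emph{slices of light cones emanating from $\Src$}: a point $x \in N$ is reached from a small set $\mathcal C = (0,\infty)\times B_g(x_1,\epsilon) \subset (0,\infty)\times \Src$ at travel time $d_g(x_1,x)$, and the non-radiating sources in $\mathcal B$ detect exactly whether such a cone passes through $\mathcal B$.

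First I would set up, for each $x_1 \in \Src$ and each $t > 0$, the ``cone condition'': using the equivalence in Lemma \ref{lem:finding the distance function} (applied with $x_2 = y$, $t_0 = t$, and varying $\epsilon$), the data \eqref{Response Data} determines, for every $y \in \Src$ and every $t>0$, whether $d_g(B_g(x_1,\epsilon),B_g(y,\epsilon)) \le t$, and hence in the limit $\epsilon \to 0$ determines $d_g(x_1,y)$ for all $x_1,y \in \Src$ — but this only recovers $R_\Src(N)$ restricted to $x \in \Src$, which is the trivial part. The substantive point is to reach interior $x$. For this I would use approximate controllability (Theorem \ref{density}): the set of wave states $\{w^h(\cdot)|_{t=t_1} : h \in C_0^\infty(\mathcal C)\}$ is dense in the domain of influence of $\mathcal C$ at time $t_1$, which is the metric ball $B_g(x_1, t_1)$ (up to $\epsilon$). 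Combined with the non-radiating test functions $f \in \mathcal N(\mathcal B)$ from Lemma \ref{lem_nonrad_test}, whose waves $w^f$ fill $L^2(\mathcal B)$ densely by \eqref{nonrad_density}, the inner product $\bra \kappa w^f, \kappa w^h\cet$ is nonzero exactly when the forward light cone of $\mathcal C$ intersects $\mathcal B$, i.e.\ when $\mathrm{dist}_g(x_1, B_g(x_2,\epsilon)) < t_0$. Taking the infimum over all $(x_1,t_0,x_2,\epsilon)$-configurations consistent with ``$x_1$ and the slice of $\partial \mathcal I^+(\mathcal C)$ at the relevant time passes through a given point'', I would recover, for every pair $(x_1, t) \in \Src \times (0,\infty)$, the set of points $y \in \Src$ with $d_g(x_1,x) + d_g(x,y) = $ something testable — more cleanly: the data determines the function $y \mapsto \inf\{ d_g(x_1,x) : d_g(x,y) \text{ arbitrary}\}$ along the cone, so that each interior $x$ is encoded by the pair $(x_1, s)$ with $s = d_g(x_1,x)$, and $d_g(x,\cdot)|_{\Src}$ is read off as $d_g(x,y) = $ the first time a wave from a neighborhood of this cone-point reaches $y$.

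Concretely, the reconstruction of $R_\Src(N)$ goes as follows. For $x_1 \in \Src$, $s>0$, and $y\in \Src$, declare that the triple $(x_1,s,y)$ is \emph{admissible} if for every $\epsilon>0$ there exist $\epsilon' \in (0,\epsilon)$, $t_0 > 0$ with $|t_0 - s| < \epsilon$, such that \eqref{Future test for distances 2} holds for $\mathcal C = (0,\infty)\times B_g(x_1,\epsilon')$ and $\mathcal B = (t_0-\epsilon',t_0)\times B_g(y,\epsilon')$ — i.e.\ no part of the cone from $\mathcal C$ of travel time $\approx s$ lies over $B_g(y,\epsilon')$; this is exactly the statement $d_g(x_1,y) \ge s$ in the limit, which is not yet what we want. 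Instead, one tests intersection with a sphere: the boundary $\partial \mathcal I^+(\mathcal C) \cap (\{t_1\}\times N)$ is the metric sphere $S_g(x_1,t_1)$, and the data sees when $\mathcal B$ straddles this boundary. The right formulation, which I would carry out carefully, is: the data determines for all $x_1 \in \Src$, $r \ge 0$, $y \in \Src$, $\rho \ge 0$ whether $B_g(x_1,r) \cap B_g(y,\rho) \ne \emptyset$ (test \eqref{Future test for distances 2} with two cones), hence whether there exists $x \in N$ with $d_g(x_1,x) \le r$ and $d_g(x,y) \le \rho$; minimizing $\rho$ over all $r$-compatible configurations yields, for each ``formal interior point'' represented by a coherent family of balls, the value $d_g(x,y)$. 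Passing to the limit over shrinking balls and using completeness of $N$ to extract an actual point $x$ realizing the limiting distances, I obtain every function $d_g(x,\cdot)|_\Src \in R_\Src(N)$, and conversely every element of $R_\Src(N)$ arises this way.

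The main obstacle I anticipate is the passage from \emph{testable ball-intersection relations} to an \emph{actual interior point} $x$ and its distance function: one must verify that a consistent family $\{(r_i, \rho_i)\}$ of admissible radius pairs, indexed by a net of shrinking balls around the same location, determines a well-defined limit $d_g(x,\cdot)|_\Src$, that distinct interior points give distinct such functions, and that every point of $N$ is reached by \emph{some} cone from $\Src$ — this last point is where one invokes that $N$ is connected and that $\Src$ is open and nonempty, so $\bigcup_{t>0} B_g(x_1,t) = N$ for any $x_1 \in \Src$ by completeness. The triangle inequality $|d_g(x,y) - d_g(x,y')| \le d_g(y,y')$ gives equicontinuity of $R_\Src(N)$ in $C(\Src)$, which is what makes the limiting procedure well-posed. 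The finite speed of propagation and unique continuation inputs are exactly those already packaged in Lemmas \ref{lem_nonrad_test} and \ref{lem:finding the distance function}, so no new PDE machinery is needed; the work is purely in organizing the metric-geometric extraction.
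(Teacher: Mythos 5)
There is a genuine gap, and it sits exactly where you flagged ``the main obstacle'': the passage from testable relations between sets anchored in $\Src$ to the distance function of a \emph{specific} interior point $x$. The concrete mechanism you propose --- determining for all $x_1,y\in\Src$ and $r,\rho\ge 0$ whether $B_g(x_1,r)\cap B_g(y,\rho)\neq\emptyset$ --- cannot work, because on a complete Riemannian manifold (a length space) one has $B_g(x_1,r)\cap B_g(y,\rho)\neq\emptyset$ if and only if $d_g(x_1,y)<r+\rho$. Hence the pairwise ball-intersection data is exactly equivalent to $d_g|_{\Src\times\Src}$, which is already part of the given data \eqref{Response Data}, and it carries no information whatsoever about interior points. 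The subsequent ``coherent family of balls'' and ``formal interior point'' construction is therefore built on an empty foundation: a sphere $S_g(x_1,r)$ contains many points with wildly different distances to $y$, and nothing in your scheme isolates one of them.

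The paper resolves this with a genuinely different device: a \emph{covering} condition $B(x,r+\epsilon)\subset\overline{B(y,r+s)\cup B(z,R)}$, where $x=\gamma_{y,\xi}(s)$ lies on a geodesic issued from $y\in\Src$. The point is that $\overline{B(x,r)}\setminus B(y,r+s)$ collapses to the single point $p=\gamma_{y,\xi}(s+r)$ (any other point $q$ with $d(q,x)=r$ and $d(q,y)=r+s$ would force a broken minimizer through $x$, which is impossible), so the covering holds precisely when $d_g(p,z)\le R$; this is Proposition \ref{finding the distance function}, and the covering condition itself is rendered testable from $\Lambda_\Src$ via approximate controllability and the Blagovestchenskii identity (Lemma \ref{crossing balls and functions}). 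Two further ingredients you omit are essential: the collapse to a single point only holds for $s+r$ below the cut distance $\tau(y,\xi)$, so $\tau$ must first be recovered from the data (Proposition \ref{reconstruction of cut distance from data}); and one must show that every point of $N$ is of the form $\gamma_{y,\xi}(t)$ with $y\in\Src$ and $t<\tau(y,\xi)$ (Lemma \ref{minimizing geodesics of S fill N}), which is strictly stronger than the surjectivity of $\exp$ you invoke and genuinely uses that $\Src$ is open. Without an analogue of the point-isolation step, the proposal does not reconstruct $R_\Src(N)$.
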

This is to be proved in several steps. 
Let $T, \epsilon >0$. For each $r>\epsilon$ and  $x \in N$ we define a set 
$$
S_\epsilon(x,r):=(T-(r-\epsilon),T)\times B(x,\epsilon)
$$

We denote for any measurable $A\subset N$ the function space 
$$
L^2(A):=\{u\in L^2(N): \hbox{supp}(U) \subset \overline{A}\}.
$$
Recall that for any $f\in C^\infty_0(\R_+\times N)$ the solution $w^f(T,\cdot)\in L^2(N)$.

\begin{lemma}
\label{crossing balls and functions}
Let $p,y,z\in N$, $\epsilon>0$ and $\ell_p,\ell_y,\ell_y >\epsilon$. Then the following are equivalent: 
\begin{enumerate}[(i)]
\item We have
\begin{equation}
\label{cond 3}
B(p,\ell_p)\subset \overline{B(y,\ell_y) \cup B(z,\ell_z)}.
\end{equation}
\item Suppose that
\begin{equation}
\label{cond 4}
\begin{array}{c}
\textrm{for all } f \in C^\infty_0(S_\epsilon(p,\ell_p)) \textrm{ there exists } (f_j)_{j=1}^\infty \subset C^\infty_0(S_\epsilon(y,\ell_y)\cup S_\epsilon(z,\ell_z)) 
\\
\textrm{ such that } \|w^f(T,\cdot)-w^{f_j}(T, \cdot)\|_{L^2(N)} \stackrel{j\rightarrow \infty}{\longrightarrow} 0.
\end{array}
\end{equation}
Here $w^f, w^{f_j}$ is the solution of \eqref{eq_wave_test2} with $\R^n$ replaced by $N$.
\end{enumerate}
\end{lemma}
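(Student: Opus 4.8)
The plan is to deduce the equivalence from two classical properties of the wave equation \eqref{eq_wave_test2} on $N$: finite speed of propagation, and approximate controllability (Theorem \ref{density} in the appendix). First I would record what these give for a single box. Fix $x\in N$ and $\ell>\epsilon$ (assuming, as we may, that $\ell\le T+\epsilon$ so that $S_\epsilon(x,\ell)\subset(0,\infty)\times N$). If $f\in C_0^\infty(S_\epsilon(x,\ell))$, then $\operatorname{supp}(f)$ is a compact subset of $(T-(\ell-\epsilon),T)\times B(x,\epsilon)$, so by finite speed of propagation and the triangle inequality every point of $\operatorname{supp}(w^f(T,\cdot))$ lies within Riemannian distance $\ell$ of $x$; hence $w^f(T,\cdot)\in L^2(B(x,\ell))$. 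Conversely, using completeness of $(N,g)$ one checks that the domain of influence of $S_\epsilon(x,\ell)$ at time $T$ is exactly the ball $\{q:d_g(q,B(x,\epsilon))<\ell-\epsilon\}=B(x,\ell)$, so Theorem \ref{density} gives that the reachable set
\[
\mathcal R(x,\ell):=\{w^f(T,\cdot)\;:\;f\in C_0^\infty(S_\epsilon(x,\ell))\}
\]
is dense in $L^2(B(x,\ell))$.

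For the implication \eqref{cond 3} $\Rightarrow$ \eqref{cond 4}, I would first show that the reachable set from the union $S_\epsilon(y,\ell_y)\cup S_\epsilon(z,\ell_z)$ is dense in $L^2(B(y,\ell_y)\cup B(z,\ell_z))$. Indeed, for $f_1\in C_0^\infty(S_\epsilon(y,\ell_y))$ and $f_2\in C_0^\infty(S_\epsilon(z,\ell_z))$ we have $f_1+f_2\in C_0^\infty(S_\epsilon(y,\ell_y)\cup S_\epsilon(z,\ell_z))$ and $w^{f_1+f_2}=w^{f_1}+w^{f_2}$ by linearity, so the closure of this reachable set contains $\overline{\mathcal R(y,\ell_y)}+\overline{\mathcal R(z,\ell_z)}=L^2(B(y,\ell_y))+L^2(B(z,\ell_z))$, and a partition of unity subordinate to $\{B(y,\ell_y),B(z,\ell_z)\}$ shows the latter is dense in $L^2(B(y,\ell_y)\cup B(z,\ell_z))$; since finite speed of propagation also keeps the reachable set inside $L^2(B(y,\ell_y)\cup B(z,\ell_z))$, it is dense there. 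Now if $B(p,\ell_p)\subset\overline{B(y,\ell_y)\cup B(z,\ell_z)}$, then for every $f\in C_0^\infty(S_\epsilon(p,\ell_p))$ we have $w^f(T,\cdot)\in L^2(B(p,\ell_p))\subset L^2(B(y,\ell_y)\cup B(z,\ell_z))$, which is therefore approximated in $L^2(N)$ by $w^{f_j}(T,\cdot)$ with $f_j\in C_0^\infty(S_\epsilon(y,\ell_y)\cup S_\epsilon(z,\ell_z))$; this is \eqref{cond 4}.

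For the converse \eqref{cond 4} $\Rightarrow$ \eqref{cond 3} I would argue by contraposition. If \eqref{cond 3} fails, there is $q\in B(p,\ell_p)$ with $d_g(q,y)>\ell_y$ and $d_g(q,z)>\ell_z$; since $B(p,\ell_p)\setminus\overline{B(y,\ell_y)\cup B(z,\ell_z)}$ is open, some ball $B(q,\delta)$ lies inside it. Choose $\psi\in C_0^\infty(B(q,\delta))$ with $\|\psi\|_{L^2(N)}=1$. By the density of $\mathcal R(p,\ell_p)$ in $L^2(B(p,\ell_p))$ there is $f\in C_0^\infty(S_\epsilon(p,\ell_p))$ with $\|w^f(T,\cdot)-\psi\|_{L^2(N)}<\tfrac12$, whence $\|w^f(T,\cdot)\|_{L^2(B(q,\delta))}>\tfrac12$. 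On the other hand, by finite speed of propagation every $w^{f_j}(T,\cdot)$ with $f_j\in C_0^\infty(S_\epsilon(y,\ell_y)\cup S_\epsilon(z,\ell_z))$ is supported in $\overline{B(y,\ell_y)\cup B(z,\ell_z)}$ and so vanishes a.e. on $B(q,\delta)$; hence $\|w^f(T,\cdot)-w^{f_j}(T,\cdot)\|_{L^2(N)}\ge\|w^f(T,\cdot)\|_{L^2(B(q,\delta))}>\tfrac12$ for all $j$, so \eqref{cond 4} fails for this $f$.

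The only genuinely deep ingredient is the approximate controllability statement, Theorem \ref{density}, which ultimately rests on a Tataru-type unique continuation theorem; since it is available to us, the main obstacle reduces to careful bookkeeping. The steps that warrant a little care are: the identification of the domain of influence of $S_\epsilon(x,\ell)$ at time $T$ with the full open ball $B(x,\ell)$ (via Hopf--Rinow and the compactness of $\operatorname{supp}(f)$ inside the open box), the harmless identification of $L^2$ over open and closed balls since geodesic spheres have measure zero, and the partition-of-unity splitting $C_0^\infty(U\cup V)=C_0^\infty(U)+C_0^\infty(V)$ underlying the density of $L^2(U)+L^2(V)$ in $L^2(U\cup V)$.
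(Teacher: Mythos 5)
Your proof is correct and follows essentially the same route as the paper: finite speed of propagation confines the reachable states to the relevant balls, approximate controllability (Theorem \ref{density}) gives density of the reachable sets, the forward implication splits $w^f(T,\cdot)$ over the two balls (you via a partition of unity, the paper via the characteristic function of $B(y,\ell_y)$ plus the measure-zero boundary remark), and the converse uses a bump supported in the nonempty open set $B(p,\ell_p)\setminus\overline{B(y,\ell_y)\cup B(z,\ell_z)}$. The differences are purely cosmetic.
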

\begin{proof}

Suppose that \eqref{cond 3} is valid. Let $ f \in C^\infty_0(S_\epsilon(p,\ell_p))$, then by the finite speed of wave propagation it holds that 
$$
\supp w^f(T) \subset B(p,\ell_p) \subset  \overline{B(y,\ell_y) \cup B(z,\ell_z)}
$$

Let $\chi(x)$ 
be the characteristic function of the ball $B(y,\ell_y)$ and set $w^f_y(T,x):=\chi(x) w^f(T,x)$ and $w^f_z(T,x):=w^f(T,x)-w^f_y(T,x)$. 
Since the boundary of a geodesic ball is a set of measure zero (see \cite{Oksanen1}), it holds that $w^f_y(T,\cdot)\in L^2(B(y,\epsilon))$ and $w^f_z(T,\cdot)\in L^2(B(z,\epsilon))$. 
By approximate controllability there exist sequences $(f_y^j)_{j=1}^\infty \subset C^\infty_0(S_\epsilon(y,\ell_y))$ and $(f_z^j)_{j=1}^\infty \subset C^\infty_0(S_\epsilon(z,\ell_z))$ such that sequences $(w^{f_y^j}(T,\cdot))_{j=1}^\infty$ and $(w^{f_z^j}(T,\cdot))_{j=1}^\infty$ converge to $w^f_y(T,\cdot)$ and $w^f_z(T,\cdot)$, respectively, in $L^2(N)$.
Therefore sequence 
$$
f_j=f^j_y+f_z^j \in C^\infty_0(S_\epsilon(y,\ell_y)\cup S_\epsilon(z,\ell_z)), \: j=1,2,\ldots$$ 
satisfies \eqref{cond 4}. 

Suppose that \eqref{cond 3} is not valid. Then the open set
$$
U:=B(p,\ell_p)\setminus \overline{(B(y,\ell_y)\cup B(z,\ell_z))}
$$
is not empty. By approximate controllabilty, we can choose $f \in  C^\infty_0(S_\epsilon(p,\ell_p))$ such that $\|w^f(T,\cdot)\|_{L^2(U)}>0$. By finite speed of wave propagation it holds that 
$$
\inf \{\|w^{f}(T,\cdot)-w^h(T,\cdot)\|_{L^2(N)}: h\in C^\infty_0(S_\epsilon(y,\ell_y)\cup S_\epsilon(z,\ell_z))\}>0.
$$
Therefore \eqref{cond 4} is not true.
%
\end{proof}
For any point $(p,\xi) \in TM, \|\xi\|_g=1$ we will denote the cut distance function
$$
\tau(p,\xi)=\sup\{t>0:d_g(p,\gamma_{p,\xi}(t))=t\}.
$$
Let $\alpha,\beta:(0,1) \rightarrow N$ be curves such that $\alpha(1)=\beta(0)$. Then we denote by $\alpha\beta$ the concatenated curve.
\begin{lemma}
\label{Finding cut dist}
Let $(N,g)$ be a complete Riemannian manifold. Let $x,y \in N$ and let $\gamma_{y,\xi}$ be a distance minimizing geodesic from $y$ to $x$. Let $s:=d_g(x,y)$. Let $r>0$. If $\tau(y,\xi)<s+r$, then 
\begin{eqnarray}
\label{Ball condition for cut distance}
&&\textrm{there exists }\epsilon >0 \textrm{ such that } B(x,r+\epsilon)\subset \overline{B(y,s+r)}.
\end{eqnarray}
Also if \eqref{Ball condition for cut distance} is valid then $\tau(y,\xi)\leq s+r.$

Moreover, we have  
$$
\tau(y,\xi)=\inf\{s+r>0: r,s >0, \gamma_{y,\xi}([0,s])\subset \Src, \eqref{Ball condition for cut distance} \hbox{ holds} \}.
$$
\end{lemma}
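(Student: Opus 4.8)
The plan is to reduce the infimum identity to the two implications stated in the lemma, and then prove those implications. Granting them, the implication ``\eqref{Ball condition for cut distance} $\Rightarrow$ $\tau(y,\xi)\le s+r$'' shows that $\tau(y,\xi)$ is a lower bound for the set $\{\,s+r : r,s>0,\ \gamma_{y,\xi}([0,s])\subset\Src,\ \eqref{Ball condition for cut distance}\text{ holds}\,\}$, since each admissible pair is covered by the hypotheses of the lemma with $x=\gamma_{y,\xi}(s)$. For the matching upper bound I would use the other implication together with the openness of $\Src$ at $y=\gamma_{y,\xi}(0)$: for small $s>0$ the whole segment $\gamma_{y,\xi}([0,s])$ lies in $\Src$ and $s<\tau(y,\xi)$, and then taking $r=\tau(y,\xi)-s+\delta$ for small $\delta>0$ yields $\tau(y,\xi)<s+r$, so \eqref{Ball condition for cut distance} holds by the first implication while $s+r=\tau(y,\xi)+\delta$ is arbitrarily close to $\tau(y,\xi)$. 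Thus everything reduces to the two implications.

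For ``$\tau(y,\xi)<s+r \Rightarrow \eqref{Ball condition for cut distance}$'' I would first prove the stronger inclusion $\overline{B(x,r)}\subset B(y,s+r)$ with the \emph{open} ball on the right, and then fatten by compactness. Given $q\in\overline{B(x,r)}$ one has $d_g(q,y)\le d_g(q,x)+d_g(x,y)\le r+s$, and the point is to rule out equality. If $d_g(q,y)=r+s$, the triangle inequality is saturated along $q,x,y$, so there is a unit-speed minimizing geodesic $\sigma\colon[0,s+r]\to N$ with $\sigma(0)=y$, $\sigma(s)=x$, $\sigma(s+r)=q$ (completeness guarantees $\sigma$ is defined). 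Since $\sigma|_{[0,s]}$ minimizes from $y$ to $x$, I would split into two cases: if $\dot\sigma(0)=\xi$, then $\sigma=\gamma_{y,\xi}$ on $[0,s+r]$ and its being minimizing there forces $d_g(y,\gamma_{y,\xi}(s+r))=s+r$, hence $\tau(y,\xi)\ge s+r$, contradicting the hypothesis; if $\dot\sigma(0)\ne\xi$, then $\gamma_{y,\xi}|_{[0,s]}$ and $\sigma|_{[0,s]}$ are distinct minimizing geodesics from $y$ to $x$, and concatenating $\gamma_{y,\xi}|_{[0,s]}$ with $\sigma|_{[s,s+r/2]}$ produces a distance-minimizing path from $y$ to $\sigma(s+r/2)$ with a genuine corner at $x$, which is impossible since length-minimizing curves are smooth geodesics. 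Hence $d_g(q,y)<s+r$, so $\overline{B(x,r)}\subset B(y,s+r)$. As $\overline{B(x,r)}$ is compact by Hopf--Rinow and contained in the open set $B(y,s+r)$, the positive continuous function $q\mapsto d_g(q,N\setminus B(y,s+r))$ attains a positive minimum $2\epsilon$ on it (the case $B(y,s+r)=N$ being trivial), and then $B(x,r+\epsilon)\subset B(y,s+r)\subset\overline{B(y,s+r)}$ because every point at distance $<r+\epsilon$ from $x$ lies within distance $<\epsilon$ of $\overline{B(x,r)}$.

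For the converse ``$\eqref{Ball condition for cut distance} \Rightarrow \tau(y,\xi)\le s+r$'' I would argue by contraposition. If $\tau(y,\xi)>s+r$, then for a given $\epsilon>0$ choose $0<\epsilon'<\min(\epsilon,\tau(y,\xi)-s-r)$ and set $p=\gamma_{y,\xi}(s+r+\epsilon')$. Since $s+r+\epsilon'<\tau(y,\xi)$, the geodesic $\gamma_{y,\xi}$ is minimizing on $[0,s+r+\epsilon']$, so $d_g(x,p)=r+\epsilon'<r+\epsilon$ while $d_g(y,p)=s+r+\epsilon'>s+r$; thus $p\in B(x,r+\epsilon)\setminus\overline{B(y,s+r)}$, which shows \eqref{Ball condition for cut distance} fails for this $\epsilon$, and since $\epsilon>0$ was arbitrary it fails for every $\epsilon$.

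I expect the main obstacle to be the first implication in the boundary case $s=\tau(y,\xi)$, which genuinely occurs since $s=d_g(x,y)$ only gives $s\le\tau(y,\xi)$: there the minimizing geodesic from $y$ to $x$ need not be unique, so one cannot simply conclude $\sigma=\gamma_{y,\xi}$, and it is precisely the dichotomy on $\dot\sigma(0)$ together with the broken-geodesic argument that resolves it. A smaller point to keep track of is that the pairs $(s,r)$ in the infimum must satisfy $s\le\tau(y,\xi)$ (so that $x=\gamma_{y,\xi}(s)$ really has $d_g(x,y)=s$) in addition to $\gamma_{y,\xi}([0,s])\subset\Src$, so that the hypotheses of the lemma apply to each admissible pair.
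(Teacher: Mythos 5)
Your proof is correct and follows essentially the same route as the paper's: the forward implication via $\overline{B(x,r)}\subset B(y,s+r)$ using the fact that a minimizing curve cannot have a corner at $x$ (your dichotomy on $\dot\sigma(0)$ is the paper's case analysis on whether the minimizer $x\to q$ continues $\gamma_{y,\xi}$), compactness for the $\epsilon$-fattening, and the converse by testing points $\gamma_{y,\xi}(s+r+\delta)$. You additionally spell out how the infimum identity follows from the two implications (and flag the $s\le\tau(y,\xi)$ caveat), which the paper leaves implicit; this is a welcome completion rather than a different method.
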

\begin{proof}
Let $r>0$ and denote $p=\gamma_{y,\xi}(s+r)$. 

\noindent
Suppose that \eqref{Ball condition for cut distance} is valid. Let $\delta\in (0,\epsilon)$ and consider a point 
$$
z=\gamma_{y,\xi}(s+r+\delta)\in B(x,r+\epsilon).
$$
By \eqref{Ball condition for cut distance} $d_g(z,y)\leq s+r$. Thus $\tau(y,\xi)< s+r+\delta$. Since $\delta$ was arbitrary we have $\tau(y,\xi)\leq s+r$.

Suppose that $\tau(y,\xi)< s+r$. We show first that
\begin{equation}
\label{eq: closure inclusion}
\overline{B(x,y)}\subset B(y,s+r).
\end{equation}
By triangle inequality it suffices to show that  $\p B(x,y)\subset B(y,s+r)$. Let $z \in \p B(x,r)$. By triangle inequality $d_g(z,y)\leq s+r$. Let $\alpha$ be a minimizing geodesic from $x$ to $z$. Suppose first that $\alpha$ is not the geodesic continuation of segment $\gamma_{y,\xi}([0,s])$. Since a curve $\gamma_{y,\xi}\alpha$ has a length $s+r$ and it is not smooth at $x$, it must hold that $d_g(z,y)<s+r$. If $\alpha$ is the geodesic continuation of segment $\gamma_{y,\xi}((0,s))$, then $z=\gamma_{y,\xi}(s+r)=p$. Since $\tau(y,\xi)< s+r$, it holds that $d_g(y,p)<s+r$. Thus \eqref{eq: closure inclusion} follows. Therefore dist$_g(\p B(x,r), \p B(y,s+r))>0$ and \eqref{Ball condition for cut distance} is valid.

\end{proof}

Next we provide a method to find the cut distance function $\tau$.
\begin{proposition}
\label{reconstruction of cut distance from data}
For any $y\in \Src$ and $\xi\in S_yN$ we can find $\tau(y,\xi)$ from the local source-to-solution data \eqref{Response Data}. 
\end{proposition}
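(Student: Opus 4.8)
The plan is to read off $\tau(y,\xi)$ from the ball-inclusion characterisation of Lemma \ref{Finding cut dist}, checking that each ingredient is decidable from the data \eqref{Response Data}. The only analytic input is the standard fact that the source-to-solution operator determines the inner products $\langle w^f(T,\cdot),w^h(T,\cdot)\rangle_{L^2(N)}$ for all $f,h\in C^\infty_0((0,T)\times\Src)$ --- this is the Blagovestchenskii identity, see e.g.\ \cite{Katchalov2001} --- so that, via the Gram matrices of the wave solutions at time $T$, the approximate controllability statements of Lemma \ref{crossing balls and functions} become decidable.

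First I would fix a base point on the geodesic. Since $g|_\Src$ is known and $\Src$ is open with $y\in\Src$, we can integrate the geodesic equation from $(y,\xi)$ and determine $\gamma_{y,\xi}(s)$ for all $s>0$ small enough that $\gamma_{y,\xi}([0,s])\subset\Src$; among these pick $s_0>0$ with $d_g(y,\gamma_{y,\xi}(s_0))=s_0$, which is checkable from $d_g|_{\Src\times\Src}$ because $\gamma_{y,\xi}(s_0)\in\Src$. Such $s_0$ exists (any value below the injectivity radius at $y$ works), it satisfies $s_0\le\tau(y,\xi)$, and $x_0:=\gamma_{y,\xi}(s_0)$ is a known point of $\Src$ with $d_g(x_0,y)=s_0$ along the minimising geodesic $\gamma_{y,\xi}|_{[0,s_0]}$. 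Applying the two one-sided implications of Lemma \ref{Finding cut dist} with $x=x_0$: for $r>0$, the inclusion ``$\exists\,\epsilon>0:\ B(x_0,r+\epsilon)\subset\overline{B(y,s_0+r)}$'' holds when $s_0+r>\tau(y,\xi)$ and fails when $s_0+r<\tau(y,\xi)$, whence
\begin{equation*}
\tau(y,\xi)=s_0+\inf\bigl\{\,r>0 \;:\; \exists\,\epsilon>0 \text{ with } B(x_0,r+\epsilon)\subset\overline{B(y,s_0+r)}\,\bigr\}.
\end{equation*}

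It remains to decide, for given $r>0$ and small $\epsilon>0$, whether $B(x_0,r+\epsilon)\subset\overline{B(y,s_0+r)}$. For this I would invoke Lemma \ref{crossing balls and functions} with $p=x_0$, $\ell_p=r+\epsilon$, and the degenerate choice $z=y$, $\ell_y=\ell_z=s_0+r$ (the lemma imposes no distinctness on $p,y,z$, and then $B(y,\ell_y)\cup B(z,\ell_z)=B(y,s_0+r)$): its condition (i) is exactly the desired inclusion, while its condition (ii) refers only to the solution map $f\mapsto w^f(T,\cdot)$ for sources supported in the known sets $S_\epsilon(x_0,r+\epsilon)$ and $S_\epsilon(y,s_0+r)$, which lie in $(0,\infty)\times\Src$ as soon as $\epsilon$ is small enough that $B(x_0,\epsilon),B(y,\epsilon)\subset\Src$ and $T$ is large enough that the time-intervals $(T-r,T)$ and $(T-(s_0+r-\epsilon),T)$ sit inside $(0,\infty)$. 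Since the inner products of such solutions at time $T$ are determined by $\Lambda_\Src$, condition (ii) --- and hence the inclusion --- is decidable; letting $\epsilon$ run through a sequence $\epsilon_k\downarrow0$ then decides the ``$\exists\,\epsilon$'' clause.

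The one bookkeeping point is the auxiliary time $T$, which is fixed in Lemma \ref{crossing balls and functions} and is not a priori known to exceed $\tau(y,\xi)$: for a given $T$ the procedure above only probes $r<T-s_0$, so it recovers $\min\{\tau(y,\xi),\,T\}$ rather than $\tau(y,\xi)$. This is harmless, since the values obtained are non-increasing in $T$ and equal $\tau(y,\xi)$ once $T>\tau(y,\xi)$; thus $\tau(y,\xi)$ is recovered by letting $T\to\infty$. I expect no genuine obstacle here: all of the substance is already contained in Lemmas \ref{Finding cut dist} and \ref{crossing balls and functions}, and this proposition is their bookkeeping-level assembly together with the Blagovestchenskii identity.
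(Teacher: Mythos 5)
Your proposal is correct and follows essentially the same route as the paper: determine the geodesic segment inside $\Src$ from $g|_\Src$, characterise $\tau(y,\xi)$ via the ball inclusion of Lemma \ref{Finding cut dist}, and decide that inclusion through Lemma \ref{crossing balls and functions} with the degenerate choice $z=y$ together with the Blagovestchenskii identity. Your extra bookkeeping (the minimality check $d_g(y,x_0)=s_0$ and letting the auxiliary time $T\to\infty$) only makes explicit details the paper leaves implicit.
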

\begin{proof}
Let $y\in \Src$ and  $\xi\in S_yN$. Given the data \eqref{Response Data} we can find the geodesic segment $\gamma_{y,\xi}([0,s])$ for small values $s>0$. 

Let $s>0$ be so small that $\gamma_{y,\xi}([0,s])\subset \Src$. We denote $x=\gamma_{y,\xi}(s)$. Let $r>0$. Consider the relation \eqref{Ball condition for cut distance}. By Lemma \ref{Finding cut dist}, relation \eqref{Ball condition for cut distance} determines $\tau(y,\xi)$.

Choose $\epsilon>0$ so small that
$$
B(y,\epsilon)\cup B(x,\epsilon)\subset \Src.
$$
By taking $z=y$, $\ell_y=r+s=\ell_z$, $\ell_x=r+\epsilon$ as in Lemma \ref{crossing balls and functions} we see that \eqref{Ball condition for cut distance} is equivalent with relation \eqref{cond 4}. Using the Blagovestchenskii identity (see \eqref{Blagovestchenskii identity} in the appendix) we see that the source-to-solution data \eqref{Response Data} determines \eqref{cond 4}.
\end{proof}


\begin{lemma}
\label{minimizing geodesics of S fill N}
It holds that
$$\{\gamma_{y,\xi}(t) \in N: \: y\in \Src, \; \xi \in S_yN, \; t< \tau(y,\xi)\}=N.$$
\end{lemma}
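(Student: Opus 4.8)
The plan is to establish the nontrivial inclusion $N \subseteq \{\gamma_{y,\xi}(t) : y\in\Src,\ \xi\in S_yN,\ t<\tau(y,\xi)\}$; the reverse inclusion is immediate. Fix $q\in N$. If $q\in\Src$ we are done at once: take $y=q$, any $\xi\in S_qN$, and $t=0<\tau(y,\xi)$, using that $\tau>0$ everywhere on $SN$. So from now on assume $q\notin\Src$.

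First I would produce a minimizing geodesic running from $\Src$ to $q$. By completeness of $N$ (here one tacitly uses that $N$ is connected) and the Hopf--Rinow theorem, pick $y_0\in\Src$ and a unit-speed minimizing geodesic $\gamma:[0,\Lambda]\to N$ with $\gamma(0)=q$, $\gamma(\Lambda)=y_0$ and $\Lambda=d_g(q,y_0)>0$, extended maximally (hence to all of $\R$) by completeness. Since $\Src$ is open and $\gamma(\Lambda)\in\Src$, there is $\rho\in(0,\Lambda]$ with $\gamma(r)\in\Src$ for all $r\in(\Lambda-\rho,\Lambda]$. For any such $r$, reversing the minimizing segment $\gamma|_{[0,r]}$ gives $y:=\gamma(r)\in\Src$ and $\xi:=-\dot\gamma(r)\in S_yN$ with $\gamma_{y,\xi}(r)=q$ and $d_g(y,\gamma_{y,\xi}(s))=s$ for $0\le s\le r$; in particular $r\le\tau(y,\xi)$. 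Thus every such $r$ already exhibits $q$ as a point $\gamma_{y,\xi}(t)$ with $t=r$, and the only thing left is to choose $r$ so that the inequality $t<\tau(y,\xi)$ is \emph{strict}.

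This last step is the main obstacle, and I would handle it as follows. Put $c(r):=\tau(\gamma(r),-\dot\gamma(r))$, so $c(r)\ge r$ on $(\Lambda-\rho,\Lambda]$ by the preceding paragraph. If $c(r)=r$, this says precisely that $q=\gamma(0)$ is the cut point of $\gamma(r)$ along the backward geodesic $s\mapsto\gamma(r-s)$, with connecting minimizing geodesic $\gamma|_{[0,r]}$. By the standard reciprocity of cut points along a fixed minimizing geodesic (a point $p$ is the cut point of $p'$ along such a geodesic iff $p'$ is the cut point of $p$ along it; see e.g.\ \cite{Katchalov2001}), this is equivalent to $\gamma(r)$ being the cut point of $q$ along $t\mapsto\gamma(t)$, i.e.\ to $r=\tau(q,\dot\gamma(0))$ — a single value of $r$ (and no value at all if $\tau(q,\dot\gamma(0))=\infty$). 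Hence $\{r:c(r)=r\}$ has at most one element, while $(\Lambda-\rho,\Lambda]$ is infinite, so I may choose $r_0\in(\Lambda-\rho,\Lambda]$ with $r_0\ne\tau(q,\dot\gamma(0))$, whence $c(r_0)>r_0$.

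Setting $y:=\gamma(r_0)\in\Src$, $\xi:=-\dot\gamma(r_0)\in S_yN$ and $t:=r_0$, we get $\gamma_{y,\xi}(t)=\gamma(0)=q$ with $t=r_0<c(r_0)=\tau(y,\xi)$, which proves $q$ belongs to the claimed set and completes the argument. In summary, the entire content lies in the strict inequality $t<\tau$: openness of $\Src$ supplies a whole interval of candidate base points on the minimizing geodesic toward $q$, and cut-point reciprocity shows that at most one of them can be "blocked" by $q$ being its cut point, so an admissible choice always exists. The ingredients used — existence of minimizing geodesics via Hopf--Rinow, positivity of $\tau$, the fact that subsegments of minimizing geodesics are minimizing, and reciprocity of cut points — are all standard in Riemannian geometry and require no PDE input.
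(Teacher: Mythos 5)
Your proof is correct and takes essentially the same route as the paper: both arguments slide the base point of a minimizing geodesic ending in $\Src$ along that geodesic (staying inside $\Src$) and invoke the symmetry of cut points to upgrade $t\le\tau(y,\xi)$ to the strict inequality $t<\tau(y,\xi)$. The only cosmetic difference is that the paper shows every interior base point works via a triangle-inequality contradiction, whereas you rule out at most one bad base point by a counting argument; both suffice.
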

\begin{proof}
Let $p\in N$ and choose any $y \in \Src$. Let $\gamma_{y,\xi}$ be a distance minimizing geodesic from $y$ to $p$. We denote by $r=d_g(y,p)$. Then it holds that $r\leq \tau(y,\xi)$. Choose $s\in (0,r)$ such that $y_1:=\gamma_{y,\xi}(s),\: \gamma_{y,\xi}([0,s]) \subset \Src$. Let $\xi_1:=\dot{\gamma}_{y,\xi}(s)$. We will show that $r-s< \tau(y_1,\xi_1)$ and this proves the claim of this lemma. 

Suppose that $\tau(y_1,\xi_1)\leq r-s$. By the symmetry of cut points, it holds that $\tau(p,\eta)\leq r-s$, where $\eta:=-\dot{\gamma}_{y,\xi}(r)$. Thus there exists $t \in(0,s)$ such that for a point $z=:\gamma_{y,\xi}(t)$ it holds $d_g(p,z)<r-t$. Then it  also holds that
$$
d_g(y,p)\leq d_g(y,z)+d_g(z,p)<t+r-t=r.
$$
This is a contradiction and therefore $r-s<\tau(y_1,\xi_1)$.
\end{proof}
Notice that the assumption $\Src$ is open is crucial in Lemma \ref{minimizing geodesics of S fill N}. For instance consider the cylinder 
$$
\{e^{i\pi t}\in \C:t\in [-1,1]\}\times (-1,1),
$$ 
and let $\Src = \{1\}\times(-1,1)$ and $p=(-1,0)$. Then it holds that every point in $\Src$ is a cut point of $p$.
\begin{proposition}
\label{finding the distance function}
Let $z, y \in \Src$, $\xi \in T_y\Src$, $\|\eta\|=1$ and $\widetilde r < \tau(y, \eta)$. Then the local source-to-solution data \eqref{Response Data} determines $d_g(p,z)$, where $p = \gamma_{y, \xi}(\widetilde r)$. 
\end{proposition}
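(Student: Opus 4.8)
\emph{Proof plan.}
The plan is to reduce the statement to a family of set inclusions between geodesic balls, each of which is decided by the data via Lemma \ref{crossing balls and functions} and the Blagovestchenskii identity. First I would use the data to produce a short minimizing initial arc of $\gamma_{y,\xi}$: as in the proof of Proposition \ref{reconstruction of cut distance from data} the segment $\gamma_{y,\xi}([0,s])$ is recovered for small $s>0$, and $\tau(y,\xi)$ is already known, so I fix $s\in(0,\widetilde r)$ with $\gamma_{y,\xi}([0,s])\subset\Src$, put $x=\gamma_{y,\xi}(s)\in\Src$ and $r=\widetilde r-s>0$. Then $p=\gamma_{y,\xi}(s+r)$ with $s+r=\widetilde r<\tau(y,\xi)$, so $\gamma_{y,\xi}|_{[0,s+r]}$ is the \emph{unique} minimizing geodesic from $y$ to $p$; in particular $d_g(y,x)=s$, $d_g(x,p)=r$, $d_g(y,p)=r+s$, all of which are known.

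The geometric core is a ``crossing balls'' estimate. For small $\epsilon>0$ I would introduce
\[
\Phi(\epsilon)=\sup\{\,d_g(z,w):\ w\in B_g(x,r+\epsilon),\ d_g(y,w)>r+s\,\},
\]
noting this supremum is over a nonempty set because $\gamma_{y,\xi}(s+r+\delta)$ belongs to it for every small $\delta\in(0,\epsilon)$ (here $s+r<\tau(y,\xi)$ is used). Since $(N,g)$ is complete, $\overline{B_g(y,\rho)\cup B_g(z,\rho')}=\{d_g(y,\cdot)\le\rho\}\cup\{d_g(z,\cdot)\le\rho'\}$, and hence, for $\ell>\epsilon$,
\[
B_g(x,r+\epsilon)\subset\overline{B_g(y,r+s)\cup B_g(z,\ell)}\iff\Phi(\epsilon)\le\ell.
\]
I would then show $\Phi(\epsilon)\downarrow d_g(p,z)$ as $\epsilon\downarrow0$. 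The inequality $\Phi(\epsilon)\ge d_g(p,z)$ is immediate from the points $\gamma_{y,\xi}(s+r+\delta)$ as $\delta\to0$. For the opposite bound, given $\epsilon_k\downarrow0$ and $w_k$ in the corresponding annular set with $d_g(z,w_k)$ within $1/k$ of $\Phi(\epsilon_k)$, Hopf--Rinow keeps the $w_k$ in a compact set; any subsequential limit $w_*$ satisfies $d_g(x,w_*)\le r$ together with $d_g(y,w_*)=r+s=d_g(y,x)+d_g(x,w_*)$ (the second equality forces $d_g(x,w_*)=r$), so the broken path from $y$ to $w_*$ through minimizing segments $y\to x$ and $x\to w_*$ is a shortest path, hence a smooth geodesic coinciding with $\gamma_{y,\xi}$, and therefore $w_*=\gamma_{y,\xi}(s+r)=p$. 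Thus $d_g(z,w_k)\to d_g(p,z)$ and $\limsup_{\epsilon\to0}\Phi(\epsilon)\le d_g(p,z)$. Consequently $d_g(p,z)=\inf_{\epsilon>0}\Phi(\epsilon)$.

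Finally I would express each of these inclusions through the data. For $\epsilon$ small enough that $B_g(x,\epsilon),B_g(y,\epsilon),B_g(z,\epsilon)\subset\Src$ and $\ell>\epsilon$, Lemma \ref{crossing balls and functions} applied with the points $x,y,z$ and radii $r+\epsilon$, $r+s$, $\ell$ shows that $B_g(x,r+\epsilon)\subset\overline{B_g(y,r+s)\cup B_g(z,\ell)}$ is equivalent to the approximate controllability statement \eqref{cond 4}; and \eqref{cond 4} is decided by the inner products $\langle w^{f_i}(T,\cdot),w^{f_j}(T,\cdot)\rangle_{L^2(N)}$ of waves driven from $\Src$ at a (large) fixed time $T$, which the Blagovestchenskii identity computes from $\Lambda_\Src$. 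Hence every test ``$B_g(x,r+\epsilon)\subset\overline{B_g(y,r+s)\cup B_g(z,\ell)}$'' is decided by \eqref{Response Data}, and so is
\[
d_g(p,z)=\inf\{\,\ell>0:\ B_g(x,r+\epsilon)\subset\overline{B_g(y,r+s)\cup B_g(z,\ell)}\text{ for some small }\epsilon>0\,\}.
\]
I expect the main obstacle to be the limit $\Phi(\epsilon)\to d_g(p,z)$: it relies crucially on $\widetilde r<\tau(y,\xi)$, which simultaneously keeps the annular sets nonempty and, through uniqueness of minimizers below the cut distance combined with Hopf--Rinow compactness, forces the limiting configuration to reduce to the single point $p$. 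A secondary technical point is the open/closed-ball bookkeeping needed to line up Lemma \ref{crossing balls and functions} with the distance formula, which is taken care of by the identity for $\overline{B_g(y,\rho)\cup B_g(z,\rho')}$ quoted above.
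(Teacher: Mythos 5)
Your proposal is correct and follows essentially the same route as the paper's proof: fix $x=\gamma_{y,\xi}(s)$ and $r=\widetilde r-s$, test the inclusions $B(x,r+\epsilon)\subset\overline{B(y,r+s)\cup B(z,R)}$ via Lemma \ref{crossing balls and functions} together with the Blagovestchenskii identity, and identify $d_g(p,z)$ with the infimum of admissible $R$ by a compactness argument in which near-extremal points of $B(x,r+\epsilon)\setminus\overline{B(y,r+s)}$ are forced to converge to $p$ through the broken-geodesic/uniqueness-below-cut-locus argument. Your reformulation via the function $\Phi(\epsilon)$ is only a repackaging of the paper's two-sided estimate for $R^\ast$; the substance is identical.
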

\begin{proof}
Let $s\in (0,\widetilde{r})$ be such that $\gamma_{y,\xi}([0,s])\subset \Src$. We denote by $x=\gamma_{y,\xi}(s)$. Let $r:=\widetilde{r}-s$. 
 
Let $R>0$. By Lemma \ref{crossing balls and functions} the inclusion 
\begin{equation}
\label{crossing balls with closure}
B(x,r+\epsilon)\subset \overline{B(y,r+s) \cup B(z,R)}
\end{equation}
is valid  for all  $\epsilon>0$ small enough if and only if the equation \eqref{cond 4} is valid with $\ell_x=r+\epsilon$, $\ell_y=r+s$ and $\ell_z=R$. Using the Blagovestchenskii identity the local source-to-solution data \eqref{Response Data} determines  \eqref{cond 4}.
We will show that
$$
d_g(p,z)=R^\ast:=\inf\{R>0: \textrm{Formula }\eqref{crossing balls with closure} \textrm{ is valid for } R \hbox{ and some } \epsilon>0\}.
$$

Suppose that \eqref{crossing balls with closure} is valid. Since we assumed that $r+s<\tau(y,\xi)$, it holds that $p \in \overline{B(z,R)}$. Thus $d_g(p,z)\leq R^\ast$. 

Suppose that $R \in (d_g(p,z), R^\ast)$. Then for any $\epsilon>0$ \eqref{crossing balls with closure} is not valid. Choose for every $k\in \N$  a point  
$$
p_k \in B(x,r+1/k) \setminus \overline{B(y,r+s) \cup B(z,R)}.
$$
By compactness of $\overline{B(x,r+1)}$ we may assume that $p_k \rightarrow \widetilde p \in \p B(x,r)$ as $k\rightarrow \infty$. By similar argument as in the proof of Lemma \ref{Finding cut dist} we deduce that $\widetilde p=p$. Since $p \in B(z,R)$ we get a contradiction with the choice of sequence $(p_k)_{k=1}^\infty$. Therefore interval $(d_g(p,z),  R^\ast)=\emptyset$ and $R^\ast =d_g(p,z)$.
\end{proof}
Let $p \in N$ and $z \in \Src$. By Lemma \ref{minimizing geodesics of S fill N} it holds that there exists $y\in \Src$ and an unit vector $\xi\in S_yN$ such that $p=\gamma_{y,\xi}(\widetilde{r})$, for some $\widetilde{r}< \tau(y,\xi)$. By Propositions \ref{reconstruction of cut distance from data} and  \ref{finding the distance function} we have reconstructed $R_{\Src}(N).$ Therefore Theorem \ref{th_geom} is proved.

\subsection{Reconstruction of the Riemannian manifold from the distance functions}

\label{Reconstruction of the Riemannian manifold from the distance functions}
So far we have been able to find the following \textit{distance data}
\begin{equation}
\label{distance data}
(\Src, g|_{\Src}, R_\Src(N)),
\end{equation}
where $R_\Src(N)$ is defined by \eqref{distance functions}. In this section we will show, how one can reconstruct the topological, smooth and Riemannian structures from the  distance data \eqref{distance data}.  The rest of the paper is devoted to showing the following theorem:

\begin{theorem}
\label{th:distance functions define manifold}
Let $(N,g)$ be a complete smooth Riemannian manifold without a boundary. Let $U \subset N$ be open, bounded and have a smooth boundary. Suppose that the topological and smooth structure of $U$ are known, and  $g|_{U}$ is also known. Then 
$$
R(N):=\{d_g(\cdot,x)|_{\overline{U}}: x \in N\}\subset C(\overline{U})
$$
determines, topological, smooth and Riemannian structure of $N$ up to isometry.
\end{theorem}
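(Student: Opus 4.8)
The plan is to use the classical boundary-distance-function approach of Kurylev and Lassas, adapted to the situation where instead of distances from a boundary we have distances from the \emph{known open set} $\overline{U}$. First I would define the map $\mathcal R : N \to C(\overline U)$ by $\mathcal R(x) = d_g(\cdot,x)|_{\overline U}$, so that $R(N)$ is exactly its image. The first step is to show $\mathcal R$ is a topological embedding: it is continuous since $x \mapsto d_g(\cdot,x)$ is continuous into $C(\overline U)$; it is injective because if $d_g(\cdot,x) = d_g(\cdot,y)$ on $\overline U$ then, picking $y_0 \in U$ and a minimizing geodesic from $y_0$ to $x$, one shows the two points must agree (distance functions from a whole open set separate points on a complete manifold — this is where openness of $U$ is used, exactly as in Lemma \ref{minimizing geodesics of S fill N}); and it is proper, since $g$ is complete and the triangle inequality forces $\mathcal R(x_k)$ to leave every compact subset of $C(\overline U)$ as $x_k \to \infty$. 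Hence $\mathcal R$ is a homeomorphism onto $R(N)$ with the sup-norm topology, so $R(N)$, as an abstract topological space, recovers the topology of $N$.

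Next I would reconstruct the differentiable structure. The idea, standard in this circle of techniques, is to build coordinate charts near each point $\mathcal R(x) \in R(N)$ out of the functions $r \mapsto r(z)$ evaluated at finitely many base points $z_1,\dots,z_n \in U$: one shows that for $x$ with a minimizing, non-cut geodesic to each $z_j$ and with the vectors $\operatorname{grad} d_g(\cdot,z_j)(x)$ spanning $T_xN$, the map $y \mapsto (d_g(z_1,y),\dots,d_g(z_n,y))$ is a smooth coordinate chart near $x$ (semigeodesic-type coordinates). These charts can be \emph{detected intrinsically} from the metric space $R(N)$ together with the known data on $U$: whether a point of $R(N)$ is a "regular" point for a tuple $(z_1,\dots,z_n) \subset U$ is decidable from $R(N)$ alone, because we know $g|_U$ hence can compute $d_g(z_i,z_j)$ and all first and second variation quantities along geodesics that enter $U$. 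Declaring these tuple-of-distance maps to be smooth coordinates gives a smooth atlas on $R(N)$ whose transition maps are smooth, and by construction $\mathcal R$ becomes a diffeomorphism.

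Finally, transport the metric: on the chart given by $\Phi = (d_g(z_1,\cdot),\dots,d_g(z_n,\cdot))$ the pulled-back metric $g$ has components $g_{jk}$, and its inverse satisfies $g^{jk}(x) = \langle \operatorname{grad} d_g(z_j,\cdot), \operatorname{grad} d_g(z_k,\cdot)\rangle_g(x)$. Each $\operatorname{grad} d_g(z_j,\cdot)(x)$ is the unit tangent of the minimizing geodesic from $z_j$ to $x$, and one recovers $\langle \cdot,\cdot\rangle$ between two such unit vectors from the first-order behaviour of the distance functions $d_g(\cdot, x')$ as $x'$ ranges near $x$ — i.e. from the collection of functions in $R(N)$ — using the first variation formula, exactly as in Proposition \ref{finding the metric tensor}. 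Thus $g$ is determined in every chart, hence globally, and the reconstruction $(N,g) \leftarrow (\overline U, g|_U, R(N))$ is canonical, which gives the claimed uniqueness up to isometry: if two manifolds produce the same distance data via a metric isometry of the known pieces, the reconstructions are isometric.

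I expect the main obstacle to be the differentiable-structure step: one must verify that the "regular" points (those admitting a good distance-coordinate chart) are dense and cover $N$, that regularity and the smoothness of the transition maps are genuinely determined by $R(N)$ and the data on $U$ without circularly assuming the smooth structure of $N$, and that cut loci do not obstruct the construction. Handling points all of whose minimizing geodesics to $U$ are cut points, and gluing the local charts coherently, is the delicate part; the topological and metric-recovery steps are comparatively routine given the completeness and openness hypotheses.
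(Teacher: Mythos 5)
Your proposal is correct in its overall architecture and matches the paper's three-stage strategy (topological embedding via properness, then a smooth atlas built from distance functions, then recovery of the metric from the fact that gradients of distance functions are unit vectors), but the middle stage takes a genuinely different route. You build charts from $n$-tuples $y\mapsto(d_g(z_1,y),\dots,d_g(z_n,y))$ in the classical Kurylev--Lassas style, which forces you to locate ``regular'' tuples near every point and, as you note, to verify that regularity is decidable from the data without circularity; in particular, testing linear independence of the vectors $\nabla_g d_g(z_j,\cdot)|_{x}$ requires the inner product at the \emph{unknown} point $x$, which must itself be extracted from first-variation identities along metrically characterized geodesic segments --- doable, but this is exactly the delicate part you flag and do not discharge. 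The paper sidesteps this entirely by using a \emph{single} base point: for $z\in\overline U$ it sets $\Phi_z(r)=-r(z)\,\nabla_g r|_z$, so that $\Phi_z\circ R=\exp_z^{-1}$ on $R(\exp_z(I(z)))$; all differentiation happens at $z\in U$ where $g$ is known, the transition maps are just $\exp_z^{-1}\circ\exp_w$, and the existence of a suitable $z$ for every $x\in N$ is precisely Lemma \ref{minimizing geodesics of S fill N}. The metric step is then Proposition \ref{finding the metric tensor}: one recovers $\widetilde g(r_0)$ from an \emph{open} family of unit covectors $dE_y|_{r_0}$, $y\in U'$, rather than from $n$ pairwise inner products of chosen gradients; the two are equivalent in content. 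Two smaller remarks: your injectivity argument via ``a minimizing geodesic from $y_0\in U$'' needs either a differentiability point of $d_g(\cdot,x)$ in $U$ (to pin down the outgoing direction) or the paper's device of taking the closest point of $x$ on the smooth hypersurface $\p U$ and using orthogonality of the minimizer there --- this is where the smooth-boundary hypothesis is actually used; and the properness/closedness step is exactly as in the paper's Proposition \ref{Mapping R is homeo}.
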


We emphasize that in \cite{Katchalov2001, kurylev1997multidimensional}  similar results and methods of the proofs have been considered in the case of manifold with a boundary. 

Since $\overline{U}$ is compact, $C(\overline{U})$ is a Banach space when equipped with $L^\infty$-norm. We define the mapping 
$$
R:N \rightarrow C(\overline{U}), \: R(x)=r_x=d_g(x,\cdot)|_{\overline{U}}.
$$
Our aim is to construct such a Riemannian structure in $R(N)\subset C(\overline{U})$ that $R:N\rightarrow R(N)$ is a Riemannian isometry.
\begin{lemma}
Mapping $R$ is continuous and one-to-one.
\end{lemma}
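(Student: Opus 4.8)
The plan is to treat the two assertions separately, as each reduces to a basic property of the Riemannian distance function. For continuity, I would observe that $R(x) = d_g(x,\cdot)|_{\overline U}$ and use the triangle inequality: for any $x, y \in N$ and any $z \in \overline U$ we have $|d_g(x,z) - d_g(y,z)| \le d_g(x,y)$, so $\|R(x) - R(y)\|_{L^\infty(\overline U)} \le d_g(x,y)$. Since $(N,g)$ is a smooth Riemannian manifold, $d_g$ induces the manifold topology, hence $x \mapsto d_g(x,y)$ is continuous and in fact $R$ is $1$-Lipschitz with respect to $d_g$; in particular it is continuous from $N$ (with its manifold topology) into the Banach space $C(\overline U)$ equipped with the sup-norm.

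For injectivity, suppose $R(x) = R(y)$, i.e. $d_g(x,z) = d_g(y,z)$ for every $z \in \overline U$. In particular this holds for every $z \in U$. If $x \ne y$, I would derive a contradiction by choosing a suitable test point in $U$: since $U$ is open and nonempty, pick any $z_0 \in U$, take a minimizing geodesic from $z_0$ to $x$ (which exists by completeness of $(N,g)$, via Hopf--Rinow), and move a short distance $\delta > 0$ along it, staying inside $U$, to a point $z_\delta \in U$ with $d_g(z_\delta, x) = d_g(z_0,x) - \delta$. Then $d_g(z_\delta, y) = d_g(z_0, y) - \delta = d_g(z_0,x) - \delta$ as well, so both $z_0$ and $z_\delta$ realize the equality $d_g(\cdot,x) = d_g(\cdot,y)$ along a whole geodesic segment; iterating (or taking the segment maximal inside $U$) forces, upon differentiating $d_g(\cdot,x) - d_g(\cdot,y)$ along the geodesic, that the initial velocity of the minimizing geodesic to $x$ equals that to $y$ at $z_0$, whence $x = y$ by uniqueness of geodesics. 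Alternatively, and more cleanly: fix $z_0 \in U$ and look at the function $\rho(z) = d_g(z,x) - d_g(z,y) \equiv 0$ on $U$; near $z_0$, if $z_0 \notin \{x,y\}$ and $z_0$ is not a cut point of either, $d_g(\cdot,x)$ and $d_g(\cdot,y)$ are smooth with unit gradients, so $\nabla d_g(\cdot,x) = \nabla d_g(\cdot,y)$ at $z_0$, which again identifies the geodesics and gives $x = y$.

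The only genuinely delicate point is handling the possibility that the chosen base point in $U$ is a cut point of $x$ or of $y$, where the distance functions fail to be smooth; this is circumvented by using that $U$ is open, so one has freedom to pick $z_0$ avoiding the (measure-zero) cut loci of both $x$ and $y$, or simply by running the geodesic-segment argument, which needs no smoothness. I expect writing the injectivity argument carefully — in particular justifying that a minimizing geodesic segment can be taken to lie in $U$ and that equality of distances along it forces equality of the geodesics — to be the main obstacle; completeness (Hopf--Rinow) is used precisely to guarantee existence of such minimizing geodesics. Continuity is immediate from the $1$-Lipschitz estimate above.
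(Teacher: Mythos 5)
Your continuity argument is identical to the paper's: the triangle inequality gives $\|R(x)-R(y)\|_{L^\infty(\overline U)}\le d_g(x,y)$, so $R$ is $1$-Lipschitz. For injectivity, however, you take a genuinely different route. The paper first disposes of the case $x\in\overline U$ (where $r_y(x)=r_x(x)=0$ forces $x=y$), and for $x,y\notin\overline U$ it uses the nearest point $z\in\partial U$ to $x$ on the compact set $\overline U$: the minimizing geodesic from $z$ to $x$ meets the smooth hypersurface $\partial U$ orthogonally and points outward, and the same holds for $y$, so both geodesics share initial data at $z$ and equal lengths $r_x(z)=r_y(z)$, giving $x=y$. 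This is where the standing hypothesis that $U$ has smooth boundary enters. Your argument instead works at an interior point $z_0\in U$ and exploits that $d_g(\cdot,x)\equiv d_g(\cdot,y)$ on all of the open set $U$: either the equality persists to first order along a short geodesic segment toward $x$ inside $U$, so the broken path $z_0\to z_\delta\to y$ is minimizing and hence an unbroken geodesic with the same initial velocity as the one to $x$; or, picking $z_0$ off the (closed, measure-zero) cut loci of $x$ and $y$, one equates the gradients of the two distance functions. Both variants are sound (completeness via Hopf--Rinow supplies the minimizing geodesics, and one should set aside the trivial case $x\in U$ or $y\in U$ so that $z_0\ne x,y$ can be arranged), and your approach has the mild advantage of not using smoothness of $\partial U$ at all, only that $U$ is open and nonempty; the paper's nearest-point argument is shorter but leans on that boundary regularity. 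The concluding step is the same in both: identical initial velocities plus identical distances force $x=y$ by uniqueness of geodesics.
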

\begin{proof}
Let $x,y \in N$. Then by the triangle inequality
$$
\|R(x)-R(y)\|_{L^\infty(\overline{U})}=\sup_{z\in \overline{U}} |r_x(z)-r_y(z)| \leq d_g(x,y).
$$
Thus $R$ is continuous.

Suppose that $x,y\in N$ satisfy $r_x=r_y$. If $x \in \overline{U}$ then $r_y(x)=0$ and thus $x=y$. Therefore we can assume that $x,y \in N \setminus \overline{U}$. Since $\overline{U}$ is compact there exists a closest point $z\in \overline{U}$ to $x$. Then $z \in \p U$ and it is also a closest point of $\overline{U}$ to $y$. Since $\p U$ is smooth $n-1$ dimensional submanifold of $N$, the distance minimizing unit speed geodesic $\gamma$ from $z$ to $x$ is orthogonal to $\p U$. Since both $x$ and $y$ are points of the exterior of $U$, it  holds by the uniqueness of geodesics that 
$$
x=\gamma(r_x(z))=\gamma(r_y(z))=y.
$$
This completes the proof.
\end{proof}

Next we will recall two topological results that allow us to prove that mapping $R:N \rightarrow R(N)$ is a homeomorphism. 

\begin{definition}
Let $X$ be a topological space. We say that a sequence $(x_j)_{j=1}^{\infty}$ in $X$  escapes to infinity, if for every compact $K \subset X$, $x_j \in K$ for at most finitely many $j \in \N$. 
\end{definition}
For the proofs of the following two lemmas see for instance \cite{Willard}.
\begin{lemma}
Let $(X,d_X)$ and $(Y,d_Y)$ be metric spaces. Let $f:X\rightarrow Y$ be continuous. Then $f$ is proper if and only if for every sequence $(x_j)_{j=1}^\infty \subset X$ that escapes to infinity the image sequence $(f(x_j))_{j=1}^\infty \subset Y$ escapes to infinity.
\end{lemma}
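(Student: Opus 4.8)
The statement to prove is the final displayed lemma: if $(X,d_X)$ and $(Y,d_Y)$ are metric spaces and $f \colon X \to Y$ is continuous, then $f$ is proper (i.e.\ preimages of compact sets are compact) if and only if $f$ maps sequences escaping to infinity to sequences escaping to infinity. The plan is to prove the two implications separately, using the characterization of compactness in metric spaces via sequential compactness.

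For the forward direction, suppose $f$ is proper and let $(x_j)_{j=1}^\infty \subset X$ escape to infinity. I would argue by contraposition: if $(f(x_j))_{j=1}^\infty$ does not escape to infinity in $Y$, then there is a compact set $L \subset Y$ containing $f(x_j)$ for infinitely many $j$, say for $j$ in an infinite index set $J$. Then $\{x_j : j \in J\} \subset f^{-1}(L)$, which is compact by properness; hence the subsequence $(x_j)_{j \in J}$ lies in a single compact set, contradicting the assumption that $(x_j)$ escapes to infinity (which requires every compact set to contain only finitely many terms). This gives the forward implication.

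For the reverse direction, suppose $f$ sends escaping sequences to escaping sequences, and let $L \subset Y$ be compact; I want to show $K := f^{-1}(L)$ is compact. Since $X$ is a metric space it suffices to show $K$ is sequentially compact, so take any sequence $(x_j)_{j=1}^\infty \subset K$. I claim $(x_j)$ does not escape to infinity: if it did, then $(f(x_j))$ would escape to infinity in $Y$, but $(f(x_j)) \subset L$ with $L$ compact, so $L$ would contain infinitely many — indeed all — terms of $(f(x_j))$, contradicting escape to infinity. Hence $(x_j)$ fails to escape, meaning there is a compact set $C \subset X$ containing $x_j$ for infinitely many $j$; passing to that subsequence and using sequential compactness of $C$, we extract a further subsequence $x_{j_k} \to x \in C$. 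By continuity of $f$, $f(x_{j_k}) \to f(x)$, and since $f(x_{j_k}) \in L$ and $L$ is closed (compact subsets of metric spaces are closed), $f(x) \in L$, so $x \in K$. Thus every sequence in $K$ has a subsequence converging to a point of $K$, so $K$ is compact.

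The argument is essentially routine point-set topology; the only place requiring a little care is ensuring we genuinely use the metric (hence first-countable, hence sequentially-compact-equals-compact) hypothesis rather than working in a general topological space, and being careful that "escapes to infinity" is correctly negated as "some compact set contains infinitely many terms." I do not anticipate a real obstacle here — the lemma is a standard fact recalled for convenience, and both directions are short contrapositive arguments built on sequential compactness and continuity.
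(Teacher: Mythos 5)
Your argument is correct: both contrapositive directions are carried out properly, the negation of ``escapes to infinity'' is handled correctly, and the reverse direction legitimately uses sequential compactness (valid in metric spaces) together with closedness of the compact set $L$ to land back in $f^{-1}(L)$. The paper itself does not prove this lemma but only cites a standard reference (Willard), so your write-up simply supplies the routine proof that the authors omitted, and it does so without gaps.
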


%

\begin{lemma}
\label{proper maps are closed}
Let $(X,d_X)$ and $(Y,d_Y)$ be metric spaces. Let $f:X\rightarrow Y$ be one-to-one, continuous and proper. Then mapping the $f$ is closed.
\end{lemma}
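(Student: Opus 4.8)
The plan is to use the sequential characterization of closed sets that is available in metric spaces, together with the fact that properness controls the preimages of convergent sequences. Let $A \subset X$ be an arbitrary closed set; I must show that $f(A)$ is closed in $Y$. Since $Y$ is a metric space, a set is closed if and only if it is sequentially closed, so it suffices to take an arbitrary sequence $(y_j)_{j=1}^\infty \subset f(A)$ with $y_j \to y$ in $Y$ and prove that the limit satisfies $y \in f(A)$.

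First I would write $y_j = f(x_j)$ with $x_j \in A$, and form the set $K = \{y\} \cup \{y_j : j \in \N\}$. As the union of a convergent sequence with its limit, $K$ is a compact subset of $Y$. Since $f$ is proper, the preimage $f^{-1}(K)$ is a compact subset of $X$, and by construction $x_j \in f^{-1}(K)$ for every $j$. Because $X$ is a metric space, this compact set is sequentially compact, so the sequence $(x_j)$ admits a subsequence $(x_{j_k})$ converging to some point $x \in f^{-1}(K)$. Each $x_{j_k}$ lies in the closed set $A$, so the limit satisfies $x \in A$. Finally, continuity of $f$ gives $f(x) = \lim_{k \to \infty} f(x_{j_k}) = \lim_{k \to \infty} y_{j_k} = y$, whence $y = f(x) \in f(A)$. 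This shows $f(A)$ is closed, and since $A$ was arbitrary, $f$ is a closed map.

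I do not expect a genuine obstacle here: this is the standard argument that proper continuous maps into first countable spaces are closed, and every step is routine once the metric structure is used to pass between closedness/compactness and sequences. The only point worth flagging is that the injectivity hypothesis is not actually used for this conclusion; it is listed because the intended application chains this lemma with the two preceding ones to deduce that $R : N \to R(N)$ is a homeomorphism onto its image.

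If one instead prefers to argue directly from the escaping-to-infinity criterion stated in the previous lemma, the same conclusion follows with a small variant: a convergent sequence $(y_j)$ cannot escape to infinity (it is eventually inside the compact set $K$ above), so by that lemma the sequence $(x_j)$ cannot escape to infinity either; hence infinitely many $x_j$ lie in a single compact set, and the subsequence extraction and continuity argument of the previous paragraph then go through verbatim to produce $x \in A$ with $f(x) = y$.
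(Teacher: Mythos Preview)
Your proof is correct and is the standard sequential argument for this fact; the observation that injectivity is not actually needed is also accurate. The paper itself does not give a proof of this lemma at all---it simply cites a general topology textbook (Willard)---so there is nothing to compare beyond noting that your argument is exactly the kind of routine verification the citation is standing in for.
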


\begin{proposition}
\label{Mapping R is homeo}
Mapping $R:N \rightarrow R(N)$ is a homeomorphism. 
\end{proposition}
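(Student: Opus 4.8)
The plan is to use Lemma \ref{proper maps are closed}: since we have already shown that $R : N \to R(N)$ is continuous and one-to-one, it suffices to verify that $R$ is proper, i.e.\ (by the escape-to-infinity characterization in the preceding lemma) that whenever a sequence $(x_j)_{j=1}^\infty \subset N$ escapes to infinity, the image sequence $(R(x_j))_{j=1}^\infty$ escapes to infinity in $R(N)$. Equipped with the subspace topology from $C(\overline U)$, a continuous, one-to-one, proper map onto its image is a closed map, hence a homeomorphism onto its image.

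So let $(x_j)_{j=1}^\infty$ escape to infinity in $N$. Fix a point $z_0 \in \overline U$. Since $N$ is complete, closed metric balls $\overline{B_g(z_0, k)}$ are compact for each $k \in \N$, and $N = \bigcup_k \overline{B_g(z_0,k)}$; because $(x_j)$ escapes to infinity, for each $k$ only finitely many $x_j$ lie in $\overline{B_g(z_0,k)}$, which means $d_g(x_j, z_0) \to \infty$ as $j \to \infty$. Now $R(x_j)(z_0) = d_g(x_j, z_0)$, so $\norm{R(x_j)}_{L^\infty(\overline U)} \ge d_g(x_j, z_0) \to \infty$. In particular $(R(x_j))$ leaves every bounded subset of $C(\overline U)$, and every compact subset of $R(N)$ is bounded, so $(R(x_j))$ escapes to infinity in $R(N)$. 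This proves $R$ is proper, and the two cited lemmas then give that $R$ is closed and hence a homeomorphism onto $R(N)$.

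The only mild subtlety is the identification of the topology: $R(N)$ carries the subspace topology inherited from the Banach space $C(\overline U)$, and one should note that a subset of $R(N)$ is compact iff it is compact in $C(\overline U)$, so in particular it is norm-bounded; this is what lets us conclude that the unboundedness of $\norm{R(x_j)}_{L^\infty(\overline U)}$ forces escape to infinity in $R(N)$. I do not expect any real obstacle here — the content is essentially packaged in the completeness of $(N,g)$ (giving compactness of closed balls via Hopf--Rinow) together with the single coordinate evaluation $R(x)(z_0) = d_g(x,z_0)$, which transfers metric unboundedness in $N$ directly to norm unboundedness in $C(\overline U)$.
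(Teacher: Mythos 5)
Your proposal is correct and follows essentially the same route as the paper: properness is verified via the escape-to-infinity characterization by evaluating $R(x_j)$ at a fixed point of $\overline U$ (the paper bounds $d_\infty(R(x_0),R(x_j))\ge d_g(x_0,x_j)$, which is the same single-point-evaluation trick), and then Lemma \ref{proper maps are closed} gives closedness and hence the homeomorphism. Your explicit appeal to Hopf--Rinow to get compactness of closed balls is a welcome clarification of a step the paper leaves implicit.
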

\begin{proof}
If $N$ is bounded, then $N$ is compact, and the claim follows from basic topology. Suppose that $N$ is not bounded. Let $(x_j)_{j=1}^\infty \subset N$ be a sequence that escapes to infinity. Let $x_0 \in \overline{U}$. We define $X_j:=\overline{B(x_0,j)}$ for every $j\in \N$ and $Y_j=R(X_j)$. Then $\cup_{j=1}^\infty X_j=N$ and thus 
$$
\lim_{j\rightarrow \infty }d_g(x_0,x_j)=\infty.
$$ 
We write $R(x_0)=:r_0$ and $R(x_j)=:r_j$. Then 
$$
d_\infty(r_{0},r_j)\geq |d_g(x_0,x_0)-d_g(x_0,x_j)|=d_g(x_0,x_j).
$$
Thus $d_\infty(r_{0},r_j) \longrightarrow \infty $ as $j\longrightarrow \infty$. Since a compact set of a metric space is always bounded, it holds that sequence $(r_j)_{j=1}^\infty$ escapes to infinity. Therefore $R$ is a proper mapping and by Lemma \ref{proper maps are closed} it is closed.
\end{proof}
By Proposition \ref{Mapping R is homeo}, the topological structure of $N$ has been found. Next we will show, how to construct such a smooth atlases on $N$ and $R(N)$ that the mapping $R$ is a diffeomorphism.

Let $z\in \overline{U}$ and $x\in N$. Denote by $\omega(x)$ the cut locus of $x$.  Recall that $r_x:=d_g(x,\cdot)|_{U}$ is smooth at $z$ if and only if $z\neq x $ or $z\notin \omega(x)$ (see Lemma 2.1.11 and Theorem  2.1.14 of \cite{klingenberg}). Using also the fact that $z\in \omega(x)$ if and only if $x\in \omega(z)$ we can find the cut locus $\omega(z)$ from data \eqref{distance data}. We write 
$$
I(z)\subset T_zN,
$$ 
for the largest, open star like subset of $T_zN$ such that the exponential mapping $\exp_z:T_zN\to N$   
restricted to $I(z)$ is a diffeomorphism onto an open set 
$$
\exp_z(I(z))=N \setminus \omega(z).
$$

We define a mapping $\Phi_{z}$ by 
$$
\Phi_z(r) := -r(z) \nabla_g r|_z \in I(z), \: r\in R(\exp_z(I(z))).
$$ 
By the following lemma it holds 
\begin{equation}
\label{eq: coordinate mapping}
\Phi_z \circ R|_{ R(\exp_z(I(z)))} = \exp_z^{-1},
\end{equation}
\begin{lemma}
\label{lem:lemma}
Let $x\in N$. Then the following are equivalent: 
\begin{eqnarray}
&& \eta \in I(z) \label{x in geodesic ball} \hbox{ and } \exp_z(\eta)=x
\\
&&\nabla_g d_g(x,\cdot)|_{z}\in T_{z}N \hbox{ exists and } \eta=-d_g(x,z)\nabla_g d_g(x,\cdot)|_{z}. \label{gradient test for points in geodesic ball}
\end{eqnarray}
\end{lemma}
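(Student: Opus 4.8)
The plan is to reduce the equivalence to a single geometric object --- the unique unit-speed minimizing geodesic from $z$ to $x$ --- and to express $\eta$ through it in two ways. First I would dispose of the trivial case $x=z$: then \eqref{x in geodesic ball} forces $\eta=0$, and this is matched to \eqref{gradient test for points in geodesic ball} by the convention that the vanishing factor $d_g(x,z)=0$ annihilates the (undefined) gradient, the same convention already implicit in \eqref{eq: coordinate mapping}. So assume $x\neq z$. Two standard ingredients will carry the argument. (a) By the definition of $I(z)$ one has $x\in\exp_z(I(z))=N\setminus\omega(z)$ iff $x\notin\omega(z)$, and in that case the preimage $\eta\in I(z)$ with $\exp_z(\eta)=x$ is unique, $\|\eta\|_g=d_g(x,z)$, and $t\mapsto\exp_z\bigl(t\,\eta/\|\eta\|_g\bigr)$ is \emph{the} unit-speed minimizing geodesic from $z$ to $x$ (radial geodesics in $I(z)$ are minimizing, and minimizing geodesics to non-cut points are unique). (b) By the fact recalled above from \cite{klingenberg}, $r_x:=d_g(x,\cdot)$ is smooth near $z$ exactly when $z\neq x$ and $z\notin\omega(x)$; there it satisfies the eikonal equation $\|\nabla_g r_x\|_g\equiv1$; and, using $z\in\omega(x)\Leftrightarrow x\in\omega(z)$, ``$r_x$ is differentiable at $z$'' is equivalent to ``$x\notin\omega(z)$''.

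For \eqref{x in geodesic ball}$\Rightarrow$\eqref{gradient test for points in geodesic ball} I would argue as follows. From \eqref{x in geodesic ball} and (a) we get $x\notin\omega(z)$, so by (b) the gradient $\nabla_g r_x|_z$ exists with $g$-length one, and the minimizing geodesic $\gamma$ from $z$ to $x$ is unique with $\eta=d_g(x,z)\,\dot\gamma(0)$. Differentiating $r_x(\gamma(t))=d_g(x,z)-t$ at $t=0$ gives $\langle\nabla_g r_x|_z,\dot\gamma(0)\rangle_g=-1$; since both vectors are unit, the equality case of Cauchy--Schwarz forces $\dot\gamma(0)=-\nabla_g r_x|_z$, whence $\eta=d_g(x,z)\dot\gamma(0)=-d_g(x,z)\nabla_g r_x|_z$. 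Conversely, if \eqref{gradient test for points in geodesic ball} holds then $r_x$ is differentiable at $z$, so by (b) $x\notin\omega(z)$ and there is a unique unit-speed minimizing geodesic $\gamma$ from $z$ to $x$; the same computation gives $\dot\gamma(0)=-\nabla_g r_x|_z$, so the vector $\eta=-d_g(x,z)\nabla_g r_x|_z$ equals $d_g(x,z)\dot\gamma(0)$ and therefore $\exp_z(\eta)=\gamma(d_g(x,z))=x$.

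The one step that needs more than a routine computation is upgrading this last conclusion ``$\exp_z(\eta)=x$'' to ``$\eta\in I(z)$'', i.e.\ ruling out that $\eta$ falls outside the star-shaped set on which $\exp_z$ is injective. For this I would invoke (a) once more: since $x\notin\omega(z)$ there \emph{is} some $\eta'\in I(z)$ with $\exp_z(\eta')=x$, and its radial geodesic is a unit-speed minimizing geodesic from $z$ to $x$; by uniqueness of such geodesics to the non-cut point $x$ it must coincide with $\gamma$, so $\eta'=d_g(x,z)\dot\gamma(0)=\eta$ and hence $\eta\in I(z)$. Thus the only genuinely delicate points are the sign and normalization of $\nabla_g r_x$ (handled by the Cauchy--Schwarz equality case together with the eikonal equation) and this identification of $\eta$ with the in-domain preimage; everything else is standard Riemannian geometry, and the lemma then immediately yields \eqref{eq: coordinate mapping}.
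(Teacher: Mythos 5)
Your proof is correct and follows essentially the same route as the paper's: both directions are reduced to the unique unit-speed minimizing geodesic from $z$ to $x$, using the symmetry of the cut locus and the standard differentiability properties of the distance function, with $\dot\gamma(0)=-\nabla_g d_g(x,\cdot)|_z$. You simply supply more detail than the paper does (the eikonal/Cauchy--Schwarz argument fixing the sign, the explicit identification of $\eta$ with the in-domain preimage $\eta'\in I(z)$ in the converse, and the degenerate case $x=z$), all of which is consistent with the paper's argument.
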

\begin{proof}
Suppose that formula \eqref{x in geodesic ball} is valid. Since exponential mapping $\exp_{z}|_{I(z)}$ is a diffeomorphism, the point $z$ is not in the cut locus of $x$ and therefore the function $d_g(x,\cdot)$ is smooth at $z$. Thus $\nabla_g d_g(x,\cdot)|_{z}\in T_{z}N$ exists and $\eta=-d_g(x,z)\nabla_g d_g(x,\cdot)|_{z}$. Therefore \eqref{gradient test for points in geodesic ball} is also valid.

Suppose that formula \eqref{gradient test for points in geodesic ball} is valid. Then it holds that $d_g(x,\cdot)$ is smooth at $z$. Thus $x$ is not in the cut locus of $z$ and therefore $\xi:=-\nabla_g d_g(x,\cdot)|_{z}$ is the initial velocity of the unique distance minimizing geodesic from $z$ to $x$. We have
$$
\exp_{z}(\eta)=\gamma_{z, \xi}( d_g(x,z))=x\in \exp_{z}(I(z)).
$$
\end{proof}
We define the smooth structure on $R(N)$ by using mappings $\Phi_z, z \in \overline{U}$. By Lemma \ref{minimizing geodesics of S fill N}  we have $\cup_{z\in \overline{U}}\hbox{dom}(\Phi_z)=N,$ and by \eqref{eq: coordinate mapping} each mapping $\Phi_z$ is a topological coordinate mapping. Let $z, w \in \overline{U}$. Then the composition 
$$
\Phi_{z} \circ \Phi_{w}^{-1}=(\Phi_z\circ R)\circ (\Phi_w\circ R)^{-1}=\exp_z^{-1}\circ \exp_w
$$ 
is well defined and smooth in the set 
$$
I(w)\bigcap (\exp_w^{-1}\circ\exp_z)(I(z)) \subset T_wN.
$$ 
Moreover, $R$ is clearly smooth when the smooth structure of $R(N)$ is defined in this way. Therefore we have proved the following proposition.
\begin{proposition}
\label{Mapping R is diffeo}
The mapping $R:N \rightarrow R(N)$ is a diffeomorphism.
\end{proposition}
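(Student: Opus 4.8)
The plan is to take the smooth structure on $R(N)$ to be the one generated by the charts $\Phi_z$, $z\in\overline U$, and then to show that in these charts $R$ is locally the identity map, hence a local diffeomorphism; since $R$ is already a homeomorphism onto $R(N)$ by Proposition \ref{Mapping R is homeo}, it will follow that $R$ is a diffeomorphism.

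First I would verify that $\{(R(\exp_z(I(z))),\Phi_z):z\in\overline U\}$ is a smooth atlas on $R(N)$. Because $R:N\to R(N)$ is a homeomorphism, $R(N)$ is second countable and Hausdorff, and each $R(\exp_z(I(z)))$ is open; these sets cover $R(N)$ since $\bigcup_{z\in\overline U}\exp_z(I(z))=N$ by Lemma \ref{minimizing geodesics of S fill N}. By Lemma \ref{lem:lemma} we have $\Phi_z\circ R|_{R(\exp_z(I(z)))}=\exp_z^{-1}$, and since $R$ is a homeomorphism and $\exp_z^{-1}:N\setminus\omega(z)\to I(z)$ is a diffeomorphism onto the open set $I(z)\subset T_zN$, each $\Phi_z$ is a homeomorphism onto $I(z)$. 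The transition maps
$$
\Phi_z\circ\Phi_w^{-1}=\exp_z^{-1}\circ\exp_w
$$
are smooth on the open set $I(w)\cap(\exp_w^{-1}\circ\exp_z)(I(z))\subset T_wN$, as already noted, so the atlas is smooth, and I would endow $R(N)$ with the smooth structure it generates.

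Then, to see that $R$ is a diffeomorphism, I would fix $z\in\overline U$ and compute the coordinate representation of $R$ in the chart $\exp_z^{-1}$ on $\exp_z(I(z))\subset N$ and the chart $\Phi_z$ on $R(\exp_z(I(z)))\subset R(N)$: it is
$$
\Phi_z\circ R\circ\exp_z=\exp_z^{-1}\circ\exp_z=\mathrm{id}_{I(z)},
$$
which is smooth with smooth inverse. Since these charts cover $N$ and $R(N)$, $R$ is a local diffeomorphism, and since $R$ is a bijection onto $R(N)$ (it is one-to-one, as shown above), $R$ is a diffeomorphism. I do not anticipate any genuine obstacle in this argument: the substantive facts — the intrinsic description of $\exp_z^{-1}$ via the distance functions in Lemma \ref{lem:lemma}, and the covering statement of Lemma \ref{minimizing geodesics of S fill N} — are already available, so only the routine checks that the $\Phi_z$ form a smooth atlas and that $R$ is locally the identity remain.
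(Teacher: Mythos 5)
Your proposal is correct and follows essentially the same route as the paper: both endow $R(N)$ with the atlas $\{\Phi_z\}_{z\in\overline U}$, use the identity $\Phi_z\circ R=\exp_z^{-1}$ from Lemma \ref{lem:lemma} together with the covering from Lemma \ref{minimizing geodesics of S fill N}, check that the transition maps $\exp_z^{-1}\circ\exp_w$ are smooth, and conclude that $R$ is locally the identity in these charts. Your write-up merely spells out the routine atlas verifications in more detail than the paper does.
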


We define a metric tensor $\widetilde g:=(R^{-1})^\ast g$ on $R(N)$, that is, $\widetilde{g}$ is the push forward of $g$. Then $(R(N),\widetilde g)$ and $(N,g)$ are Riemannian isometric. In the next proposition, we provide a method to construct  representation of $\widetilde g$ in local coordinates of $R(N)$.
 
\begin{proposition}
\label{finding the metric tensor}
Let $\widetilde{g}:=(R^{-1})^\ast g$. We can construct the metric tensor $\widetilde{g}$ on $R(N)$ from the distance data \eqref{distance data}.
\end{proposition}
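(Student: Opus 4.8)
The plan is to reconstruct $\widetilde g$ pointwise in the smooth atlas $\{(\operatorname{dom}\Phi_z,\Phi_z)\}_{z\in\overline U}$ of $R(N)$ that we have just built, using only the distance functions contained in the distance data \eqref{distance data}. First I would record the elementary identity that makes this meaningful: since $\Phi_z\circ R=\exp_z^{-1}$ on $I(z)$ we have $\Phi_z^{-1}=R\circ\exp_z$, whence
\[(\Phi_z^{-1})^\ast\widetilde g=(R\circ\exp_z)^\ast(R^{-1})^\ast g=\exp_z^\ast g,\]
that is, in the chart $\Phi_z$ the metric $\widetilde g$ is literally $g$ written in geodesic normal coordinates centred at $z$. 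Hence it suffices to determine, for every $\rho\in R(N)$, the cometric $\widetilde g^\ast_\rho$ on $T^\ast_\rho R(N)$ in one computable chart around $\rho$; inverting it returns $\widetilde g_\rho$, and by Lemma \ref{minimizing geodesics of S fill N} every $\rho\in R(N)$ lies in some $\operatorname{dom}\Phi_w$, with transition maps $\Phi_z\circ\Phi_w^{-1}=\exp_z^{-1}\circ\exp_w$ that are themselves determined by the data.

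The covectors I would use come from the evaluation functionals $e_z\colon R(N)\to\R$, $e_z(\rho)=\rho(z)$, for $z\in\overline U$. These are read directly off $R(N)$, and since $r_x(z)=d_g(x,z)$ we have $e_z\circ R=d_g(\cdot,z)$ on $N$; by the differentiability properties of distance functions \cite{klingenberg} already invoked above, $e_z$ is smooth at $r_x$ precisely when $z\notin\omega(x)\cup\{x\}$, a condition testable from \eqref{distance data} since $\omega(x)$ has been recovered. When $e_z$ is smooth at $r_x$ and $r_x\in\operatorname{dom}\Phi_w$, its differential $de_z|_{r_x}$ is computable in the chart $\Phi_w$, because $\Phi_w$, and hence $\Phi_w^{-1}$, is an explicit function of the data, so $e_z\circ\Phi_w^{-1}$ is a concrete smooth function to be differentiated. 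The key point is then to identify these covectors: since $\widetilde g=(R^{-1})^\ast g$ by definition and $e_z\circ R=d_g(\cdot,z)$, the eikonal equation $\|\nabla_g d_g(\cdot,z)\|_g\equiv 1$ transports to
\[\widetilde g^\ast_{r_x}\!\bigl(de_z|_{r_x},de_z|_{r_x}\bigr)=g^\ast_x\!\bigl(d\,d_g(\cdot,z)|_x,\,d\,d_g(\cdot,z)|_x\bigr)=1,\]
so every computed covector $de_z|_{r_x}$, $z\in U\setminus(\omega(x)\cup\{x\})$, lies on the unit cosphere $\Sigma_x=\{\xi:\widetilde g^\ast_{r_x}(\xi,\xi)=1\}$ of the still unknown cometric.

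The crucial geometric step — and the one I expect to require the most care — is to show that these covectors sweep out a nonempty relatively open subset of $\Sigma_x$ and that such a patch pins down $\widetilde g^\ast_{r_x}$ uniquely. For the first part, under $R^\ast$ the covector $de_z|_{r_x}$ corresponds to $d\,d_g(\cdot,z)|_x$, whose $g_x$-dual is the unit vector $-\exp_x^{-1}(z)/\|\exp_x^{-1}(z)\|_g$, the initial direction at $x$ of the minimizing geodesic reaching $z$; as $z$ runs over the open set $U\setminus(\omega(x)\cup\{x\})$ — nonempty because $U$ is open and $\omega(x)$ has measure zero — the vectors $\exp_x^{-1}(z)$ fill an open subset of $I(x)\subset T_xN$ (as $\exp_x^{-1}$ is a diffeomorphism off the cut locus), hence the normalised vectors fill an open subset of $S_xN$, and transporting by the isometry $R$ gives an open subset of $\Sigma_x$. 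For the second part, if two positive-definite quadratic forms agree on a nonempty relatively open subset of one of their unit spheres, their difference is a quadratic polynomial vanishing on an open piece of the connected real-analytic quadric $\{q=1\}$, hence on all of it, hence a scalar multiple of $(q-1)$; comparing constant terms forces the forms to coincide. Thus $\widetilde g^\ast_{r_x}$ is the unique positive-definite cometric whose unit cosphere contains the computed open patch of covectors, and I would finish by reading it off, inverting it to obtain $\widetilde g_{r_x}$, and varying $\Phi_w$ through the known transition maps to get $\widetilde g$ globally on $R(N)$. Together with Propositions \ref{Mapping R is homeo} and \ref{Mapping R is diffeo} this completes the proof of Theorem \ref{th:distance functions define manifold}, and hence of Theorem \ref{th:main_geom}.
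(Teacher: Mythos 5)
Your proposal is correct and follows essentially the same route as the paper: both use the differentials of the evaluation functionals $E_y(r)=r(y)$, the eikonal equation to place them on the unit cosphere at $r_0$, openness of the resulting patch via $\exp_{x_0}^{-1}$ being a diffeomorphism off the cut locus, and then recovery of the cometric from that patch. The only (cosmetic) difference is the last step: the paper reads off $\widetilde g^{ij}$ constructively as second derivatives of the homogeneous extension $F(sv)=s^2$ on the open cone over the patch, whereas you argue uniqueness of a positive-definite form with a prescribed open piece of unit sphere.
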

\begin{proof} Let $r_0\in R(N)$. We write $x_0:=R^{-1}(r_0)$. By Lemma \ref{minimizing geodesics of S fill N} it holds that there exists a point $z\in U$ that is not in the cut locus of $x_0$. Let $U'\subset U$ be an open neighborhood of $z$ such that $d_g(\cdot,y)$ is smooth at $x_0$ for any $y\in U'$. 

It holds that
$$
\nabla_g d_g(\cdot,y)|_{x_0}=-\dot{\gamma}_{y,x_0}(d_g(y,x_0))\in S_{x_0}N,
$$
where $\gamma_{y,x_0}$ is the unique unit speed distance minimizing geodesic from $y$ to $x_0$ 
Since $U'$ is open and $\exp_{x_0}$ is continuous the set $\exp_{x_0}^{-1}U'\subset T_{x_0}N$ is open. Therefore the set
$$
\mathcal V:=\{\nabla_g d_g(\cdot,y)|_{x_0}\in S_{x_0}N: y\in U'\}
$$ 
is open in $S_{x_0}N$. Let $(x,\xi)\in TN$. We will use the notation
$$
\xi^\flat:=\langle \xi,\cdot\rangle_g\in T^\ast_xN.
$$

Since $R$ is a diffeomorphism it holds that
$$
\mathcal{W}^\ast:=R_{\ast}\mathcal{V}^{\ast}=\{(\nabla d_g(R^{-1}(\cdot),y)|_{r_0})^\flat\in S^{\ast}_{r_0}R(N): y\in U'\} 
$$
is open. For any point $y\in U'$ we define an evaluation function $E_y: R(N) \rightarrow \R$ with the formula $E_y(r)=r(y).$ Notice that
$$
d E_y|_{r_0}=(\nabla d_g(R^{-1}(\cdot),y)|_{r_0})^\flat,
$$
and therefore
$$
\mathcal{W}^\ast=\{d E_y|_{r_0}\in S^\ast_{r_0}R(N): y\in U'\}.
$$

As we know the smooth structure of $R(N)$ we can find the set $\mathcal{W}^\ast$. The last step is to show that set $\mathcal{W}^\ast$ determines $\widetilde g(r_0)$.

Let 
$$
\R_+\mathcal{W}^\ast:=\{sv \in T^\ast_{r_0}R(N): v\in \mathcal{W}^{\ast},s>0\}
$$
be the open cone generated by $\mathcal{W}^\ast$. Let $\{E_j\}_{j=1}^{n}$ be a local coordinate system at $r_0$. For any $s>0$ and $v\in \mathcal{W}^\ast$ it holds in coordinates $\{E_j\}_{j=1}^{n}$ that
$$
F(sv):=s^2\widetilde g^{ij}(r_0)v_{i}v_j=s^2.
$$
We know the function $F:\R_+\mathcal{W}^\ast\rightarrow \R$, and $\R_+\mathcal{W}^\ast$ is open, we get
$$
\widetilde g^{ij}(r_0)=\frac{\partial}{\partial E_i}\frac{\partial}{\partial E_j}F.
$$
\end{proof}
By Propositions \ref{Mapping R is homeo}, \ref{Mapping R is diffeo} and \ref{finding the metric tensor} we can reconstruct $(R(N),\widetilde g)$, more over $(N,g)$ and $(R(N),\widetilde g)$ are isometric as Riemannian manifolds. Thus  we have proved Theorem \ref{th:distance functions define manifold}. 


In order to prove Theorem \ref{th:main_geom} we still need the next small lemma.

\begin{lemma}
\label{distance from responce mapping}
Let $(N,g)$ and $\Src$ be as in the formulation of Theorem \ref{th:main_geom}. Then data $(\Src, \Lambda_\Src) $ determines the distance function $d_g$ on $\Src \times \Src$.
\end{lemma}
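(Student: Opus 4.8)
The plan is to rerun the argument behind Lemma~\ref{crossing balls and functions} and Proposition~\ref{finding the distance function}, but starting from the bare data $(\Src,\Lambda_\Src)$ and taking care that $g|_{\Src}$ is never used --- at this point in the proof of Theorem~\ref{th:main_geom} only the topological and smooth structure of $\Src$ is available, while $g|_{\Src}$ is recovered from $d_g|_{\Src\times\Src}$ only afterwards. First I would invoke the Blagovestchenskii identity (see \eqref{Blagovestchenskii identity} in the appendix): for each $T>0$ and all $f,h\in C^\infty_0((0,\infty)\times\Src)$ it expresses $\langle w^f(T,\cdot),w^h(T,\cdot)\rangle_{L^2(N)}$ through $f$, $h$, $\Lambda_\Src f$ and $\Lambda_\Src h$. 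Hence $(\Src,\Lambda_\Src)$ determines the full Gram matrix of the family $\{w^f(T,\cdot):T>0,\ f\in C^\infty_0((0,\infty)\times\Src)\}$ in $L^2(N)$, and therefore also, for any two subfamilies of it, whether the closed linear span of one is contained in that of the other, this being a least-squares condition on the Gram matrix.

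Next I would identify the relevant closed spans. For open $V\subset\Src$ and $r>0$, finite speed of propagation gives $\supp w^f(T,\cdot)\subset\{x\in N:d_g(x,V)\le r\}$ for every $f\in C^\infty_0((T-r,T)\times V)$, while approximate controllability (Theorem~\ref{density}), applied exactly as in the proof of Lemma~\ref{lem:finding the distance function}, shows that such $w^f(T,\cdot)$ are dense in $L^2(\{x:d_g(x,V)<r\})$. Since metric spheres are null, the closed span of $\{w^f(T,\cdot):f\in C^\infty_0((T-r,T)\times V)\}$ equals $L^2(\{x:d_g(x,V)\le r\})$ up to a null set. Consequently, for open $V_1,V_2\subset\Src$, numbers $r,\delta>0$ and any $T>\max(r,\delta)$, the data $(\Src,\Lambda_\Src)$ detects --- as a subspace inclusion tested on the Gram matrix --- whether
\begin{equation}
\label{eq:nbhd incl plan}
\{x\in N:d_g(x,V_2)\le\delta\}\subseteq\{x\in N:d_g(x,V_1)\le r\}
\end{equation}
holds modulo null sets.

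Finally I would extract the distance. Fix $x_1,x_2\in\Src$. For a fixed open neighborhood $V_1\ni x_1$ in $\Src$, elementary properties of $d_g$ show that \eqref{eq:nbhd incl plan} holds for some open neighborhood $V_2\ni x_2$ in $\Src$ with compact closure and some $\delta>0$ if and only if $d_g(x_2,V_1)\le r$; hence the infimum of such $r$ equals $d_g(x_2,V_1)=\inf_{v\in V_1}d_g(x_2,v)$. Taking the supremum over all open neighborhoods $V_1\ni x_1$ in $\Src$ and using continuity of $v\mapsto d_g(x_2,v)$ gives $d_g(x_1,x_2)$. All of this uses only $(\Src,\Lambda_\Src)$ and the topology of $\Src$, so $d_g|_{\Src\times\Src}$ is determined.

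I expect the main obstacle to be the clean identification of the closed span of $\{w^f(T,\cdot)\}$ with $L^2$ of the metric neighborhood: the inclusion ``$\subseteq$'' is just finite speed of propagation, but ``$\supseteq$'' is the Tataru-type unique continuation packaged in Theorem~\ref{density}, and one must keep track of the measure-zero boundaries, of the asymmetry between $<r$ and $\le r$ in the domains of influence, and of the restriction to topological rather than metric neighborhoods of $x_1,x_2$ --- the last point being the reason the argument cannot simply quote Lemma~\ref{crossing balls and functions}. After those points are settled, the remaining bookkeeping is routine.
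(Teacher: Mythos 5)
Your overall route (Blagovestchenskii identity, then finite speed of propagation plus approximate controllability to identify the reachable sets, then shrinking neighbourhoods to read off $d_g(x_1,x_2)$) is viable in principle and is in fact the strategy the paper uses later for Propositions \ref{reconstruction of cut distance from data} and \ref{finding the distance function}. But there is a concrete gap in your very first step, and it is exactly the pitfall you announced you would avoid. The right-hand side of the Blagovestchenskii identity \eqref{Blagovestchenskii identity} is the pairing $\langle f,(J\Lambda_\Src-\Lambda_\Src^\ast J)h\rangle_{L^2((0,T)\times N)}$, which, since $f$ is supported in $\R\times\Src$, is an integral over $(0,T)\times\Src$ against $dt\otimes dV_g$. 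The operator $\Lambda_\Src^\ast=R\Lambda_\Src R$ is indeed determined by the data, but the volume form $dV_g|_{\Src}$ is part of $g|_{\Src}$, which at this stage of the proof of Theorem \ref{th:main_geom} is \emph{not} known (the hypothesis only provides a diffeomorphism $\phi$, not an isometry, and $g|_{\Src}$ is recovered only after $d_g|_{\Src\times\Src}$ via Proposition \ref{finding the metric tensor}). So the claim that $(\Src,\Lambda_\Src)$ determines the full Gram matrix is unjustified: with an auxiliary volume form $\omega=\eta\,dV_g$ ($\eta>0$ unknown) you only compute $B(f,h)=\langle w^{\eta f}(T,\cdot),w^h(T,\cdot)\rangle_{L^2(N)}$, and since you cannot identify which test function equals $\eta^{-1}h$, you cannot extract $\|w^h(T,\cdot)\|_{L^2(N)}^2$, so the least-squares test for the span inclusion cannot be run. (This is precisely why the proof of Corollary \ref{co:spectral problem} has to carry the unknown weight $\eta$ around explicitly.)

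The gap is repairable, but it changes the shape of the argument. What \emph{is} invariant under the unknown reweighting is the vanishing pattern: since $f\mapsto\eta f$ is a bijection of $C_0^\infty(S_1)$ onto itself, ``$B(f,h)=0$ for all $f\in C_0^\infty(S_1)$, $h\in C_0^\infty(S_2)$'' is equivalent to $L^2(M(r,V_1))\perp L^2(M(\delta,V_2))$, i.e.\ (up to the null boundaries) to $d_g(V_1,V_2)\ge r+\delta$; taking $\sup\{r+\delta\}$ and shrinking $V_1\to x_1$, $V_2\to x_2$ then gives $d_g(x_1,x_2)$ without ever needing norms. You should replace your span-inclusion test \eqref{eq:nbhd incl plan} by this pure orthogonality test. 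For comparison, the paper's own proof of Lemma \ref{distance from responce mapping} avoids inner products altogether: it fixes an auxiliary metric $d_0$ on $\Src$ (only the smooth structure is needed), takes sources supported in $(0,\infty)\times B_{d_0}(x,\epsilon)$, and reads off the first time $t_\epsilon$ at which $\supp(\Lambda_\Src f)(t,\cdot)$ can meet $B_{d_0}(y,\epsilon)$; finite speed of propagation gives $t_\epsilon\ge\mathrm{dist}_g$ of the two balls and approximate controllability gives equality, after which $\epsilon\to0$. That first-arrival-time argument is both more elementary and manifestly independent of any volume form.
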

\begin{proof}
Let $x,y\in \Src$. Since $\Src$ is a smooth manifold, we may choose an auxiliary metric $d_0$ on $\Src$ that gives the same topology as $g$. Let $\epsilon >0$ and consider the metric ball $B_{d_0}(x,\epsilon).$
We write $\mathcal{B}_\epsilon:=(0,\infty)\times B_{d_0}(x,\epsilon)$ and
$$
t_\epsilon=\inf \{t >0: \hbox{there is } f\in C_0^\infty(\mathcal{B}_\epsilon) \hbox{ such that supp}(\Lambda_\Src f)(t,\cdot)\cap B_{d_0}(y,\epsilon)\neq \emptyset\}. 
$$
By the finite speed of wave propagation  and the  approximate controllability the equality
$$t_\epsilon = \hbox{dist}_g(B_{d_0}(x,\epsilon),B_{d_0}(y,\epsilon))$$
holds. Thus the following limit is valid
$$
d_g(x,y)=\lim_{\epsilon \rightarrow 0}t_\epsilon.
$$
\end{proof}
Now we are finally ready to give a proof for Theorem \ref{th:main_geom}.
\begin{proof}[Proof of Theorem \ref{th:main_geom}]

By making $\Src_i$ smaller, if needed, we may assume without loss of generality that $\Src_i$ is precompact with smooth boundary and that $\phi:\overline{\Src}_1\rightarrow \overline{\Src}_2$ (see \eqref{diffeo phi}) is a diffeomorphism. Denote $ R(N_i)=\{d_{i}(x, \cdot)|_{\overline{\Src}_i}:x\in N_i\}$ and consider a mapping 
$$
R_i:N_i\rightarrow R(N_i), \: i=1,2, \: R_i(x)=d_i(x,\cdot)|_{\overline{\Src_i}}.
$$
By Lemma \ref{distance from responce mapping}, Proposition \ref{finding the metric tensor} and equation \eqref{eq: phi pulls back Lambda} it holds that 
\begin{equation}
\label{eq:distance_&_metric_in_Src_coincide}
d_1(\cdot, \cdot)|_{\Src_1\times \Src_1}=d_2(\phi(\cdot), \phi(\cdot))|_{\Src_1\times \Src_1} \hbox{ and } g_1|_{\Src_1}=\phi^{\ast}g_2|_{\Src_2}.
\end{equation}
Therefore we may assume that $\phi:\overline{\Src}_1\rightarrow \overline{\Src}_2$ is a Riemannian isometry. By Proposition \ref{Finding cut dist} the following relation
$$
\tau_1(y,\xi)=\tau_2(\phi(y),\phi(\xi)), \: y\in \Src_1, \: \xi\in S_yN_1
$$
is valid. Therefore by Proposition \ref{finding the distance function} it holds that
$$
R_2(N_2)=\Phi(R_1(N_1)),
$$
where 
$$
\Phi:C(\overline{\Src}_1) \rightarrow C(\overline{\Src}_2), \:  \Phi(f)=f\circ \phi^{-1}.
$$
Moreover by Theorems \ref{th_geom} and \ref{th:distance functions define manifold} the mappings $R_i:N_i\rightarrow R_i(N_i)$ are Riemannian isometries. 
%

With out loss of generality we assume that $\overline{\Src_1}\subset V$, where $(V,\alpha)$ is a coordinate chart for $N_1$. Write $\alpha \circ \phi=:\widetilde \alpha$, $W=\alpha(V)$ and define Riemannian isometries
$$
\alpha^\ast:R(N_1)\rightarrow \alpha^\ast(R_1(N_1)) \subset  C(W), \: \alpha^\ast(r)(x)=r(\alpha^{-1}(x))
$$ 
and 
$$
\widetilde \alpha^\ast:R(N_2)\rightarrow \widetilde \alpha^\ast(R_2(N_2)) \subset C(W), \: \widetilde \alpha^\ast(r)(y)=r(\widetilde \alpha^{-1}(y)).
$$

Thus we have proved that mapping 
$$
N_1\stackrel{R_1}{\longrightarrow} R_1(N_1)\stackrel{\alpha^\ast}{\longrightarrow} \alpha^\ast(R_1(N_1))   \stackrel{id}{\longrightarrow} \widetilde \alpha^\ast(R_2(N_2))\stackrel{(\widetilde \alpha^\ast)^{-1}}{\longrightarrow} R_2(N_2)\stackrel{R_2^{-1}}{\longrightarrow} N_2,
$$
is a Riemannian isometry. This ends the proof.


\end{proof}

Lastly we will give a proof for Corollary \ref{co:spectral problem}.

\begin{proof}[Proof of Corollary \ref{co:spectral problem}]
Since $N$ is a compact manifold without a boundary we have
$$
0=\lambda_1 < \lambda_2 \leq \lambda_3 \ldots \: .
$$
Let $f \in C^\infty_0((0,\infty) \times \Src)$ and $w=w^f$ be the solution of the initial value problem \eqref{eq:problem_RM}. For each $j\in \N$ we define the $j^{th}$ Fourier coefficient 
$$
I_j(t)=\langle w(t,\cdot), \varphi_j \rangle_{L^2(N)}.
$$
Since $w$ is smooth, also $I_j$ is smooth. By Greens formula and the initial conditions of \eqref{eq:problem_RM} it holds that
\begin{equation}
\label{eq:spectral ODY}
\left\lbrace \begin{array}{l}
\frac{d^2}{dt^2}I_j(t)-\lambda_jI_j(t)=\int_\Src f(t,x)\varphi_j(x) dV_g(x)
\\
I_j(0)=\frac{d}{dt}I_j(0)=0.
\end{array}\right.
\end{equation}

Solve the ordinary differential equation \eqref{eq:spectral ODY} to get 
$$
 I_j(t)=\int_0^t\int_\Src s_j(t-s)f(s,x)\varphi_j(x) dV_g(x) ds, \: j\geq 1
$$
where
$$
s_1(t)=t \hbox{ and } s_j(t)=\frac{\sin(\sqrt{\lambda_j}(t))}{\sqrt{\lambda_j}}, \hbox{ for  } j>1.
$$
Notice that apriori the volume form $dV_g|_{\Src}$ is not given. However without a loss of generality we may assume that $\Src$ is contained in a coordinate patch of $N$. Thus we can assume, that we are given some volume form $\omega$ on $\Src$. Therefore there exists a unique smooth function $\eta:\Src\rightarrow (0,\infty)$ such that 
$$
\eta dV_g|_{\Src}= \omega.
$$
We write
$$
\widetilde{I}_j(t)=\int_0^t\int_\Src s_j(t-s)f(t,x)\varphi_j(x) \omega(x) ds.
$$
By direct computations and initial values of \eqref{eq:spectral ODY} we have
\begin{equation}
\label{eq:waave with source eta f}
\sum_{j=1}^\infty \widetilde I_j(t)\varphi_j(x)=w^{\eta f}(t,x).
\end{equation}

Thus for every $f\in C^\infty_0((0,\infty) \times \Src)$ the Fourier coefficients $\widetilde I_j(t)$ can be recovered from the Spectral data \eqref{Spectral data}. We conclude that we have recovered the mapping
$$
\sum_{j=1}^\infty \widetilde I_j(t)\varphi_j(x)|_{{x\in \Src}}=w^{\eta f}(t,x)|_{x\in \Src}=(\Lambda_\Src M_\eta)f.
$$
Here $M_\eta $ is the multiplier operator $M_\eta f(t,x)=\eta(x) f(t,x)$.  Let $R(h(t,x))=h(-t,x)$. Then
$$
(\Lambda_\Src M_\eta)^\ast=M_\eta^\ast\Lambda_\Src^\ast=M_\eta R\Lambda_\Src R=R(M_\eta \Lambda_\Src)R
$$
(see Lemma \ref{Adjoint of solution mapping} in the appendix), so that we have recovered the operator $M_\eta \Lambda_{\Src}$. Notice that the unknown weight $\eta$ can be found in the same way as the function $\kappa$ in the proof of Theorem \ref{th_response}. Therefore the claim follows from Theorem \ref{th:main_geom}.

\end{proof}

\noindent
\textbf{Acknowledgements.} The research of TH, ML, and TS was partly supported by  the Finnish Centre of Excellence in Inverse Problems Research
and Academy of Finland. In particular, ML was supported by projects 284715 and 303754, TH by project 275177 and TS by projects 273979 and 263235. LO was partly supported by EPSRC grant EP/L026473/1

The authors thank M. Santacesaria for the help in preparing the article.
\section{Appendix}

In the appendix we recall some well known results related to the propagation of waves on Riemannian manifold. We will use the assumptions and notations of Theorem \ref{th:main_geom}. Let $T>0$,
$p \in N$ and $a>1$. Let $C_{p,T}$ be the cone 
$$
C_{p,T}:=\{(t,q)\in \R\times N: 0\leq t \leq T, \: d_{N}(p,q)< T- t\}.
$$ 

\begin{theorem}[Finite speed of propagation]
\label{[Finite speed of propagation cone version} Let $f\in L^2( \R\times N)$. Suppose that $u$ solves 
$$
\left\lbrace \begin{array}{l}
(\p^2_t-\Delta_g)u=f, \: \textrm{ in } (0,\infty)\times N
\\
 f|_{C_{p,T}}=0 
\\
u|_{B(p,T)\times \{t=0\}}=\p_tu|_{B(p,T)\times \{t=0\}}=0, 
\end{array} \right.
$$
Then 
$$
u|_{C_{p,T}}=0. 
$$

\end{theorem}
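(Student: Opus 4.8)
The plan is to run the classical energy estimate on the backward light cone, adapted to the Riemannian setting (cf.\ \cite{Katchalov2001}). By a routine mollification argument — regularizing $u$ in the space-time variables and using Friedrichs' lemma to control the commutator with $\Box_g$, so that $u$ is approximated in the energy norm by smooth solutions of $\Box_g u_\varepsilon=f_\varepsilon$ with $f_\varepsilon\to f$ in $L^2$ — it suffices to prove the estimate when $u$ is smooth; so I would assume this from now on.

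For $s\in[0,T]$ set $e(u)=\tfrac12(|\partial_t u|^2+|\nabla_g u|^2)$ and
$$
E(s)=\int_{B(p,T-s)} e(u)(s,\cdot)\,dV_g .
$$
The energy--momentum identity
$$
\partial_t e(u)-\mathrm{div}_g\!\bigl(\partial_t u\,\nabla_g u\bigr)=\partial_t u\,(\partial_t^2 u-\Delta_g u)=\partial_t u\, f
$$
says that the space-time vector field $X=(e(u),-\partial_t u\,\nabla_g u)$ has divergence $\partial_t u\, f$. I would integrate this over $\Omega_s:=C_{p,T}\cap\{0\le t\le s\}$ and apply the divergence theorem. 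The boundary of $\Omega_s$ is the bottom disk $\overline{B(p,T)}\times\{0\}$, the top disk $\overline{B(p,T-s)}\times\{s\}$, and the lateral cone $\Sigma_s=\{(t,x):0\le t\le s,\ d_g(p,x)=T-t\}$. The flux of $X$ through the bottom disk vanishes since the Cauchy data of $u$ vanish on $B(p,T)$; the flux through the top disk is exactly $E(s)$; on $\Sigma_s$, whose outward conormal is proportional to $(1,\nabla_g d_g(p,\cdot))$ with $|\nabla_g d_g(p,\cdot)|_g=1$ almost everywhere, the flux density equals a positive multiple of $\tfrac12\bigl(\partial_t u-\langle\nabla_g u,\nabla_g d_g(p,\cdot)\rangle\bigr)^2+\tfrac12\bigl(|\nabla_g u|^2-\langle\nabla_g u,\nabla_g d_g(p,\cdot)\rangle^2\bigr)\ge 0$; and since $\Omega_s\subset C_{p,T}$, the hypothesis $f|_{C_{p,T}}=0$ makes the divergence integral vanish. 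Collecting terms gives $E(s)\le 0$, so $E(s)=0$ for every $s\in[0,T]$; hence $\partial_t u\equiv 0$ and $\nabla_g u\equiv 0$ on $C_{p,T}$. Since $C_{p,T}$ is connected and $u$ vanishes on $B(p,T)\times\{0\}\subset C_{p,T}$, we conclude $u\equiv 0$ on $C_{p,T}$.

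The one genuinely delicate point is making the space-time divergence theorem rigorous on $\Omega_s$, because $\Sigma_s$ is only a Lipschitz hypersurface: the metric sphere $\partial B(p,r)$ meets the cut locus of $p$, where $d_g(p,\cdot)$ fails to be smooth. I would circumvent this by replacing $d_g(p,\cdot)$ with a smooth majorant $\rho_\varepsilon\ge d_g(p,\cdot)$ — obtained by mollification in a locally finite family of charts — with $\rho_\varepsilon\to d_g(p,\cdot)$ locally uniformly and $|\nabla_g\rho_\varepsilon|_g\le 1+\varepsilon$, and running the argument on the genuinely smooth cones $C_\varepsilon:=\{(t,x):t\ge 0,\ \rho_\varepsilon(x)+(1+\varepsilon)t<T\}$. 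These satisfy $C_\varepsilon\subset C_{p,T}$, exhaust $C_{p,T}$ as $\varepsilon\to 0$, have bottom disks contained in $B(p,T)$, have smooth lateral boundaries (the defining function has $t$-derivative $1+\varepsilon\ne 0$), and their lateral flux density remains nonnegative by the arithmetic--geometric mean inequality $(1+\varepsilon)e(u)\ge(1+\varepsilon)|\partial_t u||\nabla_g u|\ge\partial_t u\,\langle\nabla_g u,\nabla_g\rho_\varepsilon\rangle$. Alternatively one may appeal to the coarea formula and the eikonal identity $|\nabla_g d_g(p,\cdot)|_g=1$ a.e., since only almost-everywhere information about $d_g(p,\cdot)$ enters the flux computation.
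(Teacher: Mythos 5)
Your proof is correct and is precisely the standard backward-light-cone energy estimate that the paper delegates to the reference \cite{Taylor}: the energy--momentum identity, the nonnegativity of the lateral flux via $e(u)\ge|\p_t u|\,|\nabla_g u|$, the vanishing of the source and initial-data contributions, and the reduction to smooth solutions by Friedrichs mollification are all handled properly. Your device for the one genuinely delicate point --- replacing $d_g(p,\cdot)$ by a smooth majorant $\rho_\varepsilon$ with $|\nabla_g\rho_\varepsilon|_g\le 1+\varepsilon$ and running the estimate on the smooth, slightly steeper cones $C_\varepsilon$ --- is a legitimate alternative to the more common route of first proving the estimate on strictly spacelike cones inside normal neighborhoods of $p$ (where the distance function is smooth) and then covering and iterating in time.
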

\begin{proof}
See \cite{Taylor}.
\end{proof}

Consider an open double cone created by a cylindrical set $(0,2T)\times \Src$ 
$$
C(T,\Src)=\lbrace(t,x)\in (0,2T)\times N: 
\hbox{dist}_g(x,\Src)< \min\{t,2T-t\}
\rbrace
$$ 
We write 
$$
M(T,\Src)=\{x \in N: \hbox{dist}_g(x,\Src)\leq T\},
$$
for the domain of influence of set $\Src$. 
\begin{theorem}[Tataru's unique continuation]
\label{Unique continuation}
Let $\Src\subset N$ be open and bounded. Let $u\in C^\infty_0(\R\times N)$. Suppose that $(\p^2_t-\Delta_g)u=0$ in $(0,2T)\times M(T,\Src)$ and $u|_{(0,2T)\times \Src}\equiv 0$. Then $u|_{C(T,\Src)}\equiv 0$. 
\end{theorem}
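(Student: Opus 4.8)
The plan is to recognise this as the sharp unique continuation theorem of Tataru for the wave operator, with the optimal influence domain of Robbiano--Zuily and H\"ormander; see also \cite{Katchalov2001} for an exposition in a related geometric setting. After the translation $t\mapsto t-T$, which fixes $P=\p_t^2-\Delta_g$ and turns $(0,2T)$ into $(-T,T)$, the statement becomes: if $Pu=0$ on $(-T,T)\times M(T,\Src)$ and $u\equiv 0$ on $(-T,T)\times\Src$, then $u\equiv 0$ on the diamond
\[
K=\{(t,x)\in(-T,T)\times N:\ |t|+\mathrm{dist}_g(x,\Src)<T\},
\]
which is the $T$-shift of $C(T,\Src)$. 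Only the equation on $(-T,T)\times M(T,\Src)$ enters, since $\overline K$ meets the complement of this set only in the negligible tip $\{|t|=T\}\times\overline{\Src}$; by the finite speed of propagation this is all that is needed to make the statement meaningful. Since $P$ is even in $t$ it suffices to treat $t\ge 0$.

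The engine is the local unique continuation principle of Tataru--Robbiano--Zuily--H\"ormander: because the coefficients of $P$ are independent of $t$, hence real-analytic in $t$, whenever $Pv=0$ near a point $q$ and $v$ vanishes on one side of an oriented $C^2$ hypersurface $\Sigma\ni q$ that is non-characteristic at $q$ (conormal $\nu=(\nu_t,\nu_x)$ with $\nu_t^2\neq|\nu_x|_g^2$, i.e.\ $\Sigma$ is spacelike or timelike at $q$), then $v\equiv 0$ in a neighbourhood of $q$. The decisive point is that partial analyticity in $t$ removes the pseudoconvexity hypothesis of the classical Carleman-estimate version; this is essential here because the lateral boundary of $K$ is foliated by \emph{timelike} level sets of $\mathrm{dist}_g(\cdot,\Src)$, across which unique continuation can fail for merely $C^\infty$ coefficients.

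To globalise I would argue by continuity along a sweeping family. A first application of the local principle across $\partial\Src\times\R$, crossing from the $\Src$-side where $u=0$, gives $u\equiv 0$ on a set $W_0=\{(t,x):\mathrm{dist}_g(x,\Src)<\delta_0(|t|)\}$ with $\delta_0>0$ continuous on $[0,T)$ and $\delta_0(s)\to 0$ as $s\to T$ — already cone-shaped, but with a non-optimal opening that the theorem must straighten up to the profile $s\mapsto T-s$. One then deforms $W_0$ to $K$ through a family $\{W_\theta\}_{\theta\in[0,1]}$ of cone-shaped open sets with $\overline{W_\theta}\subset K$, $W_\theta\subset W_{\theta'}$ for $\theta<\theta'$, and each collar $\overline{W_{\theta'}}\setminus W_\theta$ compact: along the lateral part one crosses slightly interior timelike level sets of $\mathrm{dist}_g(\cdot,\Src)$, along the caps the flat spacelike slices $\{t=\mathrm{const}\}$, and along the edges suitably tilted non-characteristic hypersurfaces interpolating between the two, covering each collar by finitely many such elementary steps. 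A connectedness argument in $\theta$ then yields $u\equiv 0$ on $\bigcup_\theta W_\theta=K$.

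The hard part will be the behaviour near the tips $\{|t|=T\}\times\overline{\Src}$ of the cone, where any hypersurface following $\partial K$ degenerates towards the characteristic (light) cone and the time-interval on which $Pu=0$ shrinks to zero length; it is precisely there that the $t$-independence of $P$, through Tataru's theorem, is indispensable and is what produces the \emph{sharp} domain $C(T,\Src)$ rather than a strictly smaller one. The remaining ingredients — smoothing the distance function, checking that the interpolating hypersurfaces are non-characteristic, and the finite covering of each collar — are routine.
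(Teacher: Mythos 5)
The paper does not actually prove this theorem: its entire ``proof'' is the citation ``See \cite{Katchalov2001} for a local result and \cite{Tataru} for the global result.'' Your proposal is therefore already more explicit than the paper, and the strategy you describe --- the local unique continuation principle of Tataru--Robbiano--Zuily--H\"ormander across non-characteristic hypersurfaces, valid because the coefficients of $\p_t^2-\Delta_g$ are independent of (hence analytic in) $t$, globalized by sweeping a family of non-characteristic hypersurfaces through the double cone --- is exactly the argument carried out in those references. Your statement of the local principle (unique continuation across any $C^2$ surface with conormal satisfying $\nu_t^2\neq|\nu_x|_g^2$) and your identification of why partial analyticity in $t$ is indispensable (the lateral boundary of the cone is timelike, where pseudoconvexity fails) are both correct.

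As a self-contained proof, however, your text has a genuine gap, and you locate it yourself: all of the substance lies in the actual construction of the family $\{W_\theta\}$ and in the degeneration at the tips $\{|t|=T\}\times\overline{\Src}$, where every admissible hypersurface tends to a characteristic one and the local result only yields neighbourhoods of uncontrolled, shrinking size. That is precisely where the \emph{sharp} domain $C(T,\Src)$ --- rather than a cone of strictly smaller slope --- is won; in the literature it is handled by an explicit choice of sweeping surfaces (hyperboloid-type level sets adapted to $d_g(\cdot,\Src)$, smoothed where the distance function is not differentiable) together with a compactness/connectedness argument showing that the swept regions exhaust the open cone. Calling the remaining ingredients ``routine'' understates this: the passage from ``unique continuation holds across non-characteristic surfaces'' to ``the zero set fills the optimal double cone'' is the nontrivial global content of the theorem. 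If your intention is, like the paper's, to quote the global result from \cite{Tataru} and \cite{Katchalov2001}, your writeup is a fair attribution with the right ingredients named; if the intention is to prove it, the sweeping construction and the tip analysis must be supplied.
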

\begin{proof}
See \cite{Katchalov2001} for a local result and \cite{Tataru} for the global result.

\end{proof}
%
We use a short hand notation
$$
\mathcal{F}_{\Src,T}:=\{f\in C^{\infty}_0(\R \times N): \supp f \subset (0,T) \times \Src\}.
$$
The Tataru's unique continuation result yields immediately the following controllability results.
\begin{theorem}[Approximate controllability] 
\label{density}
Let $\Src \subset N$ be open and bounded. For any $T>0$ set 
$$
\mathcal{W}_T:=\{w^f(T): f\in \mathcal{F}_{\Src,T}\}
$$
is dense in Hilbert space $L^2(M(T,\Src))$.
\end{theorem}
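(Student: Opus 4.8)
The plan is to prove the density by the standard duality argument, with Tataru's unique continuation theorem (Theorem~\ref{Unique continuation}) doing the essential work. I would let $\psi\in L^2(M(T,\Src))$, extended by zero to all of $N$, be orthogonal to $\mathcal W_T$, that is $\pair{w^f(T,\cdot),\psi}_{L^2(N)}=0$ for every $f\in\mathcal F_{\Src,T}$, and aim to conclude $\psi=0$. For this I would introduce the adjoint wave $e$, the finite-energy solution of the backward problem
\[
\p_t^2 e-\Delta_g e=0 \ \text{ in }(0,T)\times N,\qquad e|_{t=T}=0,\qquad \p_t e|_{t=T}=\psi .
\]
Since $\Src$ is bounded and $N$ is complete, $M(T,\Src)$ is compact, and finite speed of wave propagation then makes $e(t,\cdot)$ compactly supported for every $t\in[0,T]$.

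The first step would be the pairing identity
\[
\pair{w^f(T,\cdot),\psi}_{L^2(N)}=-\pair{f,e}_{L^2((0,T)\times N)},\qquad f\in C_0^\infty((0,T)\times N).
\]
I would obtain it by writing $\pair{f,e}_{L^2((0,T)\times N)}=\pair{\square_g w^f,e}_{L^2((0,T)\times N)}$ and integrating by parts twice in the time variable and once in space: there are no spatial boundary contributions because $N$ is boundaryless and $e,w^f$ have compact spatial support, and the temporal boundary terms vanish because $w^f|_{t=0}=\p_t w^f|_{t=0}=0$ and $e|_{t=T}=0$. Restricting $f$ to $\mathcal F_{\Src,T}$ and using the orthogonality assumption yields $\pair{f,e}_{L^2((0,T)\times\Src)}=0$ for all $f\in C_0^\infty((0,T)\times\Src)$, hence $e\equiv 0$ on $(0,T)\times\Src$.

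The second step would be unique continuation. I would extend $e$ oddly across $t=T$, setting $\widetilde e(t,x):=-e(2T-t,x)$ for $t\in(T,2T)$; because $e|_{t=T}=0$ this produces a finite-energy solution of $\p_t^2\widetilde e-\Delta_g\widetilde e=0$ on $(0,2T)\times N$, and the odd symmetry makes $\widetilde e$ vanish on all of $(0,2T)\times\Src$. Theorem~\ref{Unique continuation} then forces $\widetilde e\equiv 0$ on the double cone $C(T,\Src)$. As $C(T,\Src)$ is open and contains a neighbourhood of $\{T\}\times\{x:\mathrm{dist}_g(x,\Src)<T\}$, this gives $\p_t\widetilde e|_{t=T}=0$ on $\{x:\mathrm{dist}_g(x,\Src)<T\}$; but $\p_t\widetilde e|_{t=T}=\p_t e|_{t=T}=\psi$, so $\psi=0$ on that open set. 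Since the boundary $\{x:\mathrm{dist}_g(x,\Src)=T\}$ of $M(T,\Src)$ has Lebesgue measure zero, $\psi=0$ in $L^2(M(T,\Src))$, which is equivalent to the asserted density.

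I expect the main obstacle to be the (routine but real) technical point that the pairing identity and Theorem~\ref{Unique continuation} are needed for finite-energy rather than smooth solutions. The pairing identity I would justify by approximating $\psi$ in $L^2(N)$ by smooth compactly supported data and passing to the limit via continuity of the solution operator on energy spaces; for the unique continuation I would mollify $\widetilde e$ in time, apply the smooth statement on slightly smaller cylinders, and let the mollification parameter tend to zero --- alternatively one may invoke that the result of \cite{Tataru} already holds for $H^1_{\mathrm{loc}}$ solutions.
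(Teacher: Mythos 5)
Your proof is correct and follows essentially the same route as the paper: a duality argument with the backward adjoint wave, the integration-by-parts pairing identity, odd reflection across $t=T$ to get a solution vanishing on $(0,2T)\times\Src$, and Tataru's unique continuation on the double cone $C(T,\Src)$. If anything you are slightly more careful than the paper on two minor points --- the measure-zero discrepancy between $M(T,\Src)$ and the $t=T$ slice of $C(T,\Src)$, and the regularity needed to apply the pairing identity and unique continuation to finite-energy solutions.
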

\begin{proof}
By the finite speed of wave propagation $\mathcal{W}_T \subset L^2(M(T,\Src))$. Since $L^2(M(T,\Src))$ is a Hilbert space, it suffices to prove that $\mathcal{W}_T^{\perp}=\{0\}$. Suppose that $\phi \in L^2(M(T,\Src))$ is such that $(w^f(T),\phi)_{L^2(N)}=0$ for all $f \in \mathcal{F}_{\Src,T}$. Let $u \in C^{\infty}(\R \times N)$ solve
\begin{equation}
\label{wave equ 2}
\left\lbrace \begin{array}{l}
(\p^2_t-\Delta_g)u=0, \: \textrm{ in } (0,T) \times N
\\
u|_{t=T}=0, \;\p_tu|_{t=T}=\phi.
\end{array} \right.
\end{equation}
Let $f \in \mathcal{F}_{B,T}$. By the finite speed of wave propagation, there exists a compact set of $N$ that contains the $\supp w^f(t)$ for each $t \in (0,T)$.  We use the Green identities to see that
$$
\langle f,u\rangle _{L^2((0,T) \times N)}=\langle \square_g w^f,u\rangle _{L^2((0,T) \times N)}-\langle w^f,\square_g u\rangle _{L^2((0,T) \times N)}=0.
$$
Since $\mathcal{F}_{\Src,T}$ is dense in $L^2((0,T)\times \Src)$, it holds that $u \equiv 0$ in $(0,T]\times \Src$. 

Let $U$ solve
\begin{equation}
\label{wave equ 3}
\left\lbrace \begin{array}{l}
(\p^2_t-\Delta_g)U=0, \: \textrm{ in } (0,2T) \times N
\\
U|_{t=0}=u(0), \;\p_tU|_{t=0}=\p_t u|_{t=0}.
\end{array} \right.
\end{equation}
By equations \eqref{wave equ 2} and \eqref{wave equ 3} it holds $U|_{[0,T]\times N}=u$. More over the function $\widetilde{u}(t,x)=-u(2T-t,x)$ solves the wave equation 
\begin{equation}
\label{wave equ 4}
\left\lbrace \begin{array}{l}
(\p^2_t-\Delta_g)\widetilde{u}=0, \: \textrm{ in } (T,2T) \times N
\\
\widetilde{u}|_{t=T}=0, \;\p_t\widetilde{u}|_{t=T}=\phi,
\end{array} \right.
\end{equation}
since $\widetilde{u}(T,x)=-u(2T-T,x)=0 \textrm{ and } \p_t\widetilde{u}|_{t=T}=\p_tu(2T-T)=\phi$.
Therefore in particular $U|_{(0,2T)\times \Src}\equiv 0$.

By unique continuation (Theorem \ref{Unique continuation}), it holds that $U|_{C(T,\Src)}\equiv 0$. Since $M(T,\Src)\times \{T\}\subset C(T,\Src)$ we have 
$$\phi|_{M(T,\Src)}=\p_t U|_{t=T}|_{M(T,\Src)} =0.$$ 
\end{proof}

Next our aim is to prove the Blagovestchenskii identity on a complete Riemannian manifold $(N,g)$. This identity was originally introduced in \cite{blagoveshchenskii1971inverse, blagovestchenskii1969one} for a Riemannian manifold with boundary.
\begin{theorem}
\label{th_Blago}
Let $(N,g)$ be a complete Riemannian manifold. Let $T>0$, $\Src \subset N$ be open and bounded.  
Let $f,h \in \mathcal{F}_{\Src,2T}$, then 
\begin{equation}
\label{Blagovestchenskii identity}
\langle w^f(T, \cdot),w^h(T, \cdot)\rangle_{L^2(N)}=\langle f,(J\Lambda_{\Src} -\Lambda_{\Src}^\ast J)h\rangle_{L^2((0,T)\times N)}
\end{equation}
where the operator $J:L^2(0,2T)\rightarrow L^2(0,T)$ is defined as
$$
J\phi(t)=\frac{1}{2}\ \int_{t}^{2T-t}\phi(s) \;ds.
$$
\end{theorem}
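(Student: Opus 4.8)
The plan is to run the classical Blagovestchenskii argument: reduce the spatial pairing $\langle w^f(T,\cdot), w^h(T,\cdot)\rangle_{L^2(N)}$ to a $1{+}1$-dimensional wave equation in the two time variables and solve it by d'Alembert's formula.

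First I would set $G(t,s)=\langle w^f(t,\cdot),w^h(s,\cdot)\rangle_{L^2(N)}$ for $t,s\in[0,2T]$. Since $f,h\in\mathcal F_{\Src,2T}$ are compactly supported and $\Src$ is bounded, the finite speed of propagation (Theorem~\ref{[Finite speed of propagation cone version}) keeps $\mathrm{supp}\,w^f(t,\cdot)$ and $\mathrm{supp}\,w^h(s,\cdot)$ inside a fixed compact set for all $t,s\in[0,2T]$; together with smoothness of $w^f,w^h$ this makes $G$ a smooth function and justifies differentiating under the integral sign and integrating by parts in the step below. Differentiating, using $\partial_t^2 w^f=\Delta_g w^f+f$, $\partial_s^2 w^h=\Delta_g w^h+h$, and the symmetry of $\Delta_g$ on compactly supported smooth functions (Green's identity, no boundary terms), the Laplacian terms cancel and one obtains
$$(\partial_t^2-\partial_s^2)G(t,s)=\langle f(t,\cdot),w^h(s,\cdot)\rangle_{L^2(N)}-\langle w^f(t,\cdot),h(s,\cdot)\rangle_{L^2(N)}=:F(t,s),$$
with Cauchy data $G(0,s)=0$ and $\partial_t G(0,s)=0$ for all $s$, because $w^f$ vanishes to first order at $t=0$.

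Next I would solve this inhomogeneous one-dimensional wave equation (with $s$ playing the role of the spatial variable and $t$ the time variable) by Duhamel's principle and d'Alembert's formula,
$$G(t,s)=\frac12\int_0^t\int_{s-(t-\tau)}^{s+(t-\tau)}F(\tau,\sigma)\,d\sigma\,d\tau,$$
and evaluate at $t=s=T$ to get $\langle w^f(T,\cdot),w^h(T,\cdot)\rangle_{L^2(N)}=\tfrac12\int_0^T\int_{\tau}^{2T-\tau}F(\tau,\sigma)\,d\sigma\,d\tau$, where the inner $\sigma$-integral is exactly the operator $J$ applied in the time variable. Finally I would substitute the definition of $F$ and use Fubini together with the fact that $f(\tau,\cdot)$ and $h(\sigma,\cdot)$ are supported in $\Src$, so that $w^h|_{\Src}=\Lambda_{\Src}h$ and $w^f|_{\Src}=\Lambda_{\Src}f$: the first term becomes $\langle f,J\Lambda_{\Src}h\rangle_{L^2((0,T)\times N)}$ and the second becomes $-\langle\Lambda_{\Src}f,Jh\rangle_{L^2((0,T)\times N)}$, which I rewrite as $-\langle f,\Lambda_{\Src}^\ast Jh\rangle$ using the characterization of the adjoint of the source-to-solution map (Lemma~\ref{Adjoint of solution mapping} in the appendix). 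This yields \eqref{Blagovestchenskii identity}.

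The argument is essentially routine, so the only delicate points are bookkeeping. One is justifying smoothness of $G$ and the absence of boundary terms in Green's identity, which rests entirely on invoking finite speed of propagation to keep every wave solution uniformly compactly supported on $[0,2T]$. The other, which I expect to be the likeliest place for an error, is getting the limits of integration in d'Alembert's formula exactly right, so that setting $t=s=T$ produces precisely the window $(\tau,2T-\tau)$ matching $J$, and so that the second term is correctly identified as $\Lambda_{\Src}^\ast J h$ with the correct time orientation rather than its time reverse.
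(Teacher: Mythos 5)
Your proposal is correct and follows essentially the same route as the paper's proof: define $W(t,s)=\langle w^f(t),w^h(s)\rangle_{L^2(N)}$, derive the $(1{+}1)$-dimensional wave equation $(\partial_t^2-\partial_s^2)W=F$ with vanishing Cauchy data via Green's identity (no boundary terms by finite speed of propagation), solve by d'Alembert/Duhamel, and evaluate at $(T,T)$ so the inner integral produces $J$. The limits of integration and the identification of the second term as $-\langle f,\Lambda_{\Src}^\ast Jh\rangle$ both check out.
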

\begin{proof}

Let $f,h \in \mathcal{F}_{B,2T}$ and consider the mapping $W:[0,2T]\times [0,2T]\rightarrow \R$,
$$
W(t,s)=\langle w^f(t),w^h(s)\rangle_{L^2(N)}.
$$
Then using Greens formula 
$$
(\p^2_t-\p^2_s)W(t,s)=(\p^2_t-\p^2_s)\langle w^f(t),w^h(s)\rangle_{L^2(N)}
$$ 
$$
=\langle f(t),\Lambda_{B,2T}h(s)\rangle_{L^2(N)}-\langle\Lambda_{B,2T}f(t),h(s)\rangle_{L^2(N)}:=F(t,s).
$$
Notice that there is no boundary terms due finite speed of wave propagation. The function  $(t,s)\mapsto F(t,s)$ can be computed, if the local source-to-solution mapping $\Lambda_{\Src}$ is given. By \eqref{eq:problem_RM} it holds that
$$
W(0,s)=0=\p_t W(t,s)|_{t=0}.
$$
Thus $w$ is the solution of the following $(1+1)$-dimensional initial value problem:
\begin{equation}
\label{1D wave equ}
\left\lbrace \begin{array}{l}
(\p^2_t-\p_s^2)W=F, \: \textrm{ in } (0,2T)\times \R
\\
W|_{t=0}=\p_tW|_{t=0}=0.
\end{array} \right.
\end{equation}
Recall that the following formula
\begin{equation}
\label{Solution formula for 1d zero initial inner source wave equ} 
W(t,s)=\frac{1}{2}\int_0^t \int_{s-\tau}^{s+\tau}F(t-\tau,y) \;dyd\tau, \: s\in \R, \: t\in [0,2T],
\end{equation}
solves \eqref{1D wave equ} (see e.q. \cite{Evans}). 
By the change of variables $T-s = \tau$, we conclude
$$
W(T,T)=\frac{1}{2}\int_0^T \int_{\tau}^{2T-\tau}F(\tau,y) \;dyd\tau.
$$
$$
=\langle f,J\Lambda_{\Src} h\rangle_{L^2(\Src \times (0,T))}-\langle\Lambda_{\Src} f,J h\rangle_{L^2(\Src\times (0,T))}.
$$
\end{proof}
\begin{lemma}
\label{Adjoint of solution mapping}
The adjoint mapping of $\Lambda_{\Src}$ in $L^2((0,T)\times \Src)$ is $ R \Lambda_{\Src} R$,
where 
$$
R h(t,x)=h(T-t,x).
$$
\end{lemma}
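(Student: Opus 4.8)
The plan is to reduce the identity $\Lambda_\Src^\ast = R\Lambda_\Src R$ to a spacetime Green's formula. First I would observe that $R$, defined by $Rh(t,x) = h(T-t,x)$, is an isometric involution of $L^2((0,T)\times\Src)$ (so $R = R^\ast = R^{-1}$); hence it suffices to verify $\langle \Lambda_\Src f, h\rangle_{L^2((0,T)\times\Src)} = \langle f, R\Lambda_\Src R h\rangle_{L^2((0,T)\times\Src)}$ for $f, h$ in the dense subspace $C_0^\infty((0,T)\times\Src)$.

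Second, I would make $R\Lambda_\Src R$ explicit. Let $p = w^{Rh}$ solve \eqref{eq:problem_RM} with source $Rh$, and put $q(t,x) := p(T-t,x)$. Since the spatial operator is unaffected by the substitution $t \mapsto T-t$, the chain rule gives $(\p_t^2 - \Delta_g)q(t,x) = \big((\p_t^2 - \Delta_g)p\big)(T-t,x) = (Rh)(T-t,x) = h(t,x)$, while $q(T,x) = p(0,x) = 0$ and $\p_t q(T,x) = -\p_t p(0,x) = 0$. Thus $q$ solves the \emph{final}-value problem $\p_t^2 q - \Delta_g q = h$ on $(0,T)\times N$ with vanishing Cauchy data at $t=T$, and by construction $R\Lambda_\Src R h = q|_{(0,T)\times\Src}$. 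Because $f$ and $h$ are supported in $(0,T)\times\Src$, the claim then collapses to the single identity $\langle w^f, h\rangle_{L^2((0,T)\times N)} = \langle f, q\rangle_{L^2((0,T)\times N)}$, where $w^f$ solves \eqref{eq:problem_RM}.

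Third, I would prove this by integrating $\langle w^f, (\p_t^2 - \Delta_g)q\rangle$ by parts over $(0,T)\times N$. By the finite speed of wave propagation (Theorem \ref{[Finite speed of propagation cone version}), together with the Hopf--Rinow theorem, $w^f(t,\cdot)$ and $q(t,\cdot)$ have compact support for each $t$, so Green's identity on the boundaryless manifold $N$ removes every spatial boundary term and yields $\langle w^f, \Delta_g q\rangle_{L^2(N)} = \langle \Delta_g w^f, q\rangle_{L^2(N)}$. Two integrations by parts in $t$ produce boundary terms only at $t = 0$ and $t = T$; those at $t=0$ vanish because $w^f|_{t=0} = \p_t w^f|_{t=0} = 0$, and those at $t=T$ vanish because $q|_{t=T} = \p_t q|_{t=T} = 0$. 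What is left is $\langle w^f, h\rangle = \langle w^f, (\p_t^2 - \Delta_g)q\rangle = \langle (\p_t^2 - \Delta_g) w^f, q\rangle = \langle f, q\rangle$, which finishes the argument.

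The argument is short and essentially mechanical; the two places that deserve care are (i) getting the time-reversal bookkeeping right, i.e. that $R\Lambda_\Src R$ really is the final-value solution operator with reflection centre $T/2$ (a misplaced sign here would flip the conclusion), and (ii) justifying the boundary-free integration by parts when $\Src$ and $N$ need not be bounded --- which is precisely where finite speed of propagation (hence the compactness of the time slices of $w^f$ and $q$) is used. I expect (ii) to be the only genuine, if minor, obstacle.
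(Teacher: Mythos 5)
Your proof is correct and follows essentially the same route as the paper: both identify the adjoint with the final-value (time-reversed) solution operator via Green's formula on $(0,T)\times N$ (with finite speed of propagation killing the spatial boundary terms and the zero Cauchy data at $t=0$ and $t=T$ killing the temporal ones), and both use the substitution $t\mapsto T-t$ to recognize that operator as $R\Lambda_{\Src}R$. You merely perform the two steps in the opposite order from the paper, which first computes $\Lambda_{\Src}^\ast h$ as the restriction of the backward solution and then applies the time reversal.
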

\begin{proof}
Let $f,h\in \mathcal{F}_{\Src,T}$ and consider the wave equations
\begin{equation}
\label{eq_wave_pair}
\left\lbrace \begin{array}{l}
(\p^2_t-\Delta_g)w=f, \: \textrm{ in } (0,T) \times N
\\
w|_{t=0}=\p_tw|_{t=0}=0
\end{array} \right. \textrm{ and } \left\lbrace \begin{array}{l}
(\p^2_t-\Delta_g)u=h, \: \textrm{ in } (0,T)\times N
\\
u|_{t=T}=\p_tu|_{t=T}=0.
\end{array} \right.
\end{equation}
We start with observing that
$$
\langle f,u\rangle _{L^2((0,T)\times N)}-\langle w, h\rangle_{L^2((0,T)\times N)}=0.
$$
This holds due the computations we have done in the proof of Theorem \ref{density}.
Therefore
$$
\langle f,u \rangle _{L^2((0,T)\times \Src)}-\langle \Lambda_{\Src}f, h\rangle_{L^2((0,T)\times \Src))}=0 \hbox{ and } \Lambda^{\ast}_{\Src}h=u|_{(0,T)\times \Src} .
$$
Replace $f=Rh$. Then 
$$
\square Ru=\square u(T-\cdot,\cdot)=h(T-\cdot,\cdot)=Rh \textrm{ and } Ru(0,\cdot)=\partial_tRu(0,\cdot)=0.
$$
By \eqref{eq_wave_pair} it holds that 
$$Ru|_{(0,T)\times \Src}=w|_{(0,T)\times \Src}=\Lambda_{\Src}f=\Lambda_{\Src}Rh.$$
Since $R\circ R=id_{L^2((0,T)\times \Src)}$ we get $u|_{(0,T)\times \Src}=R\Lambda_{\Src}Rh.$
\end{proof}
\nocite{uhlmann1998inverse,krupchyk2008inverse,de2014reconstruction,lassas2014inverse}

\bibliographystyle{abbrv}

\bibliography{bibliography}

\ifdraft{
\listoftodos
}{}
\end{document}